\newtheorem{thm}{Theorem}
\newtheorem{prop}[thm]{Proposition}
\newtheorem{defin}[thm]{Definition}
\newtheorem{rmk}[thm]{Remark}
\newtheorem{cor}[thm]{Corollary}
\newtheorem{exa}[thm]{Example}
\newtheorem{notn}[thm]{Notation}
\newtheorem{warn}[thm]{\emph{WARNING}}
\newcommand{\lcm}{\text{lcm}}
\title{Automorphic vector-forms using the Cohn-Elkies magic functions}
\author{Michael Andrew Henry [TU Graz]}
\date{\today}
\begin{document}

\maketitle

\begin{abstract}
    In this study, we introduce the theory of what we call Hecke vector-forms. A Hecke vector-form can be viewed as a vector function representation of some quasiautomorphic form that transforms like an automorphic form on an arbitrarily chosen Hecke triangle group. In other words, because quasiautomorphic forms have complicated transformation behavior when compared with automorphic forms, the construction of a Hecke vector-form is to retrieve a transformation behavior analogous to the simpler, automorphic case. In this way, a Hecke vector-form can be viewed as the vector function analogue of an automorphic form. Since our work is for any quasiautomorphic form over an arbitrary Hecke triangle group, we briefly review the construction of such groups. Furthermore, we review the derivation of the hauptmodul, the automorphic forms, and the normalized quasiautomorphic form of weight $2$ for any Hecke triangle group. We then proceed to the theory of Hecke vector-forms and establish the desired transformation behavior with respect to the generators of the associated group. A proof of this fact is strictly elementary, relying on fine properties of binomial coefficients. Lastly, we relate the vector-forms to Hecke automorphic linear differential equations, which are analogues of the commonly researched modular linear differential equations. Our results include Hecke vector-forms of the classical quasimodular forms as the simplest case. 
    
    \let \thefootnote\relax\footnote{The author would like to thank C. Aistleitner, P. Grabner, H. Movasati, and W. Zudilin for valuable feedback that greatly improved this work. This research has been supported by the Austrian Science Fund (FWF), grant number W1230.}
\end{abstract}

{Keywords: Automorphic form; Quasiautomorphic form; Automorphic linear differential equations; Hecke triangle group; Projective geometry; Modular form; Quasimodular form; Modular linear differential equation; Darboux-Halphen system; Riccati equation.}

\section{Introduction}

Modular forms are holomorphic functions defined on the upper half plane $ \mathbb{H} $ with an especially elegant transformation behavior for elements of the modular group
    \begin{eqnarray*}
        \mathbf{PSL}_2 (\mathbb{Z}) := \left\{ \frac{az+b}{cz+d}: a,b,c,d \in \mathbb{Z} \text{ and } ad-bc = 1 \right \}.
    \end{eqnarray*}
Functions having this transformation behavior on other subgroups of $ \mathbf{PSL}_2 (\mathbb{R}) $ are called automorphic forms. Here forward, discussion of automorphic forms will include the modular forms--after all, the modular group is indeed the ``first'' Hecke triangle group. 

Automorphic forms are an old study; \emph{quasi}automorphic forms, however, were defined much more recently (in 1995, see \cite{kanekoZagier1995}). The span of time between the introduction of automorphic forms and quasiautomorphic forms might give the impression that these latter functions are only of special interest. It is true, for example, that a quasiautomorphic form does not exhibit the elegant transformation property that is so characteristic of an automorphic form. This makes quasiautomorphic forms more cumbersome in practice, the complication increasing as the depth of the function increases. Nonetheless, it is seen in the breakthrough result \cite{viazovska2017} (similarly, see \cite{cohnEtAl2017} and \cite{feigenbaumEtAl2021}) that the transformation behavior of a quasiautomorphic form is of great power in itself and so here we accept this behavior as basic. That is, quasiautomorphic forms are the Cohn-Elkies ``magic'' functions of the mathematical folklore around sphere packing problems \cite{cohnElkies2003}.

The aim of this article is to introduce what we call the \emph{Hecke vector-forms} (see definition \ref{def:HeckeVectorForm}), which are special vector valued functions over the Hecke triangle groups. In contrast to studies such as \cite{bantayGannon2007} and \cite{francMason2016} that introduce automorphic forms as vector functions more abstractly, we aimed here to begin with quasiautomorphic forms modulo the Hecke triangle groups as the vector components. Since, say, the classical modular group is a Hecke triangle group, these vector functions are in a sense already familiar. We have supplied much detail to provide a cohesive formalism both for working with such functions and also setting the groundwork for a uniform future study. In this spirit, we also prove some results about Hecke vector-forms that have a more or less fundamental feel. These results include a ``corrected'' transformation formula (see Theorem \ref{fundamentalHeckeVector-form}). We also show how vector-forms are deeply connected to a class of automorphic linear differential equations (see Theorem \ref{thm:fundSolutions}). 

Our project was not completed in the greatest generality possible (with respect to what is known about quasiautomorphic forms on Schwarz triangle groups), but instead we chose to highlight the very close formal relationship between automorphic forms on the Hecke triangle groups, generally, and the simplest, most familiar instance, the modular forms on the full modular group. Being more precise, we show that with respect to the formalism we introduce, moving between different Hecke triangle groups is simply a matter of accounting for two parameter values: in our notation, these real parameters are $ C_{\mathfrak{t}_\mu} $ and $ \varpi_\mu $, a structure constant and a shift, respectively. In this way, our study bears heavily on the classical theory of modular forms. As such, we hope the developments will be useful to the usual practitioners from number theory and physics.

One of the (philosophical) beliefs behind this project is that quasiautomorphic forms are just as fundamental as automorphic forms if not the more essential class of objects. Given the priority of automorphic forms in the research literature, going as far back as Riemann and Poincar\'{e}, this claim deserves some justification. Due to space, we will settle for a single, brief explanation by which the reader will at least find something to ponder. We hope to return to this topic again soon, providing more details.

For our argument, we quote a famous theorem by Malmquist from 1913 (see \cite{hille1976} or \cite{steinmetz2017}): 
    \begin{thm}[Malmquist]
        Let $ u(z) $ be defined for $ z \in \mathbb{C} $, $ u'(z) = d \,u(z)/dz$,  and $ R(w) $ be some rational function of $ u $. If $ u(z) $ is a meromorphic function and the ordinary differential equation
            \begin{eqnarray*}
                u' = R(u)
            \end{eqnarray*}
        holds, then $ R $ is a polynomial of degree at most $ 2 $.
    \end{thm}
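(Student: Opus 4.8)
The natural framework for this statement is Nevanlinna value-distribution theory, and the plan is to compare the growth of the two sides of $u' = R(u)$ through the characteristic function $T(r,\cdot)$. Write $R = P/Q$ in lowest terms with $p = \deg P$ and $q = \deg Q$, so that $d := \deg R = \max(p,q)$; the goal is to show $d \le 2$ and $q = 0$. One first disposes of the elementary case in which $u$ is rational: a nonconstant rational function is a surjection of $\widehat{\mathbb{C}}$ onto itself, hence omits no value, which forces $Q$ to be constant (a finite pole of $R$ can never be attained by $u$, as explained below), after which matching the pole orders of $u'$ and $R(u)$ at a pole of $u$ pins down $d \le 2$. Hence one may assume $u$ is transcendental, where $T(r,u) \to \infty$ and the standard error terms $S(r,u) = o(T(r,u))$ (as $r \to \infty$ outside a set of finite measure) are genuinely negligible.

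The degree bound is the clean core of the argument. On the right-hand side, Valiron's theorem on the characteristic of a rational function of a meromorphic function gives $T(r, R(u)) = d\,T(r,u) + O(1)$. On the left-hand side I would estimate $T(r,u')$ from above: writing $u' = u \cdot (u'/u)$ and invoking Nevanlinna's lemma on the logarithmic derivative, $m(r,u') \le m(r,u) + m(r,u'/u) = m(r,u) + S(r,u)$, while a pole of order $k$ of $u$ is a pole of order $k+1$ of $u'$, so that $N(r,u') = N(r,u) + \overline{N}(r,u) \le 2\,N(r,u)$. Adding these yields $T(r,u') \le 2\,T(r,u) + S(r,u)$. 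Since $u' = R(u)$, the two computations combine into $d\,T(r,u) \le 2\,T(r,u) + S(r,u)$, and dividing by $T(r,u) \to \infty$ forces $d \le 2$.

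It remains to exclude a nontrivial denominator, i.e. to show $q = 0$. The key local observation is that every finite pole $a$ of $R$ (a zero of $Q$) must be a value omitted by $u$: were $u(z_0) = a$ at a point where $u$ is holomorphic, the left side $u'(z_0)$ would be finite while the right side $R(a)$ is infinite. Picard's theorem caps the number of such omitted values (at most two for transcendental $u$, and none for nonconstant rational $u$), so $Q$ has very few distinct roots. Combining this with the pole-order matching above — which shows that a pole of $u$ can occur only when $p - q = 2$, whereas $p \le q$ forces $u$ to be entire — and with the ceiling $d \le 2$ leaves only a short list of candidate pairs $(p,q)$ with $q \ge 1$, all of which lie in the entire case. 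I would eliminate each by returning to the equation: writing $u - a = e^{h}$ for the omitted root and substituting into $u' = R(u)$ produces an impossible identity, such as $e^{2h}$ equal to a nonconstant polynomial. Equivalently, the separable relation $dz = Q(u)\,du/P(u)$ integrates to a rational function together with logarithmic terms, and in each surviving case the inverse $z \mapsto u(z)$ acquires genuine branch points, contradicting single-valuedness. Hence $q = 0$ and $R$ is a polynomial of degree at most two, which is exactly the autonomous Riccati form.

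The main obstacle is the analytic input rather than the bookkeeping: everything rests on Nevanlinna's lemma on the logarithmic derivative, which is what legitimizes discarding the term $S(r,u)$ and thereby converts the functional equation into the numerical inequality $d \le 2$. The second delicate point is the elimination of the denominator; although Picard's theorem does much of the work, care is needed because an omitted finite value is a priori only one constraint, and one must use the differential equation itself — not value distribution alone — to rule out the remaining low-degree denominators.
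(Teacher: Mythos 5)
The paper does not prove this theorem at all: it is quoted as a classical result with pointers to Hille and to Steinmetz, so there is no internal proof to compare against. Your Nevanlinna-theoretic argument is precisely the standard Yosida-style proof contained in those references, and as an outline it is sound: Valiron--Mohon'ko gives $T(r,R(u)) = d\,T(r,u)+O(1)$; the lemma on the logarithmic derivative together with $N(r,u')=N(r,u)+\overline{N}(r,u)\le 2N(r,u)$ gives $T(r,u')\le 2T(r,u)+S(r,u)$; dividing by $T(r,u)\to\infty$ yields $d\le 2$; the local pole-order count $k+1=k(p-q)$ forces $k=1$ and $p=q+2$, which combined with $d\le 2$ rules out any denominator as soon as $u$ has a pole; and Picard caps the omitted values in the remaining entire case.

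Two spots deserve tightening. First, in the rational case the claim that a nonconstant rational $u$ ``omits no value'' is only true on $\widehat{\mathbb{C}}$; restricted to $\mathbb{C}$, where the ODE lives, $u$ may omit the single value $u(\infty)$, so $Q$ could a priori have one root $a=u(\infty)$. This is easily repaired by comparing asymptotics at infinity: if $u(z)=a+cz^{-k}(1+o(1))$ then $u'(z)\to 0$ while $R(u(z))\sim Cz^{kq}\to\infty$ for $q\ge 1$. Second, the elimination of the residual entire cases is left vague (``an impossible identity such as $e^{2h}$ equal to a nonconstant polynomial''). The clean finish is to set $g=e^{-h}=1/(u-a)$ and rewrite the equation as $g'=-c^{-1}\bigl(P(a)g^{q+2}+P'(a)g^{q+1}+\tfrac{1}{2}P''(a)g^{q}\bigr)$ with $P(a)\neq 0$ because $P$ and $Q$ are coprime; since $g$ is transcendental entire, this is again an equation of the form $g'=\tilde{R}(g)$ with $\deg \tilde{R}=q+2\ge 3$, contradicting the degree bound you already established. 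With those two repairs the argument is complete.
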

The reader should compare this result with the basic fact that $ u' = u $ holds for the entire function $ u(z) = e^z $. Roughly, we might say, the exponential function reproduces itself under differentiation. Informally, then, we can interpret Malmquist's theorem as saying, for meromorphic functions there is a hard limit to the extent that it can reproduce itself under differentiation, and this hard limit is as a quadratic polynomial. We add that $ 2 $ cannot be made smaller as the classical so-called Riccati equation makes clear. Sometimes Malmquist's theorem is called the Malmquist-Yosida Theorem, as it was Yosida who showed the theorem has a rich connection with Nevanlinna theory. This connection is at present very actively studied (see e.g. \cite{laine1993} or \cite{steinmetz2017}). 

We relate this now to quasiautomorphic forms. Per the recent work of \cite{doranEtAl2013}, we find that a quasiautomorphic form of weight $ w = 2 $ over a general triangle group can be expressed as a meromorphic function $ u $ (pole at $ i\infty $ or holomorphic at $ i \infty $), satisfying
    \begin{eqnarray}\label{extremalRiccati}
        u' = u^2 + b_1u + b_0, 
    \end{eqnarray}
$ b_1 \neq 0 $ and $b_0 \neq 0 $, which is a case of the aforementioned Riccati differential equation. We can conclude that such quasiautomorphic forms offer us examples that satisfy the ``hard limit'' case of Malmquist's celebrated theorem. For this reason, we suggest quasiautomorphic forms have a hitherto unprescribed naturalness and importance.  
\section{Hecke triangle groups and Hecke quasiautomorphic forms}
In this section, we begin with the theory of the Hecke triangle groups and proceed through until we have what we need to introduce the Hecke vector-forms.

\subsection{Elementary theory of Hecke triangle groups}

Of the general Schwarz triangle groups, the initial triangle on $ \mathbb{C}^* $ (compactified complex plane) is given by the angles $\pi/\mu_1, \pi/\mu_2,  $ and $\pi/\mu_3 $ for $ 2 \leq \mu_1 \leq \mu_2 \leq \mu_3 \leq \infty $ where $ \mu_i \in \mathbb{N}\cup\{\infty\} $. The group is then generated by repeated reflections of the initial triangle, forming a tessellation over the upper half plane (i.e. no gaps and no intersections). There are three types of triangle groups and they are distinguished by which equation
    \begin{eqnarray}\label{triangleCondition}
        \frac{1}{\mu_1} + \frac{1}{\mu_2} + \frac{1}{\mu_3} \,\square\, 1,
    \end{eqnarray}
is satisfied, where $ \square  $ means
    \begin{eqnarray*}
         > \quad \text{or}\quad 
         = \quad \text{or}
        \quad <, 
    \end{eqnarray*} 
respectively. We are only concerned with the final case or the so-called triangle groups of the third kind. The other two kinds of triangle groups are never mentioned again. The triangle groups of the third kind are so-called reflection groups or tessellation groups. Sometimes these are also called Fuchsian groups of the first kind, just as is explained in \cite{sansone1969}, p. 440.  

Standard notation for the general triangle group from complex analysis (e.g. as in \cite{nahari1952}) is according to their so-called signature $ \mathfrak{t} = (\mu_1, \mu_2, \mu_3) $, where $\mu_1 $, $ \mu_2$ and $ \mu_3 $ are the numbers indicated by equation (\ref{triangleCondition}). From here forward we always just assume that $ \mu_3 = \infty $. That is, one angle of the triangle being zero, we naturally choose to fix the vertex at infinity. 

To introduce the Hecke triangle groups, we must specialize the Schwarz triangle groups. A Hecke triangle group is of signature 
    \begin{eqnarray*}
        \mathfrak{t}_\mu : = (2, \mu,\infty)
    \end{eqnarray*}    
where $ 3 \leq \mu \in \mathbb{N} $. 

To present the Hecke triangle groups as we would like, we will introduce a new, more encompassing notation, though it is similar in spirit to that used in \cite{berndtKnopp2008}.

    \begin{notn}
        Critical for our presentation is the fact that the Hecke triangle group $ \mathfrak{t}_\mu $ is generated by the maps
            \begin{eqnarray}\label{generators}
                \begin{cases}
                    S_{\mathfrak{t}_\mu}z := -\frac{1}{z}, \\
                    T_{\mathfrak{t}_{\mu}} z := z + 2 \cos \frac{\pi}{\mu}
                \end{cases}
            \end{eqnarray}
        under composition when $ z \in \mathbb{H} $ (e.g. see \cite{beardon1983}). 
    \end{notn}
       
    \begin{notn}
        From the shift in generator $ T_{\mathfrak{t}_\mu} z $, we fix
            \begin{equation}\label{HeckeShiftTerm}
                \varpi_\mu = \varpi : =  2 \cos \frac{\pi}{\mu}
            \end{equation}
        where $ 2 < \mu \in \mathbb{N} $.
    \end{notn} 
Once $ \mu $ is chosen, we want a more convenient normalization of the triangle groups for use with quasiautomorphic forms. Toward this end for a triangle group $(\mu_1, \mu_2, \infty ) $ let the corner vertices $ r_1, r_2, $ and $ r_3 $ of the initial curvilinear triangle satisfy
    \begin{eqnarray}\label{normalizedVertices}
        \begin{cases}
            r_1 := -{e^{-\pi i/\mu_1 }}, \\
            r_2 := -e^ {-\pi i/\mu_2}, \\
            r_3 := \infty;
        \end{cases}
    \end{eqnarray}
this is the classical normalization of $ r_1 $, $ r_2 $ and $ r_3 $ and we have no need to alter it. Our definitions later of quasiautomorphic forms will depend on this normalization by a factor of some root of unity; we have normalized in order to fix the factor simply as $ 1 $.  

\begin{defin}
    Let $ 3 \leq \mu \in \mathbb{N}$. We denote the normalized $ (\mu-2) $-th \emph{Hecke triangle group} by
        \begin{eqnarray*}
            \mathfrak{H}(\varpi_\mu) := (2,\mu,\infty)
        \end{eqnarray*}
    where $ \varpi_\mu $ is given by \eqref{HeckeShiftTerm}, the initial triangle of $ \mathfrak{t}_\mu $ has vertices $ r_1 = i $, $ r_2 = -e^{-\pi i/ \mu} $ and $ r_3 = \infty $.
\end{defin}

To find the so-called fundamental domain of $\mathfrak{t}_\mu $, we reflect the normalized initial triangle $ d $ (from \eqref{normalizedVertices}, etc.) once across the axis of symmetry, denoted $ \alpha $ in Figure \ref{curvTiangle}. The interior if this domain, removing cusps, is the fundamental domain. 
   \begin{figure}[H]
        \centering
        \begin{tikzpicture}[z=-1cm,-,thick, scale=2]
            \coordinate [label=below:\textcolor{black}{$r_1$}]  (A) at (0,1);
            \coordinate [label=below:\textcolor{black}{$r_2 $}]  (B) at (-.5,0.866);
            \coordinate [label=left:\textcolor{black}{$r_3$}]  (C) at (-0.5,2);
            \coordinate [label=below:\textcolor{black}{$0$}]  (D) at (0,0);
            \coordinate [label=below:\textcolor{black}{$-1$}]  (D) at (-1,0);
            \coordinate [label=below:\textcolor{black}{$1$}]  (D) at (1,0);
            \coordinate [label=right:\textcolor{black}{$ \alpha $}]  (G) at (0,1.7);
            \coordinate [label=right:\textcolor{black}{$ d $}]  (G) at (-0.42, 1.5);
            \draw[color=black] (-0.5,.866) -- (-0.5,2.3);
            \draw[color=gray] (0,1) -- (0,2.3) ;
            \draw[color=black] (0.5, 0.866) -- (0.5,2.3);
            \draw[ color=black] (-1.5,0,0) -- (1.5,0,0);
            \draw[] (-1,0) .. controls (-1,0.555) and (-0.555,1) .. (0,1)
            .. controls (0.555,1) and (1,0.555) .. (1,0);
        \end{tikzpicture}
        \caption{Initial triangle $ d $ with vertices $r_1$, $r_2 $, and $r_3$, and axis of symmetry, $ \alpha $}\label{curvTiangle}
    \end{figure}
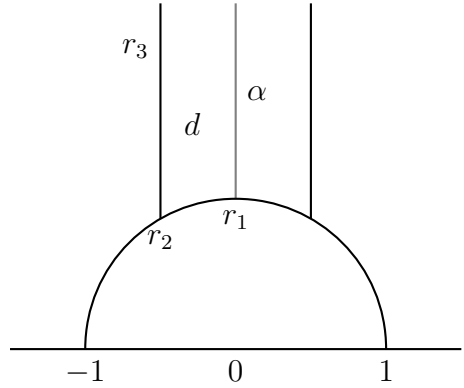
Similarly to how we consider a function like, say, $\sin x $ just over a single period, guaranteeing an inverse, we can restrict the domain of an automorphic form to the fundamental domain. 
    \begin{exa}
        Observe that for $ \mu = 3 $, we have $ \varpi_{\mu} = 2 \cos (\pi/3) = 1 $ or $ T_{\mathfrak{t}_3}z = z+1 $, which is exactly what we should expect for the full modular group, $ \mathfrak{H}(1) $. 
    \end{exa}
In Table \ref{tab:heckeData}, we give a summary of features for the first several Hecke triangle groups.
    \begin{table}[H]
        \centering
        \begin{tabular}{c|c|c|c|c}
            $ \mathfrak{t}_\mu  $ & $ \varpi_{\mu} $ & $ r_1 $ & $ r_2 $ & $ r_3 $  \\
             \hline
            $ (2, 3, \infty) $ & $1$ & $i$ & $ -\frac{1}{2}+ i\frac{\sqrt{3}}{2} $ & $ \infty$ \\
            $ (2, 4, \infty) $ & $ \sqrt{2} $  & $i$ & $ \frac{-\sqrt{2}}{2} + i\frac{\sqrt{2}}{2} $ & $  \infty$ \\
            $ (2 , 5, \infty) $ & $ \frac{1}{2}+\frac{\sqrt{5}}{2} $ & $ i $ & $-\frac{1+\sqrt{5}}{4} + i\sqrt{\frac{5}{8} - \frac{\sqrt{5}}{8} } $ & $ \infty$ \\
            $ (2,6, \infty) $ & $ \sqrt{3} $ & $i$ & $ -\frac{\sqrt{3}}{2} + i\frac{1}{2} $ & $ \infty $  \\
        \end{tabular} 
        \caption{Data for first few initial triangles of $\mathfrak{H}(\varpi_\mu)$}
        \label{tab:heckeData}
    \end{table}
We want to point out that for arbitrary $ 3 \leq \mu \in \mathbb{N} $, the inequality $ 1 \leq \varpi_\mu < 2 $ holds, since $ 2 \cos(\pi/3) = 1 $ and $ 2\cos(\pi/\mu) \rightarrow  2 $ as $\mu \rightarrow \infty. $ The bound on $ \varpi_{} $ matters later for dimension considerations  of the vector space of quasiautmorphic forms defined over $ \mathfrak{H}(\varpi_\mu) $. For all the details see \cite{berndtKnopp2008}, ch. 4.

\subsection{Darboux-Halphen differential systems and the Hecke hauptmodul}

We move toward the general construction of quasiautomorphic forms over an arbitrary Hecke triangle group $\mathfrak{H} (\varpi_\mu) $. We will achieve the needed functions by use of the so-called generalized Darboux-Halphen system. For an alternative development of modular forms see \cite{sebbar2016}.

We assume from this point forward that $ \mathfrak{t}_\mu = \mathfrak{H}(\varpi_\mu) $ (i.e. $\mathfrak{t}_\mu $ is normalized). This particular process of finding quasiautomorphic forms amounts to finding solutions of a special differential system that is very much of interest in its own right. These solutions are necessarily quasiautomorphic forms (as shown in Theorem 4 (i) of \cite{doranEtAl2013}); however, basic operations on the solutions give us the so-called hauptmodul, the so-called automorphic discriminant, and the automorphic forms analogous to Eisenstein series. We will also find explicitly from the differential solutions the unique weight $ 2 $ normalized quasiautomorphic form. This section comes adapted from \cite{doranEtAl2013}, basically with only small changes in notation. 

We start our explanation with a more general (and more symmetric) differential system (see \cite{doranEtAl2013}, Formula (1.2) there), namely
    \begin{defin}
        Consider a Schwarz triangle group given by signature $ \mathfrak{t} = (\mu_1, \mu_2, \infty) $. Let $ y_1(q) $, $ y_2(q) $, and $ y_3(q) $ be complex functions for $ q = q_{\mathfrak{t}_\mu}:= e^{2\pi i z /\varpi_\mu}$ where $ z \in \mathbb{H} $. We define three constants
            \begin{eqnarray}\label{schwarzConstants}
                \begin{cases}
                    a_\mathfrak{t} := \frac{1}{2}\left(1 - \frac{1}{\mu_1} + \frac{1}{\mu_2} \right), \\
                    b_{\mathfrak{t}} := \frac{1}{2}\left(1 - \frac{1}{\mu_1} - \frac{1}{\mu_2} \right), \\
                    c_\mathfrak{t} := \frac{1}{2}\left(1 + \frac{1}{\mu_1} - \frac{1}{\mu_2} \right).
                \end{cases}
            \end{eqnarray}
        If $ \theta f := q \, df(q)/dq = \varpi(2\pi i) ^{-1} {d f(z)}/{dz} $, then the differential system
            \begin{eqnarray}\label{darbouxHalphen}
                &{}& \mathsf{DH}_\mathfrak{t}[y_{\mathfrak{t},1},y_{\mathfrak{t},2},y_{\mathfrak{t},3}] :\\
                &{}&
                    \begin{cases}
                        \theta y_{\mathfrak{t},1} := (a_\mathfrak{t}-1)(y_{\mathfrak{t}, 1}y_{\mathfrak{t},2} + y_{\mathfrak{t},1}y_{\mathfrak{t},3}-y_{\mathfrak{t},2}y_{\mathfrak{t},3}) + (b_{\mathfrak{t}}+c_\mathfrak{t}-1)y_{\mathfrak{t},1}^2, \\
                        \theta y_{\mathfrak{t},2} := (b_\mathfrak{t}-1)(y_{\mathfrak{t},2}y_{\mathfrak{t},1}+ y_{\mathfrak{t},2}y_{\mathfrak{t},3} - y_{\mathfrak{t},1} y_{\mathfrak{t},3}) + (a_\mathfrak{t}+c_\mathfrak{t}-1)y_{\mathfrak{t},2}^2, \\
                        \theta y_{\mathfrak{t},3} := (c_\mathfrak{t}-1)(y_{\mathfrak{t},3}y_{\mathfrak{t},1} + y_{\mathfrak{t},3} y_{\mathfrak{t},2} - y_{\mathfrak{t},1}y_{\mathfrak{t},2}) + (a_\mathfrak{t}+b_\mathfrak{t}-1)y_{\mathfrak{t},3}^2
                    \end{cases}
            \end{eqnarray}
        is a so-called \emph{generalized Darboux-Halphen system}.
    \end{defin}

We next specialize the system at (\ref{schwarzConstants}) and (\ref{darbouxHalphen}) to deal with only the Hecke triangle groups $ {\mathfrak{t}_\mu = (2,\mu,\infty)} $. The change in notation in the proceeding differential system has the obvious meaning. 

    \begin{defin}
        Again define $q = q_{\mathfrak{t}_\mu}:= e^{2 \pi i z /\varpi_\mu} $. Let the Hecke triangle group $\mathfrak{H}_\mu (\varpi)$ be given. If
            \begin{eqnarray}\label{darbouxHalphenConstants}
                \begin{cases}
                    a_{\mathfrak{t}_\mu} := \frac{1}{2}\left(\frac{1}{2} + \frac{1}{\mu} \right) = \frac{\mu + 2}{4 \mu}, \\
                    b_{\mathfrak{t}_\mu} := \frac{1}{2}\left(\frac{1}{2} - \frac{1}{\mu} \right)= \frac{\mu - 2}{4 \mu}, \\
                    c_{\mathfrak{t}_\mu} := \frac{1}{2}\left(\frac{3}{2} - \frac{1}{\mu} \right) = \frac{3\mu - 2}{4 \mu},
                \end{cases}
            \end{eqnarray}
        then the \emph{Hecke Darboux-Halphen system} for $ \mathfrak{H}( \varpi_\mu) $ is 
            \begin{eqnarray}\label{halphenForHeckeGeneral}
                &{}&\mathsf{DH}_{\mathfrak{t}_\mu}[y_{\mathfrak{t}_\mu,1},y_{\mathfrak{t}_\mu,2},y_{\mathfrak{t}_\mu,3}] : \\
                    &{}&
                    \begin{cases}
                        \theta y_{\mathfrak{t}_\mu, 1} = - \frac{3\mu-2}{4\mu}  (y_{\mathfrak{t}_\mu, 1}y_{\mathfrak{t}_\mu,2} + y_{\mathfrak{t}_\mu,1}y_{\mathfrak{t}_\mu,3}-y_{\mathfrak{t}_\mu,2}y_{\mathfrak{t}_\mu,3}) -\frac{1}{\mu}y_{\mathfrak{t}_\mu,1}^2,
                        \\
                        \theta y_{\mathfrak{t}_\mu, 2} = - \frac{3\mu+2}{4\mu} (y_{\mathfrak{t}_\mu,2}y_{\mathfrak{t}_\mu,1}+ y_{\mathfrak{t}_\mu,2}y_{\mathfrak{t}_\mu, 3} - y_{\mathfrak{t}_\mu,1} y_{\mathfrak{t}_\mu,3}),
                        \\
                        \theta y_{\mathfrak{t}_\mu, 3} = - \frac{\mu-2}{4\mu} (y_{\mathfrak{t}_\mu, 3}y_{\mathfrak{t}_\mu,1} + y_{\mathfrak{t}_\mu,3} y_{\mathfrak{t}_\mu, 2} - y_{\mathfrak{t}_\mu,1}y_{\mathfrak{t}_\mu,2}) -\frac{1}{2}y_{\mathfrak{t}_\mu,3}^2 .
                    \end{cases}  
            \end{eqnarray}
    \end{defin}

We now come to the construction of the functions we require on $ \mathfrak{H}(\varpi_\mu) $. To start, let us define a linear combination of differential solutions from $ \mathsf{DH}_{\mathfrak{t}_\mu } $ at (\ref{halphenForHeckeGeneral}), using
    \begin{notn}\label{solutionDifferenceNotation}
        Let $ q = q_{\mathfrak{t}_\mu} := e^{2 \pi i z/\varpi_\mu} $. Let $z \in \mathbb{H}$ and $ y_{\mathfrak{t}_\mu, j }$ for $ 1\leq j \leq 3 $ be solutions of the differential system at \eqref{halphenForHeckeGeneral}. We define
            \begin{eqnarray}
                \begin{cases}
                    \mathsf{u}_{\mathfrak{t}_\mu} := y_{\mathfrak{t}_\mu, 1}(q) - y_{\mathfrak{t}_\mu, 2}(q), \\
                    \mathsf{v}_{\mathfrak{t}_\mu} := y_{\mathfrak{t}_\mu, 3}(q) - y_{\mathfrak{t}_\mu, 2}(q).
                \end{cases}
            \end{eqnarray}    
    \end{notn}

    \begin{defin}[Hecke automorphic function]
         For $ z \in \mathbb{H}$,  $ f(z) $ is a \emph{Hecke automorphic function} on $ \mathfrak{H}(\varpi_\mu) $ if whenever
            \begin{eqnarray*}
                \gamma_{\mathfrak{t}_\mu}(z) = \frac{az+b}{cz+d} \in \mathfrak{H}(\varpi_\mu),
            \end{eqnarray*}
        holds, then
            \begin{eqnarray*}
                f(\gamma_{\mathfrak{t}_\mu}(z)) = f(z).
            \end{eqnarray*}
        holds. 
    \end{defin}

Everything we will need can be described in terms of basic operations on $ \mathsf{u}_{\mathfrak{t}_\mu} $ and $ \mathsf{v}_{\mathfrak{t}_\mu} $ of $
\mathsf{DH}_{\mathfrak{t}_\mu} $. We begin by constructing what is analogous to the classical hauptmodul for the general Hecke triangle group $ \mathfrak{H}(\varpi_\mu) $ that we denote by $ J_{\mathfrak{t}_\mu} $. Naturally, the classical hauptmodul $ J = j/1728 $ for the modular group we would denote as $ J_{\mathfrak{t}_3} $, etc. We have
    \begin{defin}\label{hauptmodulFromHalphen}
        The \emph{Hecke hauptmodul} on $\mathfrak{H}(\varpi_\mu) $ we define by
            \begin{eqnarray*}
                 J_{\mathfrak{t}_\mu} : = \frac{y_{\mathfrak{t}_\mu, 3} - y_{\mathfrak{t}_\mu, 2}}{y_{\mathfrak{t}_\mu, 3} - y_{\mathfrak{t}_\mu, 1}} = \frac{\mathsf{v}_{\mathfrak{t}_\mu}}{\mathsf{v}_{\mathfrak{t}_\mu}- \mathsf{u}_{\mathfrak{t}_\mu}}.  
            \end{eqnarray*}
    \end{defin}
    \begin{thm}
        The Hecke hauptmodul $J_{\mathfrak{t}_\mu}  $ is an automorphic function. 
    \end{thm}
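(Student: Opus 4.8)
The plan is to verify the defining invariance $J_{\mathfrak{t}_\mu}(\gamma_{\mathfrak{t}_\mu}(z)) = J_{\mathfrak{t}_\mu}(z)$ for every $\gamma_{\mathfrak{t}_\mu} \in \mathfrak{H}(\varpi_\mu)$. Because $\mathfrak{H}(\varpi_\mu)$ is generated by $S_{\mathfrak{t}_\mu}$ and $T_{\mathfrak{t}_\mu}$ as in (\ref{generators}), and a relation that holds for the two generators propagates to every word in them by composition, it suffices to treat these two maps. The translation $T_{\mathfrak{t}_\mu} z = z + \varpi_\mu$ is disposed of at once: since $q = q_{\mathfrak{t}_\mu} = e^{2\pi i z/\varpi_\mu}$ obeys $q(z + \varpi_\mu) = e^{2\pi i} q(z) = q(z)$, each solution $y_{\mathfrak{t}_\mu, j}$ is a function of the $T_{\mathfrak{t}_\mu}$-invariant variable $q$ and is therefore itself $T_{\mathfrak{t}_\mu}$-invariant; the same is then true of $\mathsf{u}_{\mathfrak{t}_\mu}$, $\mathsf{v}_{\mathfrak{t}_\mu}$, and hence of $J_{\mathfrak{t}_\mu}$.

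All the substance lies with the inversion $S_{\mathfrak{t}_\mu} z = -1/z$, and here I would exploit that, by Theorem 4(i) of \cite{doranEtAl2013}, each $y_{\mathfrak{t}_\mu, j}$ is a weight-$2$ quasiautomorphic form. Writing $\gamma z = (az+b)/(cz+d)$, this means
\begin{eqnarray*}
    y_{\mathfrak{t}_\mu, j}(\gamma z) = (cz+d)^2 \, y_{\mathfrak{t}_\mu, j}(z) + \lambda_j \, \frac{\varpi_\mu}{2\pi i} c(cz+d), \qquad j = 1,2,3,
\end{eqnarray*}
where $\lambda_j$ is the (constant) depth-one coefficient of $y_{\mathfrak{t}_\mu, j}$. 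If one knows that $\lambda_1 = \lambda_2 = \lambda_3$, then the anomalous terms cancel in every pairwise difference, so that $\mathsf{v}_{\mathfrak{t}_\mu} = y_{\mathfrak{t}_\mu,3} - y_{\mathfrak{t}_\mu,2}$ and $\mathsf{v}_{\mathfrak{t}_\mu} - \mathsf{u}_{\mathfrak{t}_\mu} = y_{\mathfrak{t}_\mu,3} - y_{\mathfrak{t}_\mu,1}$ are genuine weight-$2$ automorphic forms, each transforming by the single factor $(cz+d)^2$. Forming the quotient, that factor divides out and $J_{\mathfrak{t}_\mu}(\gamma z) = J_{\mathfrak{t}_\mu}(z)$ follows; specializing to $\gamma = S_{\mathfrak{t}_\mu}$ (so $c = 1$, $d = 0$) finishes the check on the second generator, and hence on the whole group.

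The one genuine obstacle is therefore the equality $\lambda_1 = \lambda_2 = \lambda_3$ of the three depth-one coefficients — indeed a short cross-multiplication shows that $J_{\mathfrak{t}_\mu}(\gamma z) = J_{\mathfrak{t}_\mu}(z)$ is \emph{equivalent} to $(\lambda_3 - \lambda_2)(y_{\mathfrak{t}_\mu,3} - y_{\mathfrak{t}_\mu,1}) = (\lambda_3 - \lambda_1)(y_{\mathfrak{t}_\mu,3} - y_{\mathfrak{t}_\mu,2})$, which by the linear independence of the differences forces all three to agree. I would establish this coincidence by substituting the quasiautomorphic ansatz into the Hecke Darboux--Halphen system (\ref{halphenForHeckeGeneral}): the operator $\theta = (\varpi_\mu/2\pi i)\, d/dz$ converts the known transformation of each $y_{\mathfrak{t}_\mu,j}$ into that of $\theta y_{\mathfrak{t}_\mu,j}$, and matching the non-automorphic part on both sides of each of the three quadratic equations yields linear constraints on the $\lambda_j$ forcing them to coincide. (Alternatively one reads off the common coefficient from the cusp expansions at $q = 0$.) Once this is in hand, the reason the hauptmodul is packaged as the difference-ratio of Definition \ref{hauptmodulFromHalphen} becomes transparent: it is exactly the combination in which the shared quasiautomorphic anomaly of the $y_{\mathfrak{t}_\mu,j}$ is annihilated, leaving a weight-$0$, and thus automorphic, function.
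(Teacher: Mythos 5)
Your proposal is correct in outline, but be aware that the paper itself supplies no argument for this theorem: its ``proof'' is the single line ``Taken from \cite{doranEtAl2013},'' so the comparison is between your reconstruction and a citation. Your route --- reduce to the two generators, dispose of $T_{\mathfrak{t}_\mu}$ by $q$-periodicity, and handle $S_{\mathfrak{t}_\mu}$ by observing that the three Darboux--Halphen solutions are weight-$2$, depth-$1$ quasiautomorphic forms sharing a common anomalous term, so that the differences $\mathsf{v}_{\mathfrak{t}_\mu}$ and $\mathsf{v}_{\mathfrak{t}_\mu}-\mathsf{u}_{\mathfrak{t}_\mu}$ are honest weight-$2$ automorphic forms whose ratio is weight $0$ --- is exactly the mechanism the paper gestures at informally in its affine-ratio remark after the theorem, and it is consistent with the paper's later definition $E_{\mathfrak{t}_\mu,2k}=(\varpi_\mu/2\pi i)^k\mathsf{u}_{\mathfrak{t}_\mu}\mathsf{v}_{\mathfrak{t}_\mu}^{k-1}$, which tacitly requires $\mathsf{u}_{\mathfrak{t}_\mu}$ and $\mathsf{v}_{\mathfrak{t}_\mu}$ to be weight-$2$ automorphic forms. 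What your approach buys is transparency: it explains \emph{why} the hauptmodul is packaged as an affine-ratio, namely that this is the combination annihilating the shared quasiautomorphic anomaly. The one step you flag but do not execute --- the equality $\lambda_1=\lambda_2=\lambda_3$ of the depth-one coefficients --- is genuinely the crux; your observation that $J_{\mathfrak{t}_\mu}(\gamma z)=J_{\mathfrak{t}_\mu}(z)$ is in fact \emph{equivalent} to this equality shows you cannot avoid it. Both of your suggested routes (matching anomalous parts across the system \eqref{halphenForHeckeGeneral}, or comparing cusp expansions) are viable, and in \cite{doranEtAl2013} the needed fact is precisely the assertion that the pairwise differences of the $y_{\mathfrak{t}_\mu,j}$ are automorphic forms, so quoting their Theorem 4 in full rather than only part (i) would close the gap. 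As written, your argument is a correct plan with that single lemma left as a sketch rather than a proof.
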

    \begin{proof}
        Taken from \cite{doranEtAl2013}.
    \end{proof}

Next, using the appropriate M\"{o}bius map, we can always ensure that the conditions 
    \begin{eqnarray*}
        \begin{cases}
        J_{\mathfrak{t}_\mu} (r_1) = 1, \\
        J_{\mathfrak{t}_\mu} (r_2) = 0,  \\
        J_{\mathfrak{t}_\mu} (r_3) = \infty
        \end{cases}
    \end{eqnarray*}
are satisfied.  This is completely in accord with the familiar, classical situation where $ J_{\mathfrak{t}_3}\left(i\right) = 1 $, $ J_{\mathfrak{t}_3}\left(\frac{1+i\sqrt{3}}{2}\right) = 0 $, and $  J_{\mathfrak{t}_3}(i\infty) = \infty $. Looking at Figure \ref{curvTiangle}, the initial triangle $ d $ is mapped conformally by $ J_{\mathfrak{t}_\mu}(z) $ to $\mathbb{H}$ except at the cusps where it is not conformal, and the reflection of $ d $ over the axis $ \alpha $ is mapped by $ J_{\mathfrak{t}_\mu}(z) $ to the mirror image of $ \mathbb{H} $ (i.e. the lower half plane). For more details, the reader can see \cite{apostol1976II}.

Briefly, as it was not mentioned in \cite{doranEtAl2013}, observe that directly from Definition \ref{hauptmodulFromHalphen}, we can determine that, if $ \gamma_{\mathfrak{t}_\mu}(z) \in \mathfrak{H}(\varpi_\mu) $, then 
    \begin{eqnarray*}
        \frac{\gamma_{\mathfrak{t}_\mu}(y_{\mathfrak{t}_\mu, 3}) - \gamma_{\mathfrak{t}_\mu}(y_{\mathfrak{t}_\mu, 2}) }{\gamma_{\mathfrak{t}_\mu}(y_{\mathfrak{t}_\mu, 3}) - \gamma_{\mathfrak{t}_\mu}(y_{\mathfrak{t}_\mu, 1})} = \frac{y_{\mathfrak{t}_\mu, 3} - y_{\mathfrak{t}_\mu, 2}}{y_{\mathfrak{t}_\mu, 3} - y_{\mathfrak{t}_\mu, 1}} 
    \end{eqnarray*}
holds. The affine-ratio (\cite{busemannKelly1953}, p. 30; \cite{casas-alvero2014}, p. 57) from the theory of projective geometry, as appears in Definition \ref{hauptmodulFromHalphen}, is invariant under M\"{o}bius transformations. The affine-ratio is a specialization of the fundamental invariant of projective geometry, the cross-ratio, by taking one of the four parameters as $ \infty $. We quickly review the relevant concepts from projective geometry.
    \begin{defin}
        Let $ z_1 $,$ z_2 $,$ z_3 $ and $z_4$ be any four points on the compactified complex plane $ \mathbb{C} $. Then we define the \emph{cross-ratio} as 
            \begin{eqnarray*}
                [z_1, z_2 , z_3 , z_4] : = \frac{(z_1-z_3)(z_2-z_4)}{(z_1-z_2)(z_3 - z_4)}.
            \end{eqnarray*}
    \end{defin}
Note that the cross-ratio goes by many names, such as anharmonic-ratio, double-ratio, etc.  If $ z_4 \rightarrow \infty $, then by using the cross-ratio and continuity, it follows that we can sensibly write
    \begin{eqnarray*}
        [z_1, z_2 , z_3 , \infty] = \frac{ z_1-z_3 }{ z_1-z_2}.
    \end{eqnarray*}
holds. We codify this in
    \begin{defin}\label{affine-ratio}
        The so-called \emph{affine-ratio} is defined by
            \begin{eqnarray}
                [z_1, z_2, z_3] := [z_1, z_2 , z_3 , \infty] 
            \end{eqnarray}
    \end{defin}
        
This completes our introduction of the Hecke hauptmodul and the brief review of projective geometry. For a complete reference on the latter subject see \cite{casas-alvero2014}.

\subsection{Darboux-Halphen also gives (quasi)automorphic forms}
In the previous section we used the Darboux-Halphen system to derive the function $J_{\mathfrak{t}_\mu} $, an automorphic function. Next, we want to use the Darboux-Halphen system to derive the automorphic forms. 
    \begin{defin}[Hecke automorphic form]\label{def:automForm}
        Let
            \begin{eqnarray*}
                \gamma_{\mathfrak{t}_\mu}(z) = \frac{az+b}{cz+d} \in \mathfrak{H}(\varpi_\mu) .
            \end{eqnarray*}
        Then if the relation
            \begin{eqnarray*}
                \frac{f\left(\gamma_{\mathfrak{t}_\mu}(z)\right)}{(cz+d)^{2k} } = f(z)
            \end{eqnarray*}
        holds for $ z\in \mathbb{H} $, then we say $ f $ is a \emph{Hecke automorphic form} of weight $ 2k $ on $\mathfrak{H}(\varpi_\mu)$.
    \end{defin}
We aim to produce the special automorphic forms on $ \mathfrak{H}(\varpi_\mu) $ that are the direct analogues of the classical Eisenstein series already familiar from $ \mathfrak{H}(1) $. First we must give

    \begin{defin} 
        For any $ 2 \leq k \in \mathbb{N} $, the \emph{Eisenstein series} of weight $2k$ with respect to $\mathfrak{H}(\varpi_\mu) $ is defined by
            \begin{eqnarray}\label{EisensteinFromHalphen}
                E_{ \mathfrak{t}_\mu, 2k}(z) := \left(\frac{\varpi_\mu }{2 \pi i}\right)^k \mathsf{u}_{\mathfrak{t}_\mu} \mathsf{v}_{\mathfrak{t}_\mu}^{k-1}.
            \end{eqnarray}
    \end{defin}
    \begin{thm} 
        The Eisenstein series $ E_{\mathfrak{t}_\mu, 2k} $ is an automorphic form of weight $ 2k $.
    \end{thm}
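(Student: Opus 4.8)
The plan is to use the fact, recorded in \eqref{generators}, that $\mathfrak{H}(\varpi_\mu)$ is generated by the inversion $S_{\mathfrak{t}_\mu}z = -1/z$ and the shift $T_{\mathfrak{t}_\mu}z = z + \varpi_\mu$. Because the weight-$2k$ automorphy factor $(cz+d)^{2k}$ is a cocycle, it suffices to verify the transformation law of Definition \ref{def:automForm} on these two generators; it then propagates to every $\gamma_{\mathfrak{t}_\mu}\in\mathfrak{H}(\varpi_\mu)$ by composition. The constant prefactor $(\varpi_\mu/2\pi i)^k$ appearing in \eqref{EisensteinFromHalphen} is inert under the group action, so I would carry it along without comment.

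For $T_{\mathfrak{t}_\mu}$ the check is immediate. Here $c=0$ and $d=1$, so the automorphy factor is $1$ and one only needs $E_{\mathfrak{t}_\mu,2k}(z+\varpi_\mu)=E_{\mathfrak{t}_\mu,2k}(z)$. Since $q=e^{2\pi i z/\varpi_\mu}$ is invariant under $z\mapsto z+\varpi_\mu$, the solutions $y_{\mathfrak{t}_\mu,j}(q)$ are invariant, hence so are the differences $\mathsf{u}_{\mathfrak{t}_\mu}$ and $\mathsf{v}_{\mathfrak{t}_\mu}$ of Notation \ref{solutionDifferenceNotation}, and the required periodicity follows at once.

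The substance of the argument, and the step I expect to be the main obstacle, is the behaviour under $S_{\mathfrak{t}_\mu}$. By Theorem 4(i) of \cite{doranEtAl2013} each solution $y_{\mathfrak{t}_\mu,j}$ is a quasiautomorphic form of weight $2$, so under $z\mapsto -1/z$ it satisfies $y_{\mathfrak{t}_\mu,j}(-1/z)=z^2\,y_{\mathfrak{t}_\mu,j}(z)+\kappa_j z$ for a constant $\kappa_j$ recording the quasiautomorphic anomaly (for $S_{\mathfrak{t}_\mu}$ one has $cz+d=z$, so the anomaly is linear in $z$). The key claim is that this anomaly is common to all three solutions, $\kappa_1=\kappa_2=\kappa_3$; this should be forced by the permutation symmetry built into the generalized Darboux-Halphen system \eqref{darbouxHalphen}, under which the three equations, and hence the three solutions and their monodromy data, are interchanged. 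Granting it, the anomalies cancel in the differences, giving $\mathsf{u}_{\mathfrak{t}_\mu}(-1/z)=z^2\,\mathsf{u}_{\mathfrak{t}_\mu}(z)$ and $\mathsf{v}_{\mathfrak{t}_\mu}(-1/z)=z^2\,\mathsf{v}_{\mathfrak{t}_\mu}(z)$, so that $\mathsf{u}_{\mathfrak{t}_\mu}$ and $\mathsf{v}_{\mathfrak{t}_\mu}$ are genuine weight-$2$ automorphic forms. As an internal consistency check I would note that $\mathsf{u}_{\mathfrak{t}_\mu}/\mathsf{v}_{\mathfrak{t}_\mu}=(J_{\mathfrak{t}_\mu}-1)/J_{\mathfrak{t}_\mu}$ is a ratio of automorphic functions, hence weight-$0$ invariant, which already confirms that $\mathsf{u}_{\mathfrak{t}_\mu}$ and $\mathsf{v}_{\mathfrak{t}_\mu}$ transform with one and the same weight.

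Combining the two generator computations would then finish the proof. Under $S_{\mathfrak{t}_\mu}$ the automorphy factor is $z^{2k}$, and
\begin{eqnarray*}
    E_{\mathfrak{t}_\mu,2k}(-1/z)=\left(\frac{\varpi_\mu}{2\pi i}\right)^k \mathsf{u}_{\mathfrak{t}_\mu}(-1/z)\,\mathsf{v}_{\mathfrak{t}_\mu}(-1/z)^{k-1}= z^{2}\cdot z^{2(k-1)}\,E_{\mathfrak{t}_\mu,2k}(z)= z^{2k}\,E_{\mathfrak{t}_\mu,2k}(z),
\end{eqnarray*}
so the weights add as $2+2(k-1)=2k$, exactly as Definition \ref{def:automForm} demands; since $S_{\mathfrak{t}_\mu}$ and $T_{\mathfrak{t}_\mu}$ generate $\mathfrak{H}(\varpi_\mu)$, the law holds on the whole group. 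The only point demanding real care is the equality of the three anomalies $\kappa_j$. If one prefers to avoid appealing to the symmetry of the system, an alternative route is to observe that $\theta J_{\mathfrak{t}_\mu}$ is a weight-$2$ automorphic form (the derivative of the weight-$0$ invariant $J_{\mathfrak{t}_\mu}$) and to express $\mathsf{u}_{\mathfrak{t}_\mu}\mathsf{v}_{\mathfrak{t}_\mu}^{k-1}$ through $J_{\mathfrak{t}_\mu}$ and $\theta J_{\mathfrak{t}_\mu}$ using the Darboux-Halphen relations \eqref{halphenForHeckeGeneral}, which pins the weight down directly.
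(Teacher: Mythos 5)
Your overall strategy is reasonable and, on the two easy points, correct: reducing the transformation law to the generators $S_{\mathfrak{t}_\mu}$ and $T_{\mathfrak{t}_\mu}$ via the cocycle property, and disposing of $T_{\mathfrak{t}_\mu}$ by periodicity of $q$. The final weight count $2+2(k-1)=2k$ is also fine \emph{once} one knows that $\mathsf{u}_{\mathfrak{t}_\mu}$ and $\mathsf{v}_{\mathfrak{t}_\mu}$ each pick up exactly the factor $z^2$ under $S_{\mathfrak{t}_\mu}$. (For calibration: the paper offers no argument of its own here --- its proof is a citation to \cite{doranEtAl2013} --- so the comparison is really with that source, which proceeds along the lines of your closing remark rather than your main argument.)

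The genuine gap is in your central claim that $\kappa_1=\kappa_2=\kappa_3$ ``by the permutation symmetry built into the generalized Darboux-Halphen system.'' The system \eqref{darbouxHalphen} is invariant only under \emph{simultaneous} permutation of $(y_{\mathfrak{t},1},y_{\mathfrak{t},2},y_{\mathfrak{t},3})$ and of the constants $(a_{\mathfrak{t}},b_{\mathfrak{t}},c_{\mathfrak{t}})$; for the Hecke specialization \eqref{darbouxHalphenConstants} these constants are $\frac{\mu+2}{4\mu}$, $\frac{\mu-2}{4\mu}$, $\frac{3\mu-2}{4\mu}$, which are pairwise distinct for every $\mu\geq 3$, so permuting the $y_{\mathfrak{t}_\mu,j}$ alone is \emph{not} a symmetry and the three solutions are not interchangeable (they have visibly different local behaviour at the elliptic points, since $J_{\mathfrak{t}_\mu}-1$ and $J_{\mathfrak{t}_\mu}$ vanish at $r_1$ and $r_2$ to different orders). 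Even granting that each $y_{\mathfrak{t}_\mu,j}$ is quasiautomorphic of weight $2$, the equality of the three anomaly constants is exactly the content you need and it is not established; your consistency check only shows that the ratio $\mathsf{u}_{\mathfrak{t}_\mu}/\mathsf{v}_{\mathfrak{t}_\mu}=(J_{\mathfrak{t}_\mu}-1)/J_{\mathfrak{t}_\mu}$ is invariant, which is compatible with both functions failing to be weight-$2$ forms by a common obstruction. The fix is the route you relegate to a final aside: from Definition \ref{hauptmodulFromHalphen} one has $\mathsf{v}_{\mathfrak{t}_\mu}=J_{\mathfrak{t}_\mu}\,(\mathsf{v}_{\mathfrak{t}_\mu}-\mathsf{u}_{\mathfrak{t}_\mu})$ and $\mathsf{u}_{\mathfrak{t}_\mu}=(J_{\mathfrak{t}_\mu}-1)(\mathsf{v}_{\mathfrak{t}_\mu}-\mathsf{u}_{\mathfrak{t}_\mu})$, and the Darboux-Halphen relations express $\mathsf{v}_{\mathfrak{t}_\mu}-\mathsf{u}_{\mathfrak{t}_\mu}$ as a rational expression in $J_{\mathfrak{t}_\mu}$ times $\theta J_{\mathfrak{t}_\mu}$; since $J_{\mathfrak{t}_\mu}$ has weight $0$ and $\theta J_{\mathfrak{t}_\mu}$ weight $2$ (as a meromorphic form), the weight of $\mathsf{u}_{\mathfrak{t}_\mu}\mathsf{v}_{\mathfrak{t}_\mu}^{k-1}$ is pinned to $2k$ without any appeal to symmetry of the system. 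That derivation, not the symmetry argument, should carry the proof.
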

    \begin{proof}
        Taken from \cite{doranEtAl2013}.
    \end{proof}
    
    \begin{notn}
        The vector space of Hecke automorphic forms of weight $ w = 2k $ for $2 \leq k \in \mathbb{Z} $ over $ \mathfrak{H}(\varpi_\mu) $ we denote by
            \begin{eqnarray*}
               \mathcal{M}_w(\mathfrak{H}(\varpi_\mu)). 
            \end{eqnarray*}
         For a generic function in $ \mathcal{M}_{w}(\mathfrak{H}(\varpi_\mu)) $ we often write $ B_{\mathfrak{t}_\mu, w}(z) $.
    \end{notn}

The following Theorem giving the dimension formula is non-trivial and taken from \cite{berndtKnopp2008}, p. 52. In the source, their Theorem is stated in greater generality than we need due to our normalization of the initial triangle of $ \mathfrak{H}(\varpi_\mu) $. We state their theorem in this more specialized form. 
    \begin{thm}\label{thm:HeckeModularDimension}
        Let $ 3 \leq \mu \in \mathbb{Z} $. Let $ w = 4k $ be the weight of a function in the space $ \mathcal{M}_{w}(\mathfrak{H}(\varpi_\mu)) $ and $ \lfloor x \rfloor $ is the so-called floor function. Then the relation
            \begin{eqnarray*}
                \dim \mathcal{M}_{w}(\mathfrak{H}(\varpi_\mu))  =  \left\lfloor \frac{w(\mu-2)}{4\mu} \right\rfloor + 1
            \end{eqnarray*}
        holds. 
    \end{thm}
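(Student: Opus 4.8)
The plan is to reduce the dimension count to the \emph{valence formula} for $\mathfrak{H}(\varpi_\mu)$ and then to a Riemann--Roch computation on the quotient, which is a sphere. Throughout, write $N := \frac{w(\mu-2)}{4\mu}$, and for a nonzero $f \in \mathcal{M}_w(\mathfrak{H}(\varpi_\mu))$ and a point $p$ let $\nu_p(f)$ denote the order of vanishing of $f$ at $p$, with $\nu_\infty(f)$ measured in the local parameter $q = q_{\mathfrak{t}_\mu}$ at the cusp.

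First I would establish the valence formula by integrating $\frac{1}{2\pi i}\,\frac{f'}{f}$ around the boundary of the fundamental domain (the triangle $d$ of Figure~\ref{curvTiangle} together with its reflection across $\alpha$) and invoking the argument principle. The contributions organize as follows: the horizontal segment near the cusp $r_3 = \infty$ yields $\nu_\infty(f)$; the two vertical sides and the two circular arcs are identified in pairs by the generators $T_{\mathfrak{t}_\mu}$ and $S_{\mathfrak{t}_\mu}$ of \eqref{generators}, and the weight-$w$ transformation law of Definition~\ref{def:automForm} converts the discrepancy on each paired segment into the area term; finally, indenting the contour around the elliptic fixed points $r_1 = i$ (of order $2$) and $r_2$ (of order $\mu$) produces the fractional terms $\tfrac12\,\nu_{r_1}(f)$ and $\tfrac1\mu\,\nu_{r_2}(f)$. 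The hyperbolic area of the fundamental domain is $2\cdot\pi\!\left(\tfrac12-\tfrac1\mu\right) = \pi\,\frac{\mu-2}{\mu}$ by Gauss--Bonnet, so the $\frac{w}{4\pi}\cdot(\text{area})$ bookkeeping gives exactly
\[
  \nu_\infty(f) + \tfrac12\,\nu_{r_1}(f) + \tfrac1\mu\,\nu_{r_2}(f) + \sum_{p}\nu_p(f) = N,
\]
the sum running over the remaining $\mathfrak{H}(\varpi_\mu)$-inequivalent points of $\mathbb{H}$.

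The upper bound is then immediate: every term on the left is non-negative, so $\nu_\infty(f) \leq N$, and since $\nu_\infty(f) \in \mathbb{Z}_{\geq 0}$ we get $\nu_\infty(f) \leq \lfloor N\rfloor$. Hence the linear map sending $f$ to its first $\lfloor N\rfloor + 1$ Fourier coefficients in $q$ is injective, giving $\dim \mathcal{M}_w(\mathfrak{H}(\varpi_\mu)) \leq \lfloor N\rfloor + 1$. For the matching lower bound I would pass to the quotient $X = \mathbb{H}^{*}/\mathfrak{H}(\varpi_\mu)$, which has genus $0$ (it is uniformized by the hauptmodul $J_{\mathfrak{t}_\mu}$), one cusp, and elliptic points of orders $2$ and $\mu$. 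Dividing by a reference form of known divisor --- for instance a suitable monomial in $\mathsf{u}_{\mathfrak{t}_\mu}$, $\mathsf{v}_{\mathfrak{t}_\mu}$, equivalently in the Eisenstein series $E_{\mathfrak{t}_\mu, 2k}$ of \eqref{EisensteinFromHalphen} --- identifies $\mathcal{M}_w(\mathfrak{H}(\varpi_\mu))$ with the global sections of a line bundle of degree $N$ on $X \cong \mathbb{P}^1$, and orbifold Riemann--Roch yields $\dim = \lfloor N\rfloor + 1$. Concretely, one may instead exhibit the independent forms $f_0,\, J_{\mathfrak{t}_\mu}f_0,\,\dots,\,J_{\mathfrak{t}_\mu}^{\lfloor N\rfloor}f_0$, where $f_0$ is chosen to vanish at the cusp to exactly the order needed so that each product stays holomorphic.

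The main obstacle is the lower-bound floor arithmetic at the elliptic points, which is precisely where the classical formula acquires its exceptional case (at weight $\equiv 2 \bmod 12$ for $\mathfrak{H}(1)$). The Riemann--Roch count on $\mathbb{P}^1$ takes the shape $\lfloor w/4\rfloor + \lfloor \tfrac{w(\mu-1)}{2\mu}\rfloor - \tfrac{w}{2} + 1$, in which the order-$2$ and order-$\mu$ points are floored \emph{separately}, so in general this need not equal $\lfloor N\rfloor + 1$. Writing $w = 4k$ makes the order-$2$ contribution $\lfloor w/4\rfloor = k$ exact, with no fractional part lost, and then the required collapse is the identity
\[
  k + \left\lfloor \tfrac{2k(\mu-1)}{\mu}\right\rfloor - 2k + 1 = \left\lfloor \tfrac{k(\mu-2)}{\mu}\right\rfloor + 1,
\]
both sides equalling $k - \lceil \tfrac{2k}{\mu}\rceil + 1$. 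Verifying this collapse is the crux, and it is exactly the hypothesis $4 \mid w$ that removes the exceptional case present for general even weights.
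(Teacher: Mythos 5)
Your argument is sound, but it is worth noting that the paper does not actually prove this statement: its ``proof'' is a citation to Berndt--Knopp, ch.~5, so you are supplying a genuine argument where the paper defers entirely to the literature. Your route is the standard one --- the valence formula obtained by contour integration of $\frac{1}{2\pi i}\frac{f'}{f}$ around the fundamental domain, giving the upper bound $\nu_\infty(f)\le \lfloor N\rfloor$ and hence injectivity of the truncated Fourier-coefficient map, followed by Riemann--Roch on the genus-zero quotient for the lower bound --- and all the bookkeeping checks out: the area $\pi\frac{\mu-2}{\mu}$ is correct, and the dimension formula $1-\frac{w}{2}+\lfloor\frac{w}{4}\rfloor+\lfloor\frac{w(\mu-1)}{2\mu}\rfloor$ does collapse to $\lfloor N\rfloor+1$ precisely because $w=4k$ makes the order-$2$ contribution integral, so the only fractional loss occurs at the single order-$\mu$ point and is the same loss on both sides (both equal $k-\lceil 2k/\mu\rceil+1$). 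You correctly identify this collapse as the crux and as the reason the hypothesis $4\mid w$ excises the exceptional case (e.g.\ $w\equiv 2\bmod 12$ for $\mu=3$, where the stated formula genuinely fails). The only places you leave work undone are routine: the identification of the paired boundary segments and the indentations at the elliptic fixed points in the valence-formula derivation, and, in your alternative ``explicit basis'' route, the existence of the seed form $f_0$ with prescribed cuspidal vanishing (which would be supplied by suitable monomials in $\mathsf{u}_{\mathfrak{t}_\mu}$ and $\mathsf{v}_{\mathfrak{t}_\mu}$, i.e.\ in $E_{\mathfrak{t}_\mu,4}$ and $E_{\mathfrak{t}_\mu,6}$). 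What your approach buys over the paper's is a self-contained, checkable derivation in which the role of the hypothesis $w=4k$ is made transparent rather than absorbed into a reference stated ``in greater generality.''
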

    \begin{proof}
        See \cite{berndtKnopp2008}, ch. 5, p.52. 
    \end{proof}

Having already used the Darboux-Halphen sytem to find $ {J}_{\mathfrak{t}_\mu} $, we also can recover a function analogous to the so-called modular discriminant. 
    \begin{defin}[Hecke automorphic discriminant]\label{Heckediscriminant}
        For even $ \mu $, say $ \mu = 2\mu_0 $, we find that $  \emph{\lcm}(2,\mu) = \mu $, and consequently we define the \emph{Hecke automorphic discriminant} by
            \begin{eqnarray*}
                 \Delta_{\mathfrak{t}_\mu} :=  \left( \frac{ \theta J_{\mathfrak{t}_\mu}}{J_{\mathfrak{t}_\mu}(J_{\mathfrak{t}_\mu}-1)}\right)^{\mu} J _{\mathfrak{t}_\mu} (J_{\mathfrak{t}_\mu}-1)^{\mu_0}  ;
            \end{eqnarray*}
        however, if $ \mu $ is odd, then $ \emph{\lcm}(2,\mu) = 2 \mu $ and hence
            \begin{eqnarray*}
                 \Delta_{\mathfrak{t}_\mu} := \left( \frac{ \theta J_{\mathfrak{t}_\mu}}{J_{\mathfrak{t}_\mu}(J_{\mathfrak{t}_\mu}-1)}\right)^{2\mu} J^2 _{\mathfrak{t}_\mu} (J_{\mathfrak{t}_\mu}-1)^{\mu}.
            \end{eqnarray*}
    \end{defin}

    \begin{thm}
        The Hecke discriminant function $ \Delta_{\mathfrak{t}_{\mu}} $ is a weight $ w = 2 \, \emph{\lcm} (2,\mu) $ automorphic form over $ \mathfrak{H}(\varpi_\mu) $. 
    \end{thm}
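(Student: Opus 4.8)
The plan is to split the statement into a transformation (weight) part and a holomorphy part; only the latter has real content. Abbreviate $J := J_{\mathfrak{t}_\mu}$. First I would note that $\theta J$ is a weight-$2$ (a priori meromorphic) automorphic form: differentiating the identity $J(\gamma_{\mathfrak{t}_\mu}z)=J(z)$ and using $\tfrac{d}{dz}\gamma_{\mathfrak{t}_\mu}(z)=(cz+d)^{-2}$ yields $J'(\gamma_{\mathfrak{t}_\mu}z)=(cz+d)^2J'(z)$, and $\theta J=\tfrac{\varpi_\mu}{2\pi i}J'$ differs from $J'$ only by a nonzero constant. Hence $\theta J/(J(J-1))$ has weight $2$ (numerator weight $2$, the weight-$0$ hauptmodul in the denominator), its $p$-th power has weight $2p$, and the remaining weight-$0$ factors do not alter the weight. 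Taking $p=\mu$ in the even case gives weight $2\mu=2\lcm(2,\mu)$, and $p=2\mu$ in the odd case gives $4\mu=2\lcm(2,\mu)$, which is the asserted weight in each case.

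The substance of the argument is then to show that $\Delta_{\mathfrak{t}_\mu}$ is holomorphic on $\mathbb{H}$ and at the cusp, that is, that the poles of $\theta J/(J(J-1))$ are exactly cancelled. I would work pointwise with local orders of vanishing. Away from the orbits of $r_1,r_2,r_3$ the map $J$ is a local biholomorphism with $J\neq 0,1$ and $\theta J\neq 0$, so every factor is a finite unit and nothing need be checked. The three special orbits require the ramification data of the hauptmodul: since $r_1$ and $r_2$ are elliptic of orders $2$ and $\mu$, one has $J-1\sim (z-r_1)^2$ and $J\sim (z-r_2)^\mu$, whence $\theta J$ vanishes to order $1$ at $r_1$ and to order $\mu-1$ at $r_2$; at the cusp both $J$ and $\theta J$ have a simple pole in $q=q_{\mathfrak{t}_\mu}$. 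The bookkeeping then cancels cleanly: in the even case $\theta J/(J(J-1))$ has order $-1$ at each of $r_1,r_2$, so its $\mu$-th power contributes order $-\mu$, matched by the order $+\mu$ of $(J-1)^{\mu_0}$ at $r_1$ and of $J$ at $r_2$ (recall $\mu=2\mu_0$); at the cusp the same factor has $q$-order $+1$, giving $\Delta_{\mathfrak{t}_\mu}$ total $q$-order $\mu_0-1\ge 1$. The odd case is structurally identical, producing cancellations $-2\mu+2\mu=0$ at each elliptic point and $q$-order $\mu-2\ge 1$ at the cusp. In both cases $\Delta_{\mathfrak{t}_\mu}$ is holomorphic on $\mathbb{H}$, nonvanishing at the elliptic points, and vanishing at the cusp, so it is a genuine cusp form of the stated weight.

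The step I expect to be the main obstacle is pinning down the exact local orders of $\theta J$ at $r_1$ and $r_2$, equivalently that $J-J(r_j)$ vanishes to order equal to the ramification index $e_j\in\{2,\mu\}$. This rests on the fact that the stabilizer of $r_j$ acts on the tangent space by a primitive $e_j$-th root of unity, which forces the invariant function $J$ to be a power series in the $e_j$-th power of a local coordinate. I would supply this either by linearizing the elliptic generators in the disk model or by invoking the conformal-mapping and valence description of $J_{\mathfrak{t}_\mu}$ already recorded after Definition \ref{hauptmodulFromHalphen}. Granting those orders, the weight computation and the cancellation bookkeeping are routine.
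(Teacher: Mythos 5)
Your argument is correct, and it is genuinely different from what the paper does: the paper's ``proof'' is a one-line deferral to the cited source (Doran et al.), whereas you give a self-contained verification. Your weight computation is right in both parity cases ($2\mu$ for $\mu=2\mu_0$ even, $4\mu$ for $\mu$ odd, each equal to $2\,\lcm(2,\mu)$), and your local order bookkeeping checks out: with $J-1$ of order $2$ at $r_1$, $J$ of order $\mu$ at $r_2$, and $J\sim q^{-1}$ at the cusp, the factor $\theta J/(J(J-1))$ has order $-1$ at each elliptic point and $q$-order $+1$ at the cusp, so the powers $\mu$ (resp.\ $2\mu$) are exactly cancelled by $J(J-1)^{\mu_0}$ (resp.\ $J^2(J-1)^{\mu}$), leaving cusp order $\mu_0-1\ge 1$ (resp.\ $\mu-2\ge 1$). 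You are also right to flag the one nontrivial input, namely that the ramification index of $J_{\mathfrak{t}_\mu}$ at $r_j$ equals the order $e_j\in\{2,\mu\}$ of the elliptic stabilizer and that $J_{\mathfrak{t}_\mu}$ is unramified elsewhere; this is the valence-one/conformal-mapping property recorded after Definition \ref{hauptmodulFromHalphen}, and either of your two proposed justifications (linearizing the stabilizer, or quoting the conformal description) closes it. What your route buys is transparency and the extra information that $\Delta_{\mathfrak{t}_\mu}$ is nonvanishing on $\mathbb{H}$ and vanishes at the cusp, i.e.\ it is a cusp form; what the paper's route buys is brevity and consistency with its policy of importing all the Darboux--Halphen machinery from the reference.
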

    \begin{proof}
        Taken from \cite{doranEtAl2013}.
    \end{proof}
    \begin{notn}\label{discriminantWeight}
        From the previous theorem, it will be convenient to fix the notation
            \begin{eqnarray*}
                \delta_{\mathfrak{t}_\mu} : = 2
                \, \emph{\lcm}(2,\mu),   
            \end{eqnarray*}
        as it is a parameter that occurs frequently.
    \end{notn}

    \begin{defin}[Hecke quasiautomorphic form]   
        Let $ E_{\mathfrak{t}_\mu, 2}(i
        \infty) = C_{\mathfrak{t}_\mu} $ and let  
            \begin{eqnarray*}
                \gamma_{\mathfrak{t}_\mu}(z) = \frac{az+b}{cz+d} \in \mathfrak{H}(\varpi_\mu) .
            \end{eqnarray*}
        If for some $ f(z) $ where $ z
        \in \mathbb{H} $ satisfies 
            \begin{equation*}   
                \frac{f\left( \gamma_{\mathfrak{t}_3}z\right)}{(cz+d)^2} =    f(z) + C_{\mathfrak{t}_\mu}\, \frac{c}{ (cz+d)},
            \end{equation*}
        then we say $f$ is a \emph{Hecke quasiautomorphic form} weight $2$ and of depth $ r = 1$.
    \end{defin}
We note that from $ E_{\mathfrak{t}_\mu, 2} $ all other quasiautomorphic forms are forthcoming when we introduce depth $ r \in \mathbb{N} $ e.g. see Definition \ref{deepVectorSpace}.

Quasiautomorphic forms for $ \mathfrak{H}(1) $ were introduced in \cite{kanekoZagier1995} as a generalization of modular forms. A self-contained introduction to these functions can be found in \cite{royer2012}.
    \begin{defin}
        We define
            \begin{eqnarray*}
               E_{\mathfrak{t}_\mu, 2}(z) := \frac{ C_{\mathfrak{t}_\mu}}{b_{\mathfrak{t}_\mu}}
               \left(({b_{\mathfrak{t}_\mu}-a_{\mathfrak{t}_\mu}})y_{\mathfrak{t}_\mu, 1} - {b_{\mathfrak{t}_\mu}}y_{\mathfrak{t}_\mu, 2} + (b_{\mathfrak{t}_\mu}+{a_{\mathfrak{t}_\mu}-1})y_{\mathfrak{t}_\mu, 3} \right)  
            \end{eqnarray*} 
        where $ C_{\mathfrak{t_\mu}} $ is the structure constant found at Definition \ref{constantOfVanishing} and the constants $a_{\mathfrak{t}_\mu}$ and $b_{\mathfrak{t}_\mu}$ come from \eqref{darbouxHalphenConstants}.
    \end{defin}

    \begin{thm}
        The function $ E_{\mathfrak{t}_\mu, 2} $ is the unique normalized quasiautomorphic form of weight $ 2 $ on $ \mathfrak{H}(\varpi_\mu) $.
    \end{thm}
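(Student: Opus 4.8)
The plan is to establish the statement in two parts: first that the explicitly defined $E_{\mathfrak{t}_\mu, 2}$ is a quasiautomorphic form of weight $2$ and depth $1$ carrying the structure constant $C_{\mathfrak{t}_\mu}$, and second that any form meeting this definition must equal it. Since $\mathfrak{H}(\varpi_\mu)$ is generated by $S_{\mathfrak{t}_\mu}$ and $T_{\mathfrak{t}_\mu}$ of \eqref{generators}, the first part reduces to checking the slash relation on these two generators, \emph{provided} the correction term propagates correctly under composition. Writing the weight-$2$ slash operator as $(f \mid \gamma)(z) := j(\gamma,z)^{-2} f(\gamma z)$ with automorphy factor $j(\gamma, z) := cz + d$, and setting $\phi(\gamma, z) := c/(cz+d) = \tfrac{d}{dz}\log j(\gamma, z)$, I would first verify the cocycle identity
\[
\phi(\gamma_1\gamma_2, z) = \phi(\gamma_2, z) + (c_2 z + d_2)^{-2}\,\phi(\gamma_1, \gamma_2 z),
\]
which follows by differentiating the automorphy relation $j(\gamma_1\gamma_2, z) = j(\gamma_1, \gamma_2 z)\, j(\gamma_2, z)$ and applying the chain rule. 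This identity is exactly what guarantees that the defining relation $(f\mid\gamma)(z) = f(z) + C_{\mathfrak{t}_\mu}\,\phi(\gamma, z)$, once verified on $S_{\mathfrak{t}_\mu}$ and $T_{\mathfrak{t}_\mu}$, extends to all of $\mathfrak{H}(\varpi_\mu)$.

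On the generator $T_{\mathfrak{t}_\mu}$ the relation is immediate: each $y_{\mathfrak{t}_\mu, j}$ is a function of $q = e^{2\pi i z/\varpi_\mu}$, which is invariant under $z \mapsto z + \varpi_\mu$, so $E_{\mathfrak{t}_\mu, 2}$ is $\varpi_\mu$-periodic --- precisely the slash relation for $T_{\mathfrak{t}_\mu}$ (where $c = 0$) --- and holomorphy of $E_{\mathfrak{t}_\mu, 2}$ on $\mathbb{H}$ is inherited from the $q$-expansions of the $y_{\mathfrak{t}_\mu, j}$. The substantive case is $S_{\mathfrak{t}_\mu}: z \mapsto -1/z$, where $cz + d = z$ and the target identity reads
\[
E_{\mathfrak{t}_\mu, 2}(-1/z) = z^2\, E_{\mathfrak{t}_\mu, 2}(z) + C_{\mathfrak{t}_\mu}\, z.
\]
Here I would insert the transformation behavior of the Darboux--Halphen solutions $y_{\mathfrak{t}_\mu, j}$ under $S_{\mathfrak{t}_\mu}$, as determined by the system \eqref{halphenForHeckeGeneral}, into the linear combination defining $E_{\mathfrak{t}_\mu, 2}$. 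The coefficients $b_{\mathfrak{t}_\mu} - a_{\mathfrak{t}_\mu}$, $-b_{\mathfrak{t}_\mu}$, $b_{\mathfrak{t}_\mu} + a_{\mathfrak{t}_\mu} - 1$ and the overall factor $C_{\mathfrak{t}_\mu}/b_{\mathfrak{t}_\mu}$ are engineered so that the genuinely weight-$2$ part of the transform reassembles into $z^2 E_{\mathfrak{t}_\mu, 2}(z)$ while all anomalous contributions collapse to the single term $C_{\mathfrak{t}_\mu}\, z$. Carrying out this cancellation, and simultaneously confirming that the constant term of the $q$-expansion matches the prescribed $C_{\mathfrak{t}_\mu} = E_{\mathfrak{t}_\mu, 2}(i\infty)$, is the main obstacle of the argument.

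For uniqueness, let $f$ be any Hecke quasiautomorphic form of weight $2$ and depth $1$ on $\mathfrak{H}(\varpi_\mu)$; by definition it carries the same structure constant $C_{\mathfrak{t}_\mu}$. Then the correction terms cancel in the difference $g := f - E_{\mathfrak{t}_\mu, 2}$, so $g(\gamma_{\mathfrak{t}_\mu} z)/(cz+d)^2 = g(z)$ for every $\gamma_{\mathfrak{t}_\mu} \in \mathfrak{H}(\varpi_\mu)$; that is, $g$ is a holomorphic automorphic form of weight $2$. It then remains to show that this space is trivial --- the weight $w = 2$ lies outside the range $w = 4k$ of Theorem \ref{thm:HeckeModularDimension} precisely because it is the exceptional weight. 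I would argue geometrically: a nonzero holomorphic $g$ of weight $2$ produces the invariant differential $g(z)\,dz$, which descends to the compactified quotient of $\mathfrak{H}(\varpi_\mu)$ --- a surface of genus zero, as witnessed by the hauptmodul $J_{\mathfrak{t}_\mu}$, with a single cusp --- where it can have at worst a simple pole at that cusp. Since a meromorphic differential on $\mathbb{P}^1$ cannot have exactly one simple pole (its residues must sum to zero), we obtain $g \equiv 0$. Hence $f = E_{\mathfrak{t}_\mu, 2}$, and the normalized weight-$2$ quasiautomorphic form is unique.
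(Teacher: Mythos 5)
The paper does not prove this theorem at all; it simply cites \cite{doranEtAl2013}, so your proposal is being measured against the argument in that reference rather than anything in the text. Your overall architecture is sound and matches the standard route: reduce the transformation law to the two generators via the cocycle identity for $\phi(\gamma,z)=c/(cz+d)$ (your verification of $\phi(\gamma_1\gamma_2,z)=\phi(\gamma_2,z)+(c_2z+d_2)^{-2}\phi(\gamma_1,\gamma_2 z)$ is correct and is exactly what makes the generator check sufficient), and obtain uniqueness by observing that the difference of two forms with the same structure constant lies in $\mathcal{M}_2(\mathfrak{H}(\varpi_\mu))$, which vanishes. Your residue argument for that vanishing is essentially right, though you should note two small points: the descent of $g(z)\,dz$ to the genus-zero quotient must also be checked at the two elliptic points $r_1$ and $r_2$ (where the order of the descended differential is forced to be $\geq 0$ because it is an integer exceeding $1/e-1$), and after the residue theorem kills the simple pole at the cusp you still need the standard fact that a nonzero holomorphic differential on $\mathbb{P}^1$ does not exist. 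Neither is a real obstacle. Note also that Theorem \ref{thm:HeckeModularDimension} as stated covers only $w=4k$, so you are right that the $w=2$ case needs its own argument.

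The genuine gap is in the existence half, at exactly the step you defer as ``the main obstacle.'' You propose to ``insert the transformation behavior of the Darboux--Halphen solutions $y_{\mathfrak{t}_\mu,j}$ under $S_{\mathfrak{t}_\mu}$, as determined by the system \eqref{halphenForHeckeGeneral}.'' But the system does not determine that behavior: it is an autonomous first-order system whose solution space carries a natural $\mathbf{SL}_2$-action (precomposition with M\"obius maps, suitably twisted, sends solutions to solutions), and the particular solution triple for which $E_{\mathfrak{t}_\mu,2}$ is quasiautomorphic is singled out only by its construction from the Schwarz map and the hauptmodul $J_{\mathfrak{t}_\mu}$ --- equivalently, from the hypergeometric description of the inverse of $J_{\mathfrak{t}_\mu}$. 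Establishing that $y_{\mathfrak{t}_\mu,j}(S_{\mathfrak{t}_\mu}z)$ equals $z^2 y_{\mathfrak{t}_\mu,j}(z)$ plus the correct multiple of $z$ is the entire analytic content of the theorem, and it is precisely what \cite{doranEtAl2013} supplies (their Theorem 4(i), which the present paper invokes). As written, your argument assumes the conclusion at this point rather than deriving it; to close the gap you would need to either import the Schwarz-map identification of the $y_{\mathfrak{t}_\mu,j}$ or reprove the relevant transformation law from the logarithmic derivatives of $J_{\mathfrak{t}_\mu}$, $J_{\mathfrak{t}_\mu}-1$, and $\theta J_{\mathfrak{t}_\mu}$.
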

    \begin{proof}
        Taken from \cite{doranEtAl2013}.
    \end{proof}
The unique quasiautomorphic form $ E_{\mathfrak{t}_\mu,2 } $ is perhaps the most important quasiautomorphic form. Connected to this function is a structure constant. We present this now in
    \begin{defin}[Structure constant]\label{constantOfVanishing}
        We define the \emph{structure constant} by
            \begin{eqnarray}
                C_{\mathfrak{t}_\mu} := E_{\mathfrak{t}_\mu, 2}(i
                \infty) =\frac{\delta_{\mathfrak{t}_\mu}}{2\pi i} = 
                    \begin{cases}
                        \frac{\mu }{\pi i} \qquad \mu \text{ even},\\
                        \frac{2\mu }{\pi i} \qquad \mu \text{ odd}.
                    \end{cases}
            \end{eqnarray}
    \end{defin}

    \begin{exa}
        Notice that when $  S_{\mathfrak{t}_3}z = -1/z $ and $ \mu = 3 $, then we have $ C_{\mathfrak{t}_3} = 6/ \pi i $ and
            \begin{equation*}   
                \frac{E_{\mathfrak{t}_3, 2}\left( S_{\mathfrak{t}_3}z\right)}{z^2} =    E_{\mathfrak{t}_3,2}(z) - C_{\mathfrak{t}_\mu} S_{\mathfrak{t}_\mu}z
            \end{equation*}
        holds, a formula we are already familiar with. 
    \end{exa}

By a direct application of results in \cite{doranEtAl2013} or also \cite{ashokEtAl2020II}, we can give  explicitly the more general ``Ramanujan differential system'' for automorphic forms on $ \mathfrak{H}(\varpi_\mu) $. 
    \begin{cor}[Ramanujan differential system]\label{generalizedRamanjanSystem}
         Our functions $ f(z) $ all have Fourier expansions in terms of $ q = q_{\mathfrak{t}_\mu} := e^{2 \pi i z/\varpi_\mu } $ or $ g(q) $. Let $ \theta f(z) = q \, d  g(q)/ dq $ i.e $ \theta $ is the so-called Boole differential operator. Then the differential system
            \begin{eqnarray*}
                    \begin{cases}
                        \theta E_{\mathfrak{t}_\mu, 2} \, = 1\,\frac{\mu-2}{4\mu} E_{\mathfrak{t}_\mu,2}E_{\mathfrak{t}_\mu, 2}  - \frac{\mu-2}{4\mu}E_{\mathfrak{t}_\mu,4}
                        , \\
                        \theta E_{\mathfrak{t}_\mu, 4} \,= 2\,\frac{\mu-2}{2\mu}E_{\mathfrak{t}_\mu,4}E_{\mathfrak{t}_\mu,2}  - \frac{\mu-2}{\mu}E_{\mathfrak{t}_\mu,6} - \frac{2-2}{2}E_{\mathfrak{t}_\mu,4}E_{\mathfrak{t}_\mu,2 }, \qquad (\dagger) \\
                        \theta E_{\mathfrak{t}_\mu,6} \,= 3\,\frac{\mu-2}{2\mu} E_{\mathfrak{t}_\mu,6}E_{\mathfrak{t}_\mu,2} -\frac{\mu-3}{\mu}E_{\mathfrak{t}_\mu,8} - \frac{3-2}{2}E_{\mathfrak{t}_\mu,4}E_{\mathfrak{t}_\mu,4} , \\
                        \theta E_{\mathfrak{t}_\mu,8} \,= 4\,\frac{\mu-2}{2\mu}E_{\mathfrak{t}_\mu,8}E_{\mathfrak{t}_\mu,2} -\frac{\mu-4}{\mu}E_{\mathfrak{t}_\mu,10} - \frac{4 - 2}{2}E_{\mathfrak{t}_\mu,4}E_{\mathfrak{t}_\mu,6}, \\
                        \qquad \vdots   \\
                        \theta E_{\mathfrak{t}_\mu, 2\mu} = \mu\, \frac{\mu-2}{2\mu}E_{\mathfrak{t}_\mu, 2\mu}E_{\mathfrak{t}_\mu,2} -\frac{\mu-\mu}{\mu}E_{\mathfrak{t}_\mu, 2\mu+2}-\frac{\mu - 2}{2}E_{\mathfrak{t}_\mu,4} E_{\mathfrak{t}_\mu, 2\mu-2}\qquad (\dagger)
                    \end{cases}    
            \end{eqnarray*}
        holds.
    \end{cor}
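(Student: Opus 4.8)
The plan is to reduce the entire system to the Hecke Darboux--Halphen equations \eqref{halphenForHeckeGeneral} together with the product representation \eqref{EisensteinFromHalphen} of the Eisenstein series. Throughout I suppress the label $\mathfrak{t}_\mu$, writing $\mathsf{u},\mathsf{v},y_j,E_{2k}$ for $\mathsf{u}_{\mathfrak{t}_\mu},\mathsf{v}_{\mathfrak{t}_\mu},y_{\mathfrak{t}_\mu,j},E_{\mathfrak{t}_\mu,2k}$, and I abbreviate $\kappa:=\varpi_\mu/(2\pi i)$, so that \eqref{EisensteinFromHalphen} reads $E_{2k}=\kappa^{k}\mathsf{u}\mathsf{v}^{k-1}$ for $k\geq 2$. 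The argument separates into the exceptional weight-$2$ line, which governs the genuine quasiautomorphic form $E_2$, and the uniform family of identities for the automorphic weights $2k\geq 4$.

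For the weights $2k\geq 4$ I would proceed all at once. Since $\theta=q\,d/dq$ is a derivation, the Leibniz rule gives $\theta E_{2k}=\kappa^{k}\big((\theta\mathsf{u})\mathsf{v}^{k-1}+(k-1)\mathsf{u}\mathsf{v}^{k-2}(\theta\mathsf{v})\big)$, so everything rests on $\theta\mathsf{u}=\theta y_1-\theta y_2$ and $\theta\mathsf{v}=\theta y_3-\theta y_2$. Substituting \eqref{halphenForHeckeGeneral} makes each of these a quadratic form in $y_1,y_2,y_3$, and the decisive structural point is that $\theta\mathsf{u}$ vanishes when $y_1=y_2$ and $\theta\mathsf{v}$ vanishes when $y_3=y_2$; hence each is divisible by the corresponding difference. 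Concretely one finds $\theta\mathsf{u}=\mathsf{u}\big(-\tfrac{1}{\mu}y_1-\tfrac{3}{2}y_3\big)=:\mathsf{u}\,L_1$, and $\theta\mathsf{v}=\mathsf{v}\,L_2$ for an analogous linear form $L_2$. Substituting back, the whole Leibniz expression collapses to the clean factorization $\theta E_{2k}=E_{2k}\big(L_1+(k-1)L_2\big)$, a single weight-$2$ factor times $E_{2k}$.

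It then remains to read $L_1+(k-1)L_2$ in automorphic terms. The forms $L_1$ and $L_2$ are weight-$2$ objects, and $\{E_2,\mathsf{u},\mathsf{v}\}$ is a basis of the three-dimensional weight-$2$ space: expanding these three in $y_1,y_2,y_3$ gives a matrix which, up to the nonzero scalar $C_{\mathfrak{t}_\mu}/b_{\mathfrak{t}_\mu}$, has determinant $\tfrac{3\mu+2}{4\mu}\neq 0$. Writing $L_1$ and $L_2$ in this basis and then using $\kappa\mathsf{v}=E_{2k+2}/E_{2k}$ and $\kappa\mathsf{u}=E_4E_{2k-2}/E_{2k}$, the factor $L_1+(k-1)L_2$ turns into a combination of $E_2$, $E_{2k+2}/E_{2k}$ and $E_4E_{2k-2}/E_{2k}$ whose coefficients are affine in $k$; multiplying through by $E_{2k}$ reproduces the three-term right-hand sides. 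This affine-in-$k$ bookkeeping is exactly what matches the displayed coefficients $\tfrac{k(\mu-2)}{2\mu}$, $\tfrac{\mu-k}{\mu}$ and $\tfrac{k-2}{2}$, and in particular forces the coefficient of $E_{2\mu+2}$ to be $\tfrac{\mu-\mu}{\mu}=0$ at the top line $k=\mu$.

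Finally, the weight-$2$ line must be handled on its own, because the form appearing there is the quasiautomorphic $E_2$ and not $\kappa\mathsf{u}$ (the naive $k=1$ specialization of the family would only produce the equation for $\kappa\mathsf{u}$). Here I would substitute the explicit combination defining $E_2$ into $\theta E_2$, reduce with \eqref{halphenForHeckeGeneral}, and recognize the result as $\tfrac{\mu-2}{4\mu}\big(E_2^2-E_4\big)$; this is precisely the Riccati equation \eqref{extremalRiccati} anticipated in the introduction. The main obstacle throughout is the change of basis between $\{y_j\}$ and $\{E_2,\mathsf{u},\mathsf{v}\}$ --- one must keep the quasiautomorphic $E_2$, with its nonzero coefficient-sum and its normalization by the structure constant $C_{\mathfrak{t}_\mu}$, carefully distinct from $\kappa\mathsf{u}$ --- together with verifying that the linear forms $L_1,L_2$ delivered by Darboux--Halphen are exactly the ones demanded by the target system.
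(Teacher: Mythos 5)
Your argument is correct in outline, but it takes a genuinely different route from the paper: the paper's entire proof of Corollary \ref{generalizedRamanjanSystem} is a citation, obtaining the system by substituting the Hecke data $(2,\mu,\infty)$ into Lemma 2.1 of \cite{ashokEtAl2020I} (equivalently Lemma 2.5 of \cite{ashokEtAl2020II}), whereas you rederive it from material already in the paper, namely the Hecke Darboux--Halphen system \eqref{halphenForHeckeGeneral} and the product formula \eqref{EisensteinFromHalphen}. Your structural claims check out. With the constants \eqref{darbouxHalphenConstants} one has $(a_{\mathfrak{t}_\mu}-1)+(b_{\mathfrak{t}_\mu}+c_{\mathfrak{t}_\mu}-1)-(b_{\mathfrak{t}_\mu}-1)-(a_{\mathfrak{t}_\mu}+c_{\mathfrak{t}_\mu}-1)=0$, so $\theta\mathsf{u}_{\mathfrak{t}_\mu}$ does vanish on $y_{\mathfrak{t}_\mu,1}=y_{\mathfrak{t}_\mu,2}$ and factors as $\mathsf{u}_{\mathfrak{t}_\mu}\bigl(-\tfrac{1}{\mu}y_{\mathfrak{t}_\mu,1}-\tfrac{3}{2}y_{\mathfrak{t}_\mu,3}\bigr)$ exactly as you state (and the analogous cancellation gives $\theta\mathsf{v}_{\mathfrak{t}_\mu}=\mathsf{v}_{\mathfrak{t}_\mu}L_2$); moreover the change-of-basis determinant from $\{y_{\mathfrak{t}_\mu,1},y_{\mathfrak{t}_\mu,2},y_{\mathfrak{t}_\mu,3}\}$ to $\{E_{\mathfrak{t}_\mu,2},\mathsf{u}_{\mathfrak{t}_\mu},\mathsf{v}_{\mathfrak{t}_\mu}\}$ is, up to the factor $C_{\mathfrak{t}_\mu}/b_{\mathfrak{t}_\mu}$, equal to $1-b_{\mathfrak{t}_\mu}=\tfrac{3\mu+2}{4\mu}\neq 0$, as you claim. (Strictly it is the three-dimensional span of the linear forms $y_{\mathfrak{t}_\mu,1},y_{\mathfrak{t}_\mu,2},y_{\mathfrak{t}_\mu,3}$ that these three functions span, not a space of weight-$2$ forms, but the linear algebra is what you actually use.) What your approach buys is self-containedness and an explanation of \emph{why} each right-hand side is a three-term expression with coefficients affine in $k$; what the paper's citation buys is brevity and a guarantee that the displayed coefficients agree with the source's normalization.

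Two points should be made explicit before this counts as a complete proof. First, the identification $\tfrac{\varpi_\mu}{2\pi i}\mathsf{u}_{\mathfrak{t}_\mu}=E_{\mathfrak{t}_\mu,4}E_{\mathfrak{t}_\mu,2k-2}/E_{\mathfrak{t}_\mu,2k}$ is only valid for $k\geq 3$, since $E_{\mathfrak{t}_\mu,2}\neq\tfrac{\varpi_\mu}{2\pi i}\mathsf{u}_{\mathfrak{t}_\mu}$; the $k=2$ line survives only because the coefficient $\tfrac{k-2}{2}$ vanishes there, and you should verify that your affine-in-$k$ coefficient of the $\mathsf{u}_{\mathfrak{t}_\mu}$-term really does vanish at $k=2$ rather than assume it. Second, the final matching of the three affine-in-$k$ coefficients against $\tfrac{k(\mu-2)}{2\mu}$, $\tfrac{\mu-k}{\mu}$ and $\tfrac{k-2}{2}$ is the only place the specific numbers of the corollary enter, so it cannot be waved away as bookkeeping: since each coefficient is affine in $k$, checking two values of $k$ suffices, but that check must actually be performed (and likewise the separate weight-$2$ computation reducing $\theta E_{\mathfrak{t}_\mu,2}$ to $\tfrac{\mu-2}{4\mu}\bigl(E_{\mathfrak{t}_\mu,2}^2-E_{\mathfrak{t}_\mu,4}\bigr)$). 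With those verifications written out, your derivation stands as a complete and more informative alternative to the paper's proof by citation.
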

    \begin{proof}
        This is direct substitution of the Hecke triangle data in Lemma 2.1 found in \cite{ashokEtAl2020I}. Or, again after substitution, the same result can be derived from Lemma 2.5 from \cite{ashokEtAl2020II}.
    \end{proof}

    \begin{warn}
        Observe that there is some redundancy in the presentation of Corollary \ref{generalizedRamanjanSystem} to make the pattern more clear. We have indicated where there are occurrences of vanishing with $ (\dagger) $.  
    \end{warn}

    \begin{exa}\label{ramanujanSystem}
        One import of Corollary \ref{generalizedRamanjanSystem} for quasiautomorphic forms over $ \mathfrak{H}(1) $ is that the classical so-called Ramanujan differential system, namely
            \begin{equation*}
                \begin{cases}
                    \theta E_2 = \frac{1}{12}\left(E_2 E_2 - E_4\right), \\
                    \theta E_4 = \frac{1}{3}\left( E_4 E_2 - E_6\right), \\
                    \theta E_6 = \frac{1}{2}\left( E_6  E_2 - E_4^2\right),
                \end{cases}   
            \end{equation*}
        is a special instance. This Ramanujan differential system has been rediscovered many times (as partly recounted in \cite{moralesEtAl}). See \cite{zudilin2000} for analogues of the Ramanujan system in the special context of  automorphic forms on subgroups of $ \mathfrak{H}(1) $. Related systems can also be found in \cite{hahn2008},  \cite{huberLara2011}, \cite{nikdelan2022}, \cite{ohyama1995}, and \cite{ohyama1996}.
    \end{exa}

Table \ref{tab:functionData} quickly summarizes the functions that we have defined for general Hecke triangle groups that are completely analogous to the classical situation.

\begin{table}[H]
            \centering
            \begin{tabular}{c|c|c}
                function & a so-called & weight  \\ \hline 
                 $J_{\mathfrak{t}_\mu} $ & automorphic function & $0$  \\ \hline
                 $ E_{\mathfrak{t}_\mu,2} $ & quasiautomorphic form & $2$ \\ \hline
                 $E_{\mathfrak{t}_\mu,4} $ & automorphic form & $4$ \\
                 $E_{\mathfrak{t}_\mu, 6} $ & automorphic form & $6$ \\
                 $\vdots $ & $\vdots $ & $\vdots $ \\
                 $E_{\mathfrak{t}_\mu, 2\mu }$ & automorphic form & $2 \mu$ \\
                 $ \Delta_{\mathfrak{t}_\mu} $ & automorphic form &  $2 \lcm(2, \mu) = \delta_{\mathfrak{t}_\mu}$ 
            \end{tabular}
            \caption{Function data modulo $\mathfrak{H}(\varpi_\mu) $}
            \label{tab:functionData}
        \end{table}

\subsection{Vector spaces of Hecke quasiautomorphic forms and the Ramanujan-Serre operator}
The following is an analogue of the standard definition for quasimodular forms. 
    \begin{defin}\label{deepVectorSpace}
        The vector space of quasiautomorhic forms of weight $ w $ and of depth $ r \in \mathbb{N}\cup \{0\} $ is defined by
            \begin{eqnarray*}
                \mathcal{QM}^{r}_w(\mathfrak{H}(\varpi_\mu)) := \bigoplus_{m=0}^r \mathcal{M}_{w-2k}(\mathfrak{H}(\varpi_\mu))E_{\mathfrak{t}_\mu, 2}^m 
            \end{eqnarray*}
    \end{defin}

Observe that this allows us to write any quasiautomorphic form $ U_{\mathfrak{t}_\mu, w, r} \in \mathcal{QM}^{r}_w(\mathfrak{H}(\varpi_\mu))$ as 
    \begin{eqnarray*}
        U_{\mathfrak{t}_\mu, w, r}(z) = 
        \sum_{k=0}^{r} B_{\mathfrak{t}_\mu, w-2k }(z)E_{\mathfrak{t}_\mu, 2}^k(z)
    \end{eqnarray*}
for some $ B_{\mathfrak{t}_\mu,w - 2k} \in \mathcal{M}_{w-2k}(\mathfrak{H}(\varpi_\mu)) $. So, given elements of the vector space $ \mathcal{QM}^{r}_{w}(\mathfrak{H}(\varpi_\mu)) $, these functions are homogeneous with respect to weight when expanded in a sum. When $w = 4k $, the dimension of $ \mathcal{QM}^{r}_{w}(\mathfrak{H}(\varpi_\mu) $ can be determined in conjunction with Theorem \ref{thm:HeckeModularDimension} by computing 
    \begin{eqnarray*}
        \sum_{k=0}^{r}\dim \mathcal{M}_{w-2k}(\mathfrak{H}(\varpi_\mu)).
    \end{eqnarray*}
Dimension formulas can be given completely for special cases $ \mu = 3 $, $ 4 $, $ 6 $.

   \begin{exa}\label{dijgraaf}
        We want to provide an example of a quasiautomorphic form from a vector space of depth larger than $ r = 1 $; these objects have begun to appear in more and more places. For example, it was shown in \cite{dijkgraaf1995} that the quasimodular form
            \begin{eqnarray*}
                \sum_{n=0}^{3} B_{\mathfrak{t}_3, 6-2n}E_{\mathfrak{t}_3, 2}^n &=&
                \frac{5 E_{\mathfrak{t}_3, 2}^3 -3E_{\mathfrak{t}_3,4}E_{\mathfrak{t}_3, 2} - 2E_{\mathfrak{t}_3, 6}}{51840}  \qquad \qquad
                \\ &=& q^2 + 8q^3 + 30q^4 + 80q^5 + 180 q^6+ 336 q^7 +  \cdots 
            \end{eqnarray*}
        is a generating function for some objects from string theory (generating function counts the number of genus $ g = 2 $  covers of a $ g = 1 $ elliptic curve of degree $ n $). The up-shot is the sequence of integer coefficients 
            \begin{eqnarray*}
                0 , 0, 1, 8, 30, 80, 180, 336, ... 
            \end{eqnarray*}    
        has an interesting combinatorial interpretation\footnote{See A126858 at www.oeis.org}. See also \cite{mazur2004} for further discussion of this and some related functions. Dijkgraaf's result is generalized to genus $ 6g-6 $ in \cite{kanekoZagier1995}.
    \end{exa}

    \begin{defin}[Ramanujan-Serre operator]\label{Ramanujan-Serre}
        Let $ f(z) \in \mathcal{M}_w(\mathfrak{H}(\varpi_\mu))$ have an expansion in terms of $ {q := e^{2 \pi i z/\varpi_\mu}} $ or $ g(q)$ is a Fourier series. We write $ \theta f(z) = q \frac{d \,g(q)}{dq} $. Then the so-called \emph{Ramanujan-Serre differential operator} $ \partial_{\mathfrak{t}_\mu, w} $ is defined by
            \begin{eqnarray}\label{generalizedSerre}
                \partial_{\mathfrak{t}_\mu, w} f &:=& \theta f  - \frac{w}{2}\left(1 - \frac{1}{2} -\frac{1}{\mu}\right) E_{\mathfrak{t}_\mu, 2} f\\ &=& \theta f  - \frac{w(\mu-2)}{4\mu} E_{\mathfrak{t}_\mu, 2} f
            \end{eqnarray}
        (recall that necessarily here $ w = 2k $ holds for some $ 2 \leq k $).
    \end{defin}

As is customary, we define the iterated Ramanujan-Serre derivative inductively in
    \begin{notn}
        If $ k \in \mathbb{N}\cup \{0\} $, then for the iterated Ramanujan-Serre differential operator we write
        \begin{eqnarray*}
            \partial_{\mathfrak{t}_\mu, w}^0f &:=& f,\\ 
            \partial^{k+1}_{\mathfrak{t}_\mu, w}f &:=& \partial_{\mathfrak{t}_\mu, w+2k}(\partial^{k}_{\mathfrak{t}_\mu, w} f).
        \end{eqnarray*}
    \end{notn}

An important fact about $ \partial_{\mathfrak{t}_\mu, w }  $ is that each application increases the weight of the function by $ 2 $. In other words, suppose that $ f $ is an quasiautomorphic form of weight $ w $. Then $ \partial_{\mathfrak{t}_\mu, w }f $ has weight $ w+2 $. Also, it is not hard to show that $ \partial_{w} $ obeys the product rule or
    \begin{eqnarray*}
        \partial_{\mathfrak{t}_\mu, w_f + w_g}(fg) = \left(\partial_{\mathfrak{t}_\mu, w_f}f\right)g  + f\left(\partial_{\mathfrak{t}_\mu, w_g}g\right) 
    \end{eqnarray*}
holds where notation here is obvious.

\section{Introducing Hecke vector-forms}
Having introduced the background that we need, we now introduce the Hecke vector-forms. This is a method of representing a quasiautomorphic form $U_{\mathfrak{t}_\mu, w, r} $ as a vector function, whereas this vector function has a simpler transformation behavior under the associated generators $T_{\mathfrak{t}_\mu}z$ and $S_{\mathfrak{t}_\mu}z $.

\subsection{The idea of a vector-form}
Because this section is quite long and the build-up to the Fundamental Theorem is technical, we give a condensed explanatory summary beforehand so that the reader knows our aim.

\begin{enumerate}
    \item Given some quasiautomorphic form $ U_{\mathfrak{t}_\mu, w, r} $, then, as the depth $ r $ increases, the function becomes increasingly complicated under elements of $ \mathfrak{H}(\varpi_\mu) $. Our solution is to encode $U_{\mathfrak{t}_\mu, w, r} $ in a vector function that will then have a simpler transformation behavior. These vector functions -- what we call the Hecke vector-forms -- are vector analogues of automorphic forms. To wit, we first introduce vector components $ g_{U,\ell} $ (see Definition \ref{def:components} for details) that depend on the function $ U_{\mathfrak{t}_\mu, w, r} $, and it is this $ g_{U,\ell} $ for $0 \leq \ell \leq r $ in the final construction that has many special properties. Much of the technicality is in proving what we need for these component functions, though all proofs are strictly elementary.
    \begin{figure}[H]
        \centering
            \begin{tikzcd}
                & U_{\mathfrak{t}_\mu, w, r}(z) \ar[dl] \ar[d] \ar[dr] & \\
                g_{U,0}(z) & g_{U,1}(z) \quad \cdots &  g_{U,r}(z)        
            \end{tikzcd}
        \end{figure}
    \item After an introduction of $ g_{U,\ell}(z) $, we introduce binomial analogues in the functions \[ { f_{U,k}(z) =  z^k \sum_{\ell=0}^{\min(k,r)}\binom{k}{\ell}  \frac{g_{U, \ell}(z)}{z^\ell}} \] for $ 0 \leq k \leq r $ (again, see Definition \ref{def:components} for details). We must establish some properties of these functions as well, with proofs very much the same as those for determining properties of $ g_{U,\ell}$. A Hecke vector-form $\vec{F}_U$ with respect to $ U_{\mathfrak{t}_\mu, w, r} $ (in the simplest case) then, is just
        \begin{eqnarray*}
            \vec{F}_U(z) =
                \begin{pmatrix}
                    f_{U,0}(z) \\
                    f_{U,1}(z) \\
                    \vdots \\
                    f_{U,r}(z)
                \end{pmatrix}.
        \end{eqnarray*}
    It is the transformation properties of $ \vec{F}_U $ that are given in the Fundamental Theorem and are the primary motivation of our work.
    
    \item In the last stage, we prove the Fundamental Theorem of Hecke vector-forms; this gives the transformation behavior under the generators of the triangle group $ \mathfrak{H}(\varpi_\mu) $. We will quote the fruit of the labor now, leaving out some minutiae for the appropriate sections. Where $ p_U^\vee $ and $\mathbf{a}_U $ can be thought of as natural multipliers, we show that
        \begin{eqnarray*}
            \vec{F}_{{U}}\left(T_{\mathfrak{t}_\mu}z, {\varpi}^{-1}\right) &=& p_U^{\vee}\vec{F}_U(z), \\
            \frac{\vec{F}_{{U}}(S_{\mathfrak{t}_\mu}z)}{z^{w-r}} &=&  \pm\mathbf{a}_U \vec{F}_{U}(z)
        \end{eqnarray*}        
    hold. This behavior of Hecke vector-forms should be compared to the transformation behavior of automorphic forms under generators $ T_{\mathfrak{t}_\mu}z $ and $ S_{\mathfrak{t}_\mu}z $, which we reproduce for convenience: suppose that $ F_w $ is an automorphic form of weight $ w $ on $\mathfrak{H}(\varpi_{\mu}) $. Then 
        \begin{eqnarray*}
            F_w(T_{\mathfrak{t}_\mu}z) &=& F_w(z), \\
            \frac{F_w(S_{\mathfrak{t}_\mu}z)}{z^w} &=& F_w(z)
        \end{eqnarray*}
    hold. 
\end{enumerate}

\subsection{Required matrix theory}\label{matrixTheory}

For completeness we give a very concise summary of basic matrix notions; not everything is strictly review, however, as we also give a few idiosyncratic notions that make our task more convenient. For reference, an encyclopedic source of matrix theory is \cite{hornJohnson1985}. 

An $ m \times n $ \emph{matrix} $ A $ over a field $ \mathbb{F} $ we mean the array
    \begin{equation*}
        A :=
            \begin{pmatrix}
                a_{0,0} & a_{0,1} & \cdots & a_{0,n} \\
                a_{1,0} & a_{2,2} & \cdots & a_{2,n} \\
                \vdots &\vdots & \ddots & \vdots \\
                a_{m,0} & a_{m,1} & \cdots & a_{m,n} 
            \end{pmatrix}    
    \end{equation*}
consisting of $ m+1 $ rows and $ n+1 $ columns where $ a_{i,j} \in \mathbb{F} $.  Another more concise way to notate a matrix is by the double index method i.e. $ A = (a_{i,j}) $, which we shall frequently use.

For our work, we will be interested only in the special case of square $ (r+1) \times (r+1) $ matrices and $ (r+1) \times 1 $ row or $ 1 \times (r+1) $ column matrices. We treat vectors  $ x = (x_0, x_1, \cdots, x_r) $ as column or row matrices as made clear by the context. Because the matrices we use are either square or simply vectors, we do not have to introduce some standard concepts like e.g. \emph{conformable} matrices. 

\begin{warn}
    Throughout we will index entries, say $ a_i $, of our matrices, beginning from $ i = 0 $. Thus, a square matrix $ A_r $ will have order $ (r+1) \times (r+1) $. This practice of indexing shall greatly improve readability.   
\end{warn}

We briefly review some typical operations with matrices. We denote arbitrary $ (r+1) \times (r+1) $ matrices by $ A_r, B_r, C_r $, etc. The elements from the ambient field or function space we denote by $ a_{i,j}, b_{k,l} $, etc.

The following are all standard matrix operations:
        \begin{enumerate}
            \item By \emph{scalar multiplication} we mean $ \alpha A = \alpha ( a_{i,j}) := ( \alpha a_{i,j} ) $ where $ \alpha \in \mathbb{C} $.

            \item By \emph{matrix addition} we mean $ A+B = ((a_{i,j}) + (b_{i,j}) ) := ( a_{i,j} + b_{i,j} ) $.
        
            \item By \emph{matrix multiplication} we mean $ AB = (a_{i,j})(b_{i,j}) := (  \sum_{k=1}^{r}a_{ik}b_{kj} ) $. 
        \end{enumerate}

Next, we introduce some notation for a selection of important special matrices. 
        \begin{defin}
            The \emph{exchange matrix} is defined by
            \begin{eqnarray}\label{exchangeMatrix}
                \iota_r  :=
                    \begin{pmatrix}
                        &  &  & 1^0 \\
                        &  & 1^1 &  \\
                        &  \iddots &  &  \\
                        1^r &  &  &  \\
                    \end{pmatrix}.
            \end{eqnarray} 
        \end{defin}
This exchange matrix (as it is called in \cite{hornJohnson1985}) is perhaps not typical, but it is important for us.
    \begin{defin}\label{def:vandermondeMatrix}
        The so-called \emph{Vandermonde matrix} is defined by 
            \begin{eqnarray}\label{vandermondeMatrix}
                \mathcal{V}_r(z_0, \cdots , z_{r}) :=
                    \begin{pmatrix}
                        1 & z_0 & \cdots & z_0^r \\
                        1 & z_1 & \cdots & z_1^r \\
                        \vdots & \vdots & \ddots & \vdots \\
                        1 & z_r & \cdots & z_r^r 
                    \end{pmatrix}  .
            \end{eqnarray}
    \end{defin}
The following two matrices are vital to our formulations and go hand-in-hand with the Vandermonde matrix. Thus, we introduce special notation:
    \begin{notn} 
        We denote
            \begin{eqnarray}\label{basisVector}
                B_r(z) := 
                    \begin{pmatrix}
                        1 &
                        z &
                        \cdots &
                        z^r
                    \end{pmatrix}^{\text{\emph{T}}}
            \end{eqnarray}
        and similarly  
            \begin{eqnarray}\label{basisMatrix}
                V_r(z) :=
                    \begin{pmatrix}
                        1 & &
                        \\
                        z & 1 &
                        \\
                        \vdots & \vdots & \ddots &
                        \\
                        z^r & z^{r-1} & \cdots & 1
                \end{pmatrix}.
            \end{eqnarray}
        \end{notn}

    \begin{warn}
        Throughout this exposition, we take predictable liberties with the notation. For example, we might write $ A_r(z) = A $, dropping the subscript $ r $ and the argument. When from the context it is clear, we will try to streamline notation.  
    \end{warn}

The following matrix operation is standard 
    \begin{enumerate}
        \item If $ A = ( a_{i,j} ) $, then the so-called \emph{transpose} of a matrix $ A $ is defined by $ A^{\text{T}} := ( a_{j,i} ) $.    
    \end{enumerate}
Matrix transpose is nothing other than ``reflecting" the matrix about the leading diagonal. We introduce two non-standard operations similar to matrix transpose. We have a great use of these operations.
    \begin{defin}\label{def:exchanges}
        We define the $x$-\emph{exchange} and the $y$-\emph{exchange} of matrix $ A_r $ by
            \begin{eqnarray*}
                A_r^{\emph{X}} &: =& \iota_r A_r,\\   
                A_r^{\emph{Y}} &: =&  A_r \iota_r,
            \end{eqnarray*}    
        respectively, where $\iota_r $ is the exchange matrix of equation \eqref{exchangeMatrix}.
    \end{defin}
The naming $ x $-exchange and $ y $-exchange refer to the axis about which the matrix is being ``reflected" by the operation. The matrices $ A^{\text{X}} $ and $A^{\text{Y}}$ occur completely naturally in our study alongside $ A^{\text{T}} $.

\begin{exa}
        To illustrate the behavior of our exchange operations, observe 
            \begin{eqnarray*}
                    \begin{pmatrix}
                        a &&& \\
                        b & c&& \\
                        \vdots & \vdots & \ddots & \\
                        d & e & \cdots & f
                    \end{pmatrix}^{\emph{\text{X}}} 
                =
                    \begin{pmatrix}
                        d & e & \cdots & f \\
                        \vdots & \vdots & \iddots & \\
                        b & c & \\
                        a & & 
                    \end{pmatrix}
            \end{eqnarray*}
        holds, etc.
    \end{exa}

 \begin{defin}[Triangular matrix]
        Let $ t_r^\vee $ be an $ (r+1) \times (r+1) $ matrix of the form
            \begin{eqnarray*}
                t_r^\vee = 
                    \begin{pmatrix}
                        a_{0,0} &&& \\
                        a_{1,0} & a_{1,1} && \\
                        \vdots & \vdots & \ddots & \\
                        a_{r,0} & a_{r,1} & \cdots & a_{r,r} \\
                    \end{pmatrix},
            \end{eqnarray*}
        where $a_{i,j}$ are from some field, say $\mathbb{C} $. Then $ t_r^{\vee} $ is a so-called \emph{lower triangular matrix}. If $ t_r^\vee $ is a lower triangular matrix, then $ t_r^{\vee \emph{\text{T}}} $ is called an \emph{upper triangular matrix}.  
    \end{defin}

    \begin{notn}\label{polyVectors}
        Throughout our essay, we shall denote special lower and upper triangular matrices with $ {}^{\wedge} $  and $ {}^{\vee } $, respectively. As an example, we shall henceforth write
            \begin{eqnarray*}
                V_r^{\vee}(z) =
                    \begin{pmatrix}
                        1 & &
                        \\
                        z & 1 &
                        \\
                        \vdots & \vdots & \ddots &
                        \\
                        z^r & z^{r-1} & \cdots & 1,
                \end{pmatrix}
            \end{eqnarray*}
        etc. With this notation, also observe that the identity $ I_r $ can be written as either an upper triangular or lower triangular matrix i.e. the relation
            \begin{eqnarray*}
                V_r^\vee(0) = I_r^{\vee} = I_r^{\wedge} = V_r^\wedge(0)
            \end{eqnarray*}
        holds.
    \end{notn}

    \begin{defin}[Toeplitz matrix, etc.]\label{def:toeplitz}
        Suppose that $ A_r $ is a matrix of the form
            \begin{eqnarray*}
                A_r = 
                    \begin{pmatrix}
                        a_0 & a_1 & \cdots & a_r \\
                        a_{-1} & a_0 & \ddots & \vdots \\
                         \vdots & \ddots & \ddots & a_1 \\
                        a_{-r} & \cdots & a_{-1} & a_0 \\
                    \end{pmatrix}.
            \end{eqnarray*}
        Then $ A_r $ is a so-called \emph{Toeplitz matrix} (constant in the diagonals). If $ A_r $ is a Toeplitz matrix then 
            \begin{eqnarray*}
                A_r^{\text{\emph{Y}}} = 
                    \begin{pmatrix}
                         a_r & \cdots & a_{1} & a_0\\
                         \vdots & \iddots & a_0 & a_{-1} \\
                         a_1 & \iddots & \iddots & \vdots  \\
                        a_0 & a_{-1} & \cdots & a_{-r} \\
                    \end{pmatrix}
            \end{eqnarray*}
        is a so-called \emph{Hankel matrix} (constant in the antidiagonals). 
    \end{defin}
Triangular matrices have many advantageous properties, so it benefits us to use these matrices over general square matrices in places where we have a choice. Toeplitz and Hankel matrices simplify in the sense that only a single index is needed to define them (see Definition \ref{def:toeplitz}). Thus, triangular Topelitz or triangular Hankel are the most preferred.    

There is another classical operation we set apart in
    \begin{defin}[Schur-Hadamard product]\label{SchurHadamardProd}
        Let $ A $ and $ B $ both be $ (r+1) \times (r+1) $ matrices. The \emph{Schur-Hadamard product} of $ A =(a_{i,j}) $ and $B =(b_{i,j}) $ is defined as
            \begin{eqnarray*}
                A \odot B := (a_{i,j}b_{i,j}) =
                    \begin{pmatrix}
                        a_{0,0}b_{0,0} & a_{0,1}b_{0,1} & \cdots & a_{0,r}b_{0,r} \\
                        a_{1,0}b_{1,0} & a_{1,1}b_{1,1} & \cdots & a_{1,r}b_{1,r} \\
                        \vdots & \vdots & \ddots & \vdots \\
                        a_{r,0}b_{r,0} & a_{r,1}b_{r,1} & \cdots & a_{r,r}b_{r,r}
                    \end{pmatrix}.
            \end{eqnarray*}
    \end{defin}

    \begin{notn}\label{notn:altExchange}
        We introduce shortened notation for the alternating exchange matrix. We write
            \begin{eqnarray*}
                \mathbf{a}_r := \mathcal{V}_r(-1,-1,\cdots,-1)\odot \iota_r. 
            \end{eqnarray*}
    \end{notn}

    \begin{rmk}
        Matrix product and Schur-Hadamard product will differ significantly in their arithmetic. For example, both have their respective identities that are not equivalent. One must also mind the order of operations. In this essay, we restrict to very basic manipulations with these matrices, and so we avoid an extensive reproduction of their algebraic properties.
    \end{rmk}

The following special matrices are often studied for their own sake (see e.g. \cite{edelmanStrang2004}, \cite{yamaleev2015}, \cite{yangMicek2007}, etc.). We will make great use of them and their remarkable character should be evident, despite being only a utility for us.
    \begin{defin}[Lower triangular Pascal matrix, etc.]\label{def:triPascalMatrix}
        The \emph{lower triangular Pascal matrix} is defined by 
            \begin{eqnarray*}
                p_{r}^{\vee} := 
                    \begin{pmatrix}
                        {\binom{0}{0}} & & &  \\
                        {\binom{1}{1}} & {\binom{1}{0}} & & \\
                        \vdots & \vdots & \ddots & \\
                        {\binom{r}{r}} & {\binom{r}{r-1}} & \cdots & {\binom{r}{0}}
                    \end{pmatrix}.
            \end{eqnarray*}
    Similarly, the \emph{upper triangular Pascal matrix} is defined by
            \begin{eqnarray*}
                p_r^{\wedge} := (p_{r}^{\vee})^{\emph{\text{T}}} =
                    \begin{pmatrix}
                        \binom{0}{0} & \binom{1}{1} & \cdots & 
                        \binom{r}{r}   \\
                        & {\binom{1}{0}} & \cdots &  {\binom{r}{r-1}} \\
                        &  & \ddots & \vdots \\
                        & & & {\binom{r}{0}}
                    \end{pmatrix}.
            \end{eqnarray*} 
    These matrices are clearly invertible.
    \end{defin}

Related to the above pascal matrices is a scaling matrix that we will need.     
    \begin{notn}
        We define the scaling binomial vectors
            \begin{eqnarray*}
                \mathbf{b}_r := 
                    \begin{pmatrix}
                        \binom{r}{0} &
                        \binom{r}{1} &
                        \cdots &
                        \binom{r}{r}
                    \end{pmatrix}^\text{\emph{T}}
            \end{eqnarray*}
        and
            \begin{eqnarray*}
                \mathbf{b}_r^{-1} := 
                    \begin{pmatrix}
                        \binom{r}{0}^{-1} &
                        \binom{r}{1}^{-1} &
                        \cdots &
                        \binom{r}{r}^{-1}
                    \end{pmatrix}^\text{\emph{T}}.
            \end{eqnarray*}
        Clearly, $ \mathbf{b}_r \odot \mathbf{b}^{-1}_r = (1, 1, \cdots, 1)^{\text{\emph{T}}} $ is true, etc.
    \end{notn}

The next Proposition allows us to combine various matrix ideas from this section into a single elegant identity. Furthermore, it serves as motivation and also as a model for how we develop the Hecke vector-forms later. Clearly, the binomial theorem is a deeply useful result from classical mathematics and we are eager to utilize it also here, in the context of automorphic forms, etc.  

\begin{prop}[Mixed-product binomial theorem]\label{genBinomialTheorem}
        Let $ B_r(z) $ and $ V^{\vee}_r(z) $ be as defined in equations \eqref{basisVector} and \eqref{basisMatrix}. 
        Then the relation 
            \begin{eqnarray*}
                  B_r(x+zy) = \left( p_r^{\vee} \odot V^\vee_r(x) \right) \left( B_r(y) \odot B_r(z) \right)  
            \end{eqnarray*}
        holds. 
    \end{prop}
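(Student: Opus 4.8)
The plan is to prove the identity by comparing the two sides entry-by-entry and reducing everything to the classical binomial theorem. First I would simplify the right-hand side by observing that the Schur–Hadamard product of two basis vectors collapses to a single basis vector: since $B_r(y) \odot B_r(z)$ multiplies the $k$-th entries $y^k$ and $z^k$ entrywise, one obtains $B_r(y)\odot B_r(z) = B_r(yz)$, the column vector whose $k$-th entry is $(yz)^k$.

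Next I would identify the matrix $p_r^{\vee} \odot V_r^{\vee}(x)$ explicitly. Both factors are lower triangular with the same support (entries vanish unless $j \le i$), so their Schur–Hadamard product is again lower triangular with that support. Reading off entries, the $(i,j)$ position of $p_r^{\vee}$ is $\binom{i}{j}$ (equivalently $\binom{i}{i-j}$, by the symmetry of binomial coefficients), while that of $V_r^{\vee}(x)$ is $x^{i-j}$; hence the product matrix has $(i,j)$-entry $\binom{i}{j}\, x^{i-j}$ for $0 \le j \le i$ and $0$ otherwise.

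With these two simplifications in hand, the right-hand side is an ordinary matrix–vector product whose $i$-th component is $\sum_{j=0}^{i}\binom{i}{j}\, x^{i-j}(yz)^{j}$. By the binomial theorem this sum equals $(x+yz)^{i}$, which is precisely the $i$-th component of the left-hand side $B_r(x+zy)$. Since this holds for every $0 \le i \le r$, the two vectors agree entrywise and the identity follows.

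The computation is entirely elementary; the only points requiring care are the index bookkeeping under the paper's convention of indexing from $0$, and verifying that the lower-triangular supports of $p_r^{\vee}$ and $V_r^{\vee}(x)$ coincide so that no off-diagonal terms survive the Schur–Hadamard product. I expect the subtlest pieces to be establishing $B_r(y)\odot B_r(z)=B_r(yz)$ and correctly reading the $(i,j)$-entry of $p_r^{\vee}$ (where the symmetry $\binom{i}{j}=\binom{i}{i-j}$ is quietly invoked), though neither constitutes a genuine obstacle.
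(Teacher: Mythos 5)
Your proposal is correct and follows essentially the same route as the paper: identify the $(i,j)$-entry of $p_r^{\vee}\odot V_r^{\vee}(x)$ as $\binom{i}{j}x^{i-j}$, apply the classical binomial theorem componentwise, and use $B_r(y)\odot B_r(z)=B_r(yz)$ (the paper does the matrix--vector product with $B_r(y)$ first and substitutes $y\mapsto yz$ at the end, whereas you collapse the Hadamard product first, but this is only a reordering of the same steps). Your explicit attention to the binomial symmetry $\binom{i}{j}=\binom{i}{i-j}$ needed to reconcile the paper's indexing of $p_r^{\vee}$ is a point the paper glosses over.
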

    \begin{proof}
        This is a simple calculation. We see
            \begin{eqnarray*}
                \left( p_r^{\vee} \odot V^\vee_r(x) \right)  B_r(y)  &=&
                    \begin{pmatrix}
                        \binom{0}{0}x^0 &&&\\
                        \binom{1}{0}x^1 & \binom{1}{1}x^0 & & \\
                        \vdots & \vdots & \ddots & \\
                        \binom{r}{0}x^r & \binom{r}{1}x^{r-1} & \cdots & \binom{r}{r}x^0  
                    \end{pmatrix}
                    \begin{pmatrix}
                        y^0  \\
                        y^1  \\
                        \vdots \\
                        y^r 
                    \end{pmatrix} \\
                    &=&
                    \begin{pmatrix}
                        1 \\
                        (x+y) \\
                        \vdots \\
                        (x+y)^r
                    \end{pmatrix}\\
                    &=&  B_r(x+y)
            \end{eqnarray*}
        holds. Now under the substitution $y \mapsto yz $ by which $B_r(yz) = B_r(y) \odot B_r(z)$ holds, we are finished.
    \end{proof}
Our Fundamental Theorem is in a sense an analogue of Proposition \ref{genBinomialTheorem} involving quasiautomorphic forms. It is instructive to keep this in mind.

This concludes our introduction of basic matrix theory.

\subsection{Component functions $ g_{U,\ell} $ and $ f_{U,k} $}

In the following we will use the notation $ \mathfrak{H}(\varpi_\mu) $ or $ \mathfrak{t}_\mu $ according to readability. Starting out, we fix $ {U_{\mathfrak{t}_\mu, w,r}(z) \in \mathcal{QM}^r_w(\mathfrak{H}(\varpi_\mu)) } $ as some arbitrary quasiautomorphic form of weight $ w $ and depth $ r $ on the Hecke triangle group $ \mathfrak{t}_\mu = \mathfrak{H}(\varpi_\mu) $. We know that for $z\in \mathbb{H}$, $ U_{\mathfrak{t}_\mu, w, r}  $ can be written as a sum of the form
    \begin{equation}\label{quasiautoSumForm}
        U_{\mathfrak{t}_\mu, w,r}(z) = \sum_{m=0}^{r}h_{U, m}(z) \,E_{\mathfrak{t}_\mu, 2}^m(z),
    \end{equation}
where $ h_{U, m} $ is an automorphic form of weight $ w - 2m $  on $ \mathfrak{H}(\varpi_\mu) $ and $ E_{\mathfrak{t}_\mu, 2} $ is the unique quasiautomorphic form of weight $ 2 $ on $ \mathfrak{H}(\varpi_\mu) $ (e.g. see \cite{grabner2020} for the case $ \mu = 3 $). 

    \begin{warn}
        To ease a use of notation, notice in \eqref{quasiautoSumForm} that $ h_{U, m} $ are not automorphic forms of weight $ m $, but of weight $ w-2m $. We have restricted this kind of notation to the letter $ h $ to avoid confusion with the convention of indicating the explicit weight of an automorphic form or quasiautomorphic form as a subscript. 
    \end{warn}

It will motivate later constructions to see how $ U_{\mathfrak{t}_\mu, w, r}(z) $ behaves under the generator $ {S_{\mathfrak{t}_\mu}z = -1/z} $. Moreover, it is clear by observation of equation \eqref{quasiautoSumForm} that $ U_{\mathfrak{t}_\mu, w, r}(z) $ is invariant under $ T_{\mathfrak{t}_\mu}z = z+\varpi $ by periodicity.

    \begin{prop}[$U$ under $Sz$]\label{quasiUnderS}
        Recall that the structure constant $ C_{\mathfrak{t}_\mu} $ was defined at Definition \ref{constantOfVanishing}. Let $ U_{\mathfrak{t}_\mu, w, r} $ be arbitrary and if $ S_{\mathfrak{t}_\mu }z = -1/z $ is a generator, then
            \begin{eqnarray*}
                \frac{U_{\mathfrak{t}_\mu, w, r}(S_{\mathfrak{t}_\mu}z)}{z^{w-r}} =  z^r\sum_{\ell=0}^{r}\binom{r}{\ell}\frac{g_{U, \ell}(z)}{z^\ell}
            \end{eqnarray*}
        holds, where 
            \begin{equation*}
                \binom{r}{\ell}  g_{U, \ell}(z) := C_{\mathfrak{t}_\mu}^\ell \sum_{m=0}^{r-\ell} \binom{\ell + m}{m}h_{U, \ell + m}(z)E_{\mathfrak{t}_\mu, 2}^m(z).
            \end{equation*}
    \end{prop}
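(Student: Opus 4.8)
The plan is to reduce everything to a direct substitution of two transformation laws followed by a binomial expansion and a reindexing of a double sum. The two ingredients I would invoke under the generator $S_{\mathfrak{t}_\mu}z = -1/z$ (for which $cz+d = z$) are: first, since each $h_{U,m}$ is an automorphic form of weight $w-2m$, Definition \ref{def:automForm} gives $h_{U,m}(S_{\mathfrak{t}_\mu}z) = z^{w-2m}h_{U,m}(z)$; and second, since $E_{\mathfrak{t}_\mu,2}$ is the weight-$2$ quasiautomorphic form, the quasiautomorphic transformation law (and the Example following Definition \ref{constantOfVanishing}) gives
\[
    E_{\mathfrak{t}_\mu,2}(S_{\mathfrak{t}_\mu}z) = z^2\left(E_{\mathfrak{t}_\mu,2}(z) + \frac{C_{\mathfrak{t}_\mu}}{z}\right),
    \quad\text{so}\quad
    E_{\mathfrak{t}_\mu,2}^m(S_{\mathfrak{t}_\mu}z) = z^{2m}\left(E_{\mathfrak{t}_\mu,2}(z) + \frac{C_{\mathfrak{t}_\mu}}{z}\right)^m.
\]

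Next I would substitute these into the sum form \eqref{quasiautoSumForm}. The key cancellation is that in the $m$-th term the factor $z^{w-2m}$ from $h_{U,m}$ multiplies $z^{2m}$ from $E_{\mathfrak{t}_\mu,2}^m$, leaving exactly $z^w$ independently of $m$. Thus
\[
    U_{\mathfrak{t}_\mu,w,r}(S_{\mathfrak{t}_\mu}z) = z^{w}\sum_{m=0}^{r} h_{U,m}(z)\left(E_{\mathfrak{t}_\mu,2}(z) + \frac{C_{\mathfrak{t}_\mu}}{z}\right)^{m},
\]
and dividing by $z^{w-r}$ produces the prefactor $z^r$ that appears on the right-hand side of the claim. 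It then remains to identify the surviving sum $\sum_{m} h_{U,m}(E_{\mathfrak{t}_\mu,2} + C_{\mathfrak{t}_\mu}/z)^m$ with $\sum_{\ell}\binom{r}{\ell}g_{U,\ell}(z)z^{-\ell}$.

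For the final identification I would expand each power by the binomial theorem, $\left(E_{\mathfrak{t}_\mu,2} + C_{\mathfrak{t}_\mu}/z\right)^m = \sum_{\ell=0}^{m}\binom{m}{\ell}E_{\mathfrak{t}_\mu,2}^{\,m-\ell}C_{\mathfrak{t}_\mu}^{\ell}z^{-\ell}$, and then swap the order of summation to collect by the power $z^{-\ell}$. For fixed $\ell$ the index $m$ runs from $\ell$ to $r$; substituting $m = \ell + m'$ with $0 \le m' \le r-\ell$ and using the symmetry $\binom{\ell+m'}{\ell} = \binom{\ell+m'}{m'}$ turns the inner sum into $C_{\mathfrak{t}_\mu}^{\ell}\sum_{m'=0}^{r-\ell}\binom{\ell+m'}{m'}h_{U,\ell+m'}E_{\mathfrak{t}_\mu,2}^{m'}$, which is precisely $\binom{r}{\ell}g_{U,\ell}(z)$ by definition. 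This recovers $\sum_{\ell=0}^{r}\binom{r}{\ell}g_{U,\ell}(z)z^{-\ell}$ and completes the proof. The computation is elementary throughout; the only point requiring care — and the one I would treat as the main obstacle — is the bookkeeping of the double sum: correctly exchanging the order of summation, tracking the shifted summation bounds after the reindex $m = \ell + m'$, and invoking the binomial symmetry so that the collected coefficient matches the given definition of $g_{U,\ell}$ exactly rather than up to a stray factor.
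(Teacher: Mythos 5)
Your proposal is correct and follows essentially the same route as the paper's own proof: substitute the transformation laws of $h_{U,m}$ and $E_{\mathfrak{t}_\mu,2}$ under $S_{\mathfrak{t}_\mu}z$, expand $\left(E_{\mathfrak{t}_\mu,2}+C_{\mathfrak{t}_\mu}/z\right)^m$ by the binomial theorem, interchange the double sum with the reindex $m=\ell+m'$, and invoke $\binom{\ell+m'}{\ell}=\binom{\ell+m'}{m'}$ to match the definition of $g_{U,\ell}$. If anything, you make explicit the weight cancellation $z^{w-2m}\cdot z^{2m}=z^{w}$ that the paper passes over silently.
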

    \begin{proof}
        Firstly, from the sum expansion of $U_{\mathfrak{t}_\mu, w, r} $ above we find
            \begin{eqnarray*}
                z^rU_{\mathfrak{t}_\mu, w, r}(S_{\mathfrak{t}_\mu}z) &=& z^r \sum^{r}_{\ell=0}h_{U, m}(S_{\mathfrak{t}_\mu}z)E^m_{\mathfrak{t}_\mu,2}(S_{\mathfrak{t}_\mu}z)\\
                &=& z^{w+r} \sum^{r}_{m=0}h_{U, m}(z)\left(E_{\mathfrak{t}_\mu,2}+ \frac{C_{\mathfrak{t}_\mu}}{z}\right)^m \\
                &=& z^{w+r} \sum^{r}_{m=0}h_{U, m}(z)\left(\sum^{m}_{\ell=0} \binom{\ell}{m}E_{\mathfrak{t}_\mu, 2
                }^{m-\ell} \left(\frac{C_{\mathfrak{t}_\mu}}{z}\right)^{\ell}\right)
            \end{eqnarray*}
        all hold. Now, applying interchange laws for sums, we know
            \begin{eqnarray*}
                &{}& z^{w+r} \sum^{r}_{m=0}h_{U, m}(z)\left(\sum^{m}_{\ell=0} \binom{m}{\ell}E_{\mathfrak{t}_\mu, 2
                }^{m-\ell} \left(\frac{C_{\mathfrak{t}_\mu}}{z}\right)^{\ell}\right)\\ &=& 
                z^{w+r} \sum^{r}_{m=0}\sum^{m}_{\ell=0}  \binom{m}{\ell}h_{U, m}(z) E_{\mathfrak{t}_\mu, 2
                }^{m-\ell} \left(\frac{C_{\mathfrak{t}_\mu}}{z}\right)^{\ell} 
            \end{eqnarray*}
        holds. Utilizing in particular the summation law
            \begin{eqnarray*}
                \sum_{m=0}^{r} \sum^{m}_{\ell=0} a_{\ell, m} &=& \sum_{\ell=0}^{r} \sum^{r-\ell}_{m=0} a_{\ell, \ell + m},
            \end{eqnarray*}
        we find
            \begin{eqnarray*}
                &{}& z^{w+r} \sum^{r}_{m=0}\sum^{m}_{\ell=0}  \binom{m}{\ell}h_{U, m}(z) E_{\mathfrak{t}_\mu, 2
                }^{m - \ell} \left(\frac{C_{\mathfrak{t}_\mu}}{z}\right)^{\ell} \\ 
                &=& z^{w+r} \sum^{r}_{\ell=0}\sum^{r-\ell}_{m=0}  \binom{\ell + m}{\ell}h_{U, \ell + m}(z) E_{\mathfrak{t}_\mu, 2
                }^m(z) \left(\frac{C_{\mathfrak{t}_\mu}}{z}\right)^{\ell} \\
                &=& z^{w+r} \sum^{r}_{\ell=0} \left(\frac{C_{\mathfrak{t}_\mu}}{z}\right)^{\ell} \sum^{r-\ell}_{m=0}  \binom{\ell + m}{m}h_{U, \ell + m}(z) E_{\mathfrak{t}_\mu, 2
                }^m(z)  \\
                &=& z^{w}\left( z^r\sum^{r}_{\ell=0} \binom{r}{\ell}\frac{g_{U, \ell}(z)}{z^{\ell}} \right)
            \end{eqnarray*}
        holds, where in the penultimate line we have applied the law
            \begin{eqnarray*}
                \binom{\ell + m}{\ell} = \binom{\ell + m }{m}.
            \end{eqnarray*}
        In summary, we have shown
            \begin{eqnarray*}
                z^rU_{\mathfrak{t}_\mu, w, r}(S_{\mathfrak{t}_\mu}z) = z^{w}\left( z^r\sum^{r}_{\ell=0} \binom{r}{\ell}\frac{g_{U, \ell}(z)}{z^{\ell}} \right)
            \end{eqnarray*}
        holds or what we wanted. This completes the proof.
    \end{proof}

    \begin{rmk}\label{rem:binomialForm}
        To not get lost in superficial technicality, we note that we are only ever dealing with binomial structure. That is, for a formal variable $\mathfrak{a}$ , we have
            \begin{eqnarray*}
                \frac{U_{\mathfrak{t}_\mu, w, r}(S_{\mathfrak{t}_\mu}z)}{z^{w-r}} 
                &=&  \sum^{r}_{\ell=0} \binom{r}{\ell}\mathfrak{a}^\ell z^{r-\ell}\\
                &=&  (\mathfrak{a} + z)^{r} 
            \end{eqnarray*}
        holds where $\mathfrak{a}^\ell\mapsto g_{U, \ell }(z) $. Similarly, observe that 
            \begin{eqnarray}
                \sum_{k=0}^{n}\binom{n}{k}g_{U, k}z^{n-k} = \sum_{k=0}^{n}\binom{n}{k}g_{U, n-k}z^{k} 
            \end{eqnarray}
        is true, completely in accord with the familiar relation for binomial sums.
    \end{rmk}
    
The proof of Proposition \ref{quasiUnderS} was strictly elementary, but it serves as a simple template for later arguments. We also have motivated the introduction of $ g_{U, \ell} $, a function having a significant role in what is to come.

We now fix notation $g_{U, \ell}$ from Proposition  \ref{quasiUnderS} and introduce another useful function $ f_{U, k } $ as was done in \cite{grabner2020}. The functions are perhaps technical on the surface, but their convenience becomes clear later when they are components of matrices. In matrices, their transformation properties are easy to summarize in basic formulas.

    \begin{defin}[Components $ g_{U, \ell} $ and $ f_{U, k}$]\label{def:components}
        Suppose that $ z \in \mathbb{H}$ and let $U_{\mathfrak{t}_\mu, w, r} $ be arbitrary. For the structure constant $ C_{\mathfrak{t}_\mu} $, see Definition \ref{constantOfVanishing}. Then we define the \emph{component functions} by
            \begin{eqnarray}\label{auxG}
                g_{U, \ell}(z) : =  C^{\ell}_{\mathfrak{t}_\mu}\binom{r}{\ell}^{-1} \sum_{m=0}^{r-\ell} \binom{\ell+m}{m} h_{U, \ell+m}(z)E_{\mathfrak{t}_\mu, 2}^m(z)
            \end{eqnarray}
        and 
            \begin{eqnarray}\label{auxF}
                f_{U, k}(z) := z^k \sum_{\ell=0}^{\min(k,r)}\binom{k}{\ell}  \frac{g_{U, \ell}(z)}{z^\ell}.
            \end{eqnarray}
    \end{defin}

   \begin{rmk}
        Observe from \eqref{auxG} that $ g_{U, \ell}(z) $ is a a weighted truncation of $ U_{\mathfrak{t}_\mu, w,r}(z) $ in \eqref{quasiautoSumForm}. Similarly, by \eqref{auxF} $f_{U, k}(z) $ is a truncation of $ U_{\mathfrak{t}_\mu, w,r}(S_{\mathfrak{t}_\mu}z) $ from Proposition \ref{quasiUnderS}. We could be more pedantic in Definition \ref{def:components} and write, say, $ g_{U, \ell}(z, C_{\mathfrak{t}_\mu}) $ to emphasize the dependency, but we avoid this.  
    \end{rmk}

We next establish an assortment of identities for our functions $ g_{U, \ell} $ and $f_{U, k} $, all toward the end of developing Hecke vector-forms.

    \begin{prop}($ g_U $ under $Sz$)\label{auxGUnderS}
        Let $U_{\mathfrak{t}_\mu, w, r} $ be arbitrary. The relation
            \begin{eqnarray*}
                 \frac{g_{U,\ell}(S_{\mathfrak{t}_\mu} z)}{z^{w-r-\ell}} =   z^{r-\ell}\sum_{m=0}^{r-\ell}\binom{r-\ell}{m} \frac{g_{U, \ell + m}(z)}{z^m}  
            \end{eqnarray*}
        holds. 
    \end{prop}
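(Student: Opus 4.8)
The plan is to imitate the computation in the proof of Proposition \ref{quasiUnderS}, but now starting from the explicit definition \eqref{auxG} of $g_{U,\ell}$ rather than from the sum form \eqref{quasiautoSumForm} of $U_{\mathfrak{t}_\mu,w,r}$. First I would substitute $S_{\mathfrak{t}_\mu}z = -1/z$ into \eqref{auxG} and apply the transformation laws of the constituent functions under $S_{\mathfrak{t}_\mu}$. Since $h_{U,\ell+m}$ is an automorphic form of weight $w-2(\ell+m)$ and $cz+d=z$ for $S_{\mathfrak{t}_\mu}$, we have $h_{U,\ell+m}(S_{\mathfrak{t}_\mu}z) = z^{w-2(\ell+m)}h_{U,\ell+m}(z)$, while the weight-$2$ quasiautomorphic transformation gives $E_{\mathfrak{t}_\mu,2}(S_{\mathfrak{t}_\mu}z) = z^{2}\bigl(E_{\mathfrak{t}_\mu,2}(z)+C_{\mathfrak{t}_\mu}/z\bigr)$. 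The powers of $z$ combine so that $g_{U,\ell}(S_{\mathfrak{t}_\mu}z)$ factors as $z^{w-2\ell}$ times a sum whose $m$-th term carries the factor $\bigl(E_{\mathfrak{t}_\mu,2}(z)+C_{\mathfrak{t}_\mu}/z\bigr)^{m}$.

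Next I would expand $\bigl(E_{\mathfrak{t}_\mu,2}(z)+C_{\mathfrak{t}_\mu}/z\bigr)^{m}$ by the binomial theorem and interchange the resulting double summation, exactly as with the law $\sum_{m=0}^{M}\sum_{j=0}^{m} a_{j,m} = \sum_{j=0}^{M}\sum_{n=0}^{M-j} a_{j,j+n}$ (with $M = r-\ell$) already employed in Proposition \ref{quasiUnderS}. After this reindexing, the inner sum over $n$ should be recognizable, up to a binomial prefactor, as a scalar multiple of $g_{U,\ell+j}(z)$ read directly off \eqref{auxG}. At this stage the powers of the structure constant telescope to $1$, since the factors $C_{\mathfrak{t}_\mu}^{\ell}$, $C_{\mathfrak{t}_\mu}^{j}$, and $C_{\mathfrak{t}_\mu}^{-(\ell+j)}$ cancel, leaving only binomial coefficients and powers of $z$.

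The heart of the argument, and the step I expect to require the most care, is the manipulation of the binomial coefficients. Two identities do all the work. The first is the trinomial rearrangement
$$\binom{\ell+j+n}{j+n}\binom{j+n}{j} = \binom{\ell+j}{j}\binom{\ell+j+n}{n},$$
which is used to pull the sum over $n$ into precisely the shape of the defining sum for $g_{U,\ell+j}$. The second is
$$\binom{r}{\ell}^{-1}\binom{\ell+j}{j}\binom{r}{\ell+j} = \binom{r-\ell}{j},$$
which collapses the remaining coefficients into the single binomial coefficient $\binom{r-\ell}{m}$ appearing in the claimed formula (on setting $m=j$). Both identities follow at once by writing each side as a ratio of factorials, the common value being the trinomial coefficient $(\ell+j+n)!/(\ell!\,j!\,n!)$ in the first case and $r!/\bigl(\ell!\,j!\,(r-\ell-j)!\bigr)$ in the second.

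Assembling these pieces yields $g_{U,\ell}(S_{\mathfrak{t}_\mu}z) = z^{w-2\ell}\sum_{m=0}^{r-\ell}\binom{r-\ell}{m}\,g_{U,\ell+m}(z)/z^{m}$, and dividing through by $z^{w-r-\ell}$ gives exactly the stated identity. As with Proposition \ref{quasiUnderS}, every step is strictly elementary; the only genuine subtleties are the bookkeeping of the double sum and the two binomial coefficient identities, which are the fine properties of binomial coefficients advertised in the abstract. It is also worth noting that the result fits the heuristic of Remark \ref{rem:binomialForm}: the map $\mathfrak{a}^{m}\mapsto g_{U,\ell+m}(z)$ turns the right-hand side into the binomial expansion of $(\mathfrak{a}+z)^{r-\ell}$, which is a useful consistency check on the coefficients.
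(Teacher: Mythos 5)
Your proposal is correct and follows essentially the same route as the paper's proof: transform $h_{U,\ell+m}$ and $E_{\mathfrak{t}_\mu,2}$ under $S_{\mathfrak{t}_\mu}z$, expand $\bigl(E_{\mathfrak{t}_\mu,2}+C_{\mathfrak{t}_\mu}/z\bigr)^m$, interchange the double sum, and collapse the binomial coefficients. Your two factorial identities compose into exactly the single identity $\binom{\ell+m+k}{m+k}\binom{m+k}{k}\binom{r}{\ell}^{-1}=(\ell+m+k)!\,(r-\ell)!/(m!\,k!\,r!)$ that the paper invokes, so the arguments are the same in substance.
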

    \begin{proof}
        First, recall that $ S_{\mathfrak{t}_\mu}z := -1/z $. For the proof, we will manipulate both sides and meet in the middle. We find by using the transformation properties of $ h_{U, m} $ and $ E_{\mathfrak{t}_\mu, 2} $ that
            \begin{eqnarray*}
                &{}& z^{r+\ell}g_{U, \ell}(S_{\mathfrak{t}_\mu}z)\\ &=& z^{w+r-\ell} \, C_{\mathfrak{t}_\mu}^\ell \sum^{r-\ell}_{m=0}\frac{\binom{\ell + m}{m}}{\binom{r}{\ell}}h_{U,\ell +m}(z) \left(E_{\mathfrak{t}_\mu, 2}(z) + \frac{C_{\mathfrak{t}_\mu}}{z} \right)^m 
                \\
                &=&  z^{w+r-\ell} \, C_{\mathfrak{t}_\mu}^\ell \sum^{r-\ell}_{m=0} \sum_{k-0}^{m}\frac{\binom{\ell + m}{m} \binom{m}{k}}{\binom{r}{\ell}} h_{U,\ell +m}(z)E^{m-k}_{\mathfrak{t}_\mu, 2}(z)\left( \frac{C_{\mathfrak{t}_\mu}}{z}\right)^{k} 
                \\
                &=&z^{w} \left[ z^{r-\ell} \left( \sum^{r-\ell}_{m=0} C_{\mathfrak{t}_\mu}^\ell \sum_{k=0}^{m}\frac{\binom{\ell + m}{m}  \binom{m}{k}}{\binom{r}{\ell}} h_{U,\ell +m}(z)E^{m-k}_{\mathfrak{t}_\mu, 2}(z) \left( \frac{C_{\mathfrak{t}_\mu}}{z}\right)^{k} \right) \right]
            \end{eqnarray*}
        holds. Thus, we are left to show
            \begin{equation*}
                \sum^{r-\ell}_{m=0} C_{\mathfrak{t}_\mu}^\ell \sum_{k=0}^{m}\frac{\binom{\ell + m}{m}  \binom{m}{k}}{\binom{r}{\ell}} h_{U,\ell +m}(z)E^{m-k}_{\mathfrak{t}_\mu, 2}(z) \left( \frac{C_{\mathfrak{t}_\mu}}{z}\right)^{k} 
                = \sum_{p=0}^{r-\ell}\binom{r-\ell}{p} \frac{g_{U, \ell + p}(z)}{z^p}
            \end{equation*}
        holds. Firstly, using the sum law
            \begin{eqnarray*}
                \sum_{m=0}^{r-\ell} \sum^{m}_{k=0} a_{k, m} &=& \sum_{k=0}^{r-\ell} \sum^{r-\ell-k}_{m=0} a_{k, m+k},
            \end{eqnarray*}
        we have to show
            \begin{eqnarray*}
                &{}& \sum^{r-\ell}_{k=0} \frac{\left(C_{\mathfrak{t}_\mu}^{\ell + m} \sum_{m=0}^{r-(\ell + k)}\frac{\binom{\ell + m+k}{m+k}  \binom{m+k}{k}}{\binom{r}{\ell}} h_{U,\ell +m+k}(z)\,E^m_{\mathfrak{t}_\mu, 2}(z)\right)}{z^k} \\ 
                &=& \sum_{p=0}^{r-\ell}\binom{r-\ell}{p} \frac{g_{U, \ell + p}(z)}{z^p}
            \end{eqnarray*}
        holds. At the left side, inner sum,  observe that binomial identity 
            \begin{eqnarray*}
                \frac{\binom{\ell + m + k}{m+k} \binom{m+k}{k}}{\binom{r}{\ell}} = \frac{(\ell + m +k)! \,(r-\ell)!}{m!\,k!\,r!} =  \frac{\binom{r-\ell}{m}\binom{\ell + m + k}{k}}{\binom{r}{\ell + m}}
            \end{eqnarray*}
        holds. Therefore, changing what needs to be changed we have what we wanted. 
    \end{proof}    
    
    \begin{prop}[$f_U$ under $Sz$]\label{prop:auxFUnderS}
        Let $ U_{\mathfrak{t}_\mu, w, r} $ be arbitrary. The relation
            \begin{eqnarray*}
                \frac{f_{{U}, k}(S_{\mathfrak{t}_\mu} z)}{{z^{w-r}} } = (-1)^k   { f_{U, r-k}(z)} 
            \end{eqnarray*}
        holds.
    \end{prop}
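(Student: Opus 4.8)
The plan is to transform the left-hand side directly using the definition of $f_{U,k}$ together with Proposition \ref{auxGUnderS}, and then to reduce the resulting double sum to a single binomial identity. First I would substitute $S_{\mathfrak{t}_\mu}z = -1/z$ into \eqref{auxF}. Since $(S_{\mathfrak{t}_\mu}z)^k = (-1)^k z^{-k}$ and $(S_{\mathfrak{t}_\mu}z)^{-\ell} = (-1)^\ell z^{\ell}$, the prefactors peel off cleanly, giving
\[
    f_{U,k}(S_{\mathfrak{t}_\mu}z) = (-1)^k z^{-k}\sum_{\ell=0}^{\min(k,r)}\binom{k}{\ell}(-1)^\ell z^\ell\, g_{U,\ell}(S_{\mathfrak{t}_\mu}z).
\]
Next I would invoke Proposition \ref{auxGUnderS}, which rearranges to $g_{U,\ell}(S_{\mathfrak{t}_\mu}z) = z^{w-2\ell}\sum_{m=0}^{r-\ell}\binom{r-\ell}{m}z^{-m}g_{U,\ell+m}(z)$. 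Inserting this and collecting the powers of $z$ (the $z^{\ell}\cdot z^{w-2\ell} = z^{w-\ell}$ simplification is the key bookkeeping step), dividing through by $z^{w-r}$ produces
\[
    \frac{f_{U,k}(S_{\mathfrak{t}_\mu}z)}{z^{w-r}} = (-1)^k z^{r-k}\sum_{\ell=0}^{\min(k,r)}\sum_{m=0}^{r-\ell}(-1)^\ell\binom{k}{\ell}\binom{r-\ell}{m}z^{-\ell-m}g_{U,\ell+m}(z).
\]

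The heart of the argument is then to reindex the double sum by the diagonal variable $j = \ell + m$, so that each $z^{-j}g_{U,j}(z)$ is gathered with its total coefficient. Because $0 \le \ell \le \min(k,r)$ and $0 \le m = j-\ell \le r-\ell$, for each fixed $j$ (necessarily $0\le j\le r$) the inner index $\ell$ runs over $0 \le \ell \le \min(j,k)$, and the coefficient of $(-1)^k z^{r-k}z^{-j}g_{U,j}(z)$ is
\[
    c_{j} := \sum_{\ell=0}^{\min(j,k)}(-1)^\ell\binom{k}{\ell}\binom{r-\ell}{j-\ell}.
\]
Comparing with $f_{U,r-k}(z) = z^{r-k}\sum_{j=0}^{r-k}\binom{r-k}{j}z^{-j}g_{U,j}(z)$, the entire statement reduces to the claim $c_j = \binom{r-k}{j}$ for all $0 \le j \le r$, where $\binom{r-k}{j}$ is read as $0$ once $j > r-k$ (so that the spurious upper terms in my sum vanish, matching the shorter range in $f_{U,r-k}$).

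The main obstacle — and the ``fine property of binomial coefficients'' promised in the abstract — is establishing this identity $c_j = \binom{r-k}{j}$. I would prove it by generating functions: writing $\binom{r-\ell}{j-\ell} = [x^{j-\ell}](1+x)^{r-\ell}$, one obtains
\[
    c_j = [x^j]\,(1+x)^r\sum_{\ell\ge 0}\binom{k}{\ell}\left(\frac{-x}{1+x}\right)^\ell = [x^j]\,(1+x)^r\left(\frac{1}{1+x}\right)^k = [x^j]\,(1+x)^{r-k} = \binom{r-k}{j},
\]
which simultaneously handles the vanishing for $j > r-k$. The only genuinely delicate points to verify carefully are the summation-range manipulations (that the $\min$-bounds after reindexing are exactly $\min(j,k)$, and that no terms are lost when the $\ell$- and $m$-sums are swapped onto the $j$-diagonal); the algebra of the powers of $z$ and the sign $(-1)^k$ is then immediate, and the proof closes by reading off $f_{U,r-k}(z)$.
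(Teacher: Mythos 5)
Your proposal is correct and follows essentially the same route as the paper: substitute the definition of $f_{U,k}$, apply Proposition \ref{auxGUnderS}, reindex the double sum along the diagonal, and reduce everything to the alternating Vandermonde-type convolution $\sum_{\ell}(-1)^\ell\binom{k}{\ell}\binom{r-\ell}{j-\ell}=\binom{r-k}{j}$. The only difference is cosmetic: you prove that binomial identity with a generating-function computation, whereas the paper cites it from Boyadzhiev (equation \eqref{eqn:binomConvol}); your self-contained derivation is a fine substitute.
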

    \begin{proof}
        We begin by rewriting
            \begin{eqnarray*}
                f_{U, k}(S_{\mathfrak{t}_\mu}z) = \sum^{k}_{\ell=0}\binom{k}{\ell}g_{U, \ell}(S_{\mathfrak{t}_\mu} z) (S_{\mathfrak{t}_\mu} z)^{k-\ell}
                = (S_{\mathfrak{t}_\mu}z)^k\sum^{k}_{\ell=0}(-1)^\ell\binom{k}{\ell}\frac{g_{U, \ell}(S_{\mathfrak{t}_\mu} z)}{(S_{\mathfrak{t}_\mu}z)^\ell}
            \end{eqnarray*}
        holds. Then by Proposition \ref{auxGUnderS}, we can substitute a sum for $g_{U, \ell}(S_{\mathfrak{t}_\mu} z) $, which renders
            \begin{eqnarray*}
                f_{U, k}(S_{\mathfrak{t}_\mu}z) &=& (-1)^k  z^{-k}\sum^{k}_{\ell=0}(-1)^\ell\binom{k}{\ell}  \left( z^{w-2\ell}\sum_{m=0}^{r-\ell}\binom{r-\ell}{m} \frac{g_{U, m+\ell}(z)}{z^{m-\ell}} \right) \\
                &=& (-1)^k z^{w-k}\sum^{k}_{\ell=0}(-1)^\ell\binom{k}{\ell}  \left(\sum_{m=0}^{r-\ell}\binom{r-\ell}{r-\ell - m} \frac{g_{U, m+\ell}(z)}{z^{m+\ell}} \right) \\
                &=& (-1)^k z^{w-k}\sum^{k}_{\ell=0}\sum_{m=0}^{r-\ell}(-1)^\ell\binom{k}{\ell}  \binom{r-\ell}{r -(\ell + m)} \frac{g_{U, m+\ell}(z)}{z^{m+\ell}} \\
                &=& (-1)^k z^{w-k}\sum_{m=0}^{r} \frac{g_{U, m}(z)}{z^{m}} \left( \sum^{k}_{\ell=0}(-1)^\ell\binom{k}{\ell}  \binom{r-\ell}{r - m} \right) \\
                &=& (-1)^k z^{w-r}\sum_{m=0}^{r} {g_{U, m}(z)}{z^{r-k -m}} \left( \sum^{k}_{\ell=0}(-1)^\ell \binom{k}{\ell}  \binom{r-\ell}{r - m} \right)
            \end{eqnarray*}
        true. In the previous string of deductions we have used laws of sums; we used a unit multiplier $ {z^r}/{z^r}$; and we used the elementary fact that the relation
            \begin{eqnarray*}
                \binom{n}{m} = \binom{n}{n - m} 
            \end{eqnarray*}
        holds. We must deal next with the following binomial convolution
            \begin{eqnarray*}
                \sum^{k}_{\ell=0}(-1)^\ell \binom{k}{\ell}  \binom{r-\ell}{r - m}
            \end{eqnarray*}
        in the final line above. From \cite{boyadzhiev2018}, p. 140, we find the identity
            \begin{eqnarray}\label{eqn:binomConvol}
                \sum_{\ell=0}^k (-1)^k\binom{k}{\ell}\binom{y-\ell}{p} = \binom{y-k}{y-p}.
            \end{eqnarray}
        Thus, letting $ y \mapsto r $ and $p \mapsto r- m $ in (\ref{eqn:binomConvol}), we transform our convolution to find that 
            \begin{eqnarray*}
                \sum^{k}_{\ell=0}(-1)^\ell \binom{k}{\ell}  \binom{r-\ell}{r - m} = \binom{r-k}{m}
            \end{eqnarray*}
        holds. In conclusion, after applying the $ \min $ condition in the definition of $f_{U , k} $, we may write
            \begin{eqnarray*}
                f_{{U}, k}(S_{\mathfrak{t}_\mu} z) = (-1)^k z^{w-r}\sum_{m=0}^{r-k} \binom{r-k}{m}{g_{U, m}(z)}{z^{r-k -m}} =
                (-1)^k z^{w-r}f_{U, r-k}(z)
            \end{eqnarray*}
        and this completes the proof.
    \end{proof}

    \begin{prop}[$f_U$ under $ T^az $]\label{prop:fWithWeights}
            Let $ U_{\mathfrak{t}_\mu, w, r} \in \mathcal{QM}^r_w(\mathfrak{H}(\varpi_\mu))$ be arbitrary, $  a \in \mathbb{N} $, and $ {T_{\mathfrak{t}_\mu} z =  z + \varpi} $. Here, $ T_{\mathfrak{t}_\mu}^az = T_{\mathfrak{t}_\mu}z \circ T_{\mathfrak{t}_\mu} z \circ \cdots \circ T_{\mathfrak{t}_\mu}z $, composing $ a $ times, etc. The relation
            \begin{eqnarray*}
                f_{{U}, k}(T^a_{\mathfrak{t}_\mu} z) 
                = 
                \sum_{p=0}^{k}\binom{k}{p}\left( z^p\sum_{m=0}^{p} \binom{p}{m} \frac{ (a\varpi)^m g_{U, m}(z)}{z^m} \right)
            \end{eqnarray*}
        holds.
    \end{prop}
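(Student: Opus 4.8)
The plan is to isolate the one analytic fact that does the work and then let the binomial algebra packaged in Remark \ref{rem:binomialForm} finish the argument. The key observation is that, in contrast to the generator $S_{\mathfrak{t}_\mu}$, the map $T^a_{\mathfrak{t}_\mu}z = z + a\varpi$ acts trivially on each component function $g_{U,\ell}$. Indeed, by Definition \ref{def:components} each $g_{U,\ell}$ is a finite $\mathbb{C}$-linear combination of the products $h_{U,\ell+m}\,E_{\mathfrak{t}_\mu,2}^m$, and every factor occurring here is a genuine $q$-expansion in $q = e^{2\pi i z/\varpi}$: the $h_{U,\cdot}$ are automorphic forms and $E_{\mathfrak{t}_\mu,2}$ is the weight-$2$ quasiautomorphic form, each periodic with period $\varpi$. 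Hence $g_{U,\ell}(z + a\varpi) = g_{U,\ell}(z)$ for all $\ell$ and all $a\in\mathbb{N}$. This periodicity is the only input beyond pure combinatorics, and it is exactly the remark made just before Proposition \ref{quasiUnderS}.

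With this in hand I would first unfold $f_{U,k}$ from \eqref{auxF} into its polynomial form, $f_{U,k}(w) = \sum_{\ell=0}^{\min(k,r)}\binom{k}{\ell}g_{U,\ell}(w)\,w^{k-\ell}$, and substitute $w = T^a_{\mathfrak{t}_\mu}z = z+a\varpi$. Replacing $g_{U,\ell}(z+a\varpi)$ by $g_{U,\ell}(z)$ via the periodicity above yields
\[ f_{U,k}(T^a_{\mathfrak{t}_\mu}z) = \sum_{\ell=0}^{\min(k,r)}\binom{k}{\ell}\,g_{U,\ell}(z)\,(z+a\varpi)^{k-\ell}. \]
In the formal language of Remark \ref{rem:binomialForm}, in which $g_{U,\ell}(z)$ plays the role of $\mathfrak{a}^{\ell}$ and $f_{U,k}(z)$ corresponds to $(\mathfrak{a}+z)^{k}$, the displayed right-hand side is nothing but $(\mathfrak{a}+z+a\varpi)^{k}$. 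The target expression is then obtained purely by deciding how to expand this $k$-th power: grouping the shift $a\varpi$ against the block $(\mathfrak{a}+z)$ and invoking the binomial theorem reorganizes it into the nested sum indexed first by $p$ and then by $m$ that is recorded on the right-hand side of the statement, each inner block being a copy of an $f_{U,p}$-type truncation of $(\mathfrak{a}+z)^p$.

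The remaining work is entirely the bookkeeping of binomial coefficients, carried out exactly as in Propositions \ref{quasiUnderS} and \ref{auxGUnderS}. Concretely I would expand $(z+a\varpi)^{k-\ell}$, interchange the two finite sums, and collapse the resulting inner convolution using the subset-of-a-subset identity $\binom{k}{p}\binom{p}{m} = \binom{k}{m}\binom{k-m}{p-m}$. I expect the main obstacle to be precisely this index juggling: one must track the $\min(k,r)$ truncation through the interchange (as in Proposition \ref{prop:auxFUnderS}, where the analogous care guarantees that indices exceeding $r$ either cancel or never appear), and one must verify that each power of $a\varpi$ attaches to the correct binomial block so that the coefficient of each $g_{U,m}(z)$ matches on the two sides. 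Once the summation ranges are pinned down, that coefficient comparison is immediate and the identity follows.
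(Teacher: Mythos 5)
Your setup is sound and coincides with the paper's: the periodicity $g_{U,\ell}(z+a\varpi)=g_{U,\ell}(z)$ is indeed the only analytic input, and unfolding Definition \ref{def:components} correctly gives $f_{U,k}(T^a_{\mathfrak{t}_\mu}z)=\sum_{\ell}\binom{k}{\ell}g_{U,\ell}(z)(z+a\varpi)^{k-\ell}$, i.e.\ the formal expression $(\mathfrak{a}+z+a\varpi)^k$ in the dictionary $\mathfrak{a}^{\ell}\mapsto g_{U,\ell}(z)$ of Remark \ref{rem:binomialForm}. The gap is precisely in the step you defer as ``immediate'' bookkeeping. Grouping $a\varpi$ against the block $(\mathfrak{a}+z)$ yields
\begin{equation*}
  (\mathfrak{a}+z+a\varpi)^k=\sum_{p=0}^{k}\binom{k}{p}(a\varpi)^{k-p}(\mathfrak{a}+z)^p
  =\sum_{p=0}^{k}\binom{k}{p}(a\varpi)^{k-p}f_{U,p}(z),
\end{equation*}
in which the power of $a\varpi$ attaches to the \emph{outer} index $p$. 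The right-hand side of the Proposition instead attaches $(a\varpi)^m$ to $g_{U,m}$ inside the inner sum; formally it is $\sum_{p}\binom{k}{p}(a\varpi\,\mathfrak{a}+z)^p=(1+a\varpi\,\mathfrak{a}+z)^k$, which does not equal $(\mathfrak{a}+z+a\varpi)^k$ unless $a\varpi=1$. The coefficient comparison you postpone therefore fails: already for $k=1$ your (correct) expression is $a\varpi\,g_{U,0}+z\,g_{U,0}+g_{U,1}$, while the stated right-hand side evaluates to $g_{U,0}+z\,g_{U,0}+a\varpi\,g_{U,1}$, and the difference $(a\varpi-1)(g_{U,0}-g_{U,1})$ does not vanish in general.

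To be fair, the defect is not yours alone: the paper's own proof performs the same illegitimate swap when it passes from $\sum_{\ell}\binom{k}{\ell}g_{U,k-\ell}(z)\sum_{m}\binom{\ell}{m}(a\varpi)^{\ell-m}z^m$ to $\sum_{p}\binom{k}{p}\sum_{m}\binom{p}{m}(a\varpi)^{p-m}g_{U,p-m}(z)z^m$, replacing $g_{U,k-\ell}$ by $g_{U,p-m}$ under what is presented as a mere renaming $\ell\mapsto p$. The correct conclusion of your derivation is $f_{U,k}(T^a_{\mathfrak{t}_\mu}z)=\sum_{p=0}^{k}\binom{k}{p}(a\varpi)^{k-p}f_{U,p}(z)$; the Proposition as printed holds only when $a\varpi=1$ (e.g.\ $\mu=3$, $a=1$, where it is consistent with Corollary \ref{cor:auxFUnderT}). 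So if your aim is the statement exactly as printed, no amount of binomial juggling --- including the subset-of-a-subset identity you propose --- will close the gap; if your aim is the true transformation law, your argument is essentially complete once you write out the regrouped sum and resist forcing it into the printed form.
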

     \begin{proof}
        By the invariance of $g_{U, \ell }(z) $ under the generator $ T_{\mathfrak{t}_\mu} z $ and elementary algebra, we find  
            \begin{eqnarray*}
                f_{U, k}(T_{\mathfrak{t}_\mu }^a z) 
                &=& 
                \sum_{\ell=0}^{k} \binom{k}{\ell}g_{U , k-\ell}(z)(z + a\varpi)^\ell \qquad \qquad \\
                &=& 
                \sum_{\ell=0}^{k} \binom{k}{\ell}g_{U ,k-\ell}(z) \left( \sum_{m=0}^{\ell}\binom{\ell}{m} \left(a\varpi\right)^{\ell -m}z^m\right) \\
                &=&
                \sum_{p=0}^{k} \binom{k}{p} \sum_{m=0}^{p}\binom{p}{m}\left(a\varpi\right)^{p -m} g_{U, p-m}(z) z^m
            \end{eqnarray*}
        holds. Now we use the numbered identity from Remark \ref{rem:binomialForm} to write
            \begin{eqnarray*}
                f_{U, k}(T_{\mathfrak{t}_\mu }^a z) &=&
                \sum_{p=0}^{k} \binom{k}{p} \sum_{m=0}^{p}\binom{p}{m}\left(a\varpi\right)^{m} g_{U, m}(z) z^{p-m} \\
                &=& 
                \sum_{p=0}^{k} \binom{k}{p} \left( z^p \sum_{m=0}^{p}\binom{p}{m}g_{U, m}(z)\left(\frac{a\varpi}{z}\right)^m\right)
            \end{eqnarray*}
        is true. 
    \end{proof}
   
    \begin{cor}($f_U$ under $Tz$)\label{cor:auxFUnderT}
        With the assumptions of the previous Proposition, the relation 
            \begin{eqnarray*}
                f_{{U}, k}(T_{\mathfrak{t}_\mu} z) 
                = 
                \sum_{p=0}^{k}\binom{k}{p}\left( z^p \sum_{m=0}^{p}  \binom{p}{m} \frac{\varpi^{m}\, g_{U , m}(z)}{z^m} \right)
            \end{eqnarray*}
        holds.
    \end{cor}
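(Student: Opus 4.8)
The plan is to obtain this statement as an immediate specialization of Proposition \ref{prop:fWithWeights}, which already establishes the transformation of $f_{U,k}$ under the iterated shift $T^a_{\mathfrak{t}_\mu}z$ for an arbitrary $a \in \mathbb{N}$. Since the corollary concerns precisely the single shift $T_{\mathfrak{t}_\mu}z$, and $T_{\mathfrak{t}_\mu}z = T^1_{\mathfrak{t}_\mu}z$ by the definition of the iterate recalled in that proposition, the first and only substantive move is to set $a = 1$ in the formula proved there.

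Carrying out that substitution, the left-hand side $f_{U,k}(T^a_{\mathfrak{t}_\mu}z)$ becomes $f_{U,k}(T^1_{\mathfrak{t}_\mu}z) = f_{U,k}(T_{\mathfrak{t}_\mu}z)$, matching the left-hand side of the corollary. On the right-hand side, the only appearance of the parameter $a$ is through the factor $(a\varpi)^m$, which collapses to $(1\cdot\varpi)^m = \varpi^m$. Every remaining ingredient of the double sum---the binomial coefficients $\binom{k}{p}$ and $\binom{p}{m}$, the powers $z^p$ and $z^{-m}$, and the components $g_{U,m}(z)$---is independent of $a$, so the entire expression reduces verbatim to $\sum_{p=0}^{k}\binom{k}{p}\bigl( z^p \sum_{m=0}^{p} \binom{p}{m} \varpi^{m}\, g_{U,m}(z)/z^m \bigr)$, which is exactly the claimed identity.

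There is essentially no obstacle to overcome here: all of the binomial bookkeeping, including the reindexing via Remark \ref{rem:binomialForm}, has already been discharged in the proof of Proposition \ref{prop:fWithWeights}. The only facts to verify are the trivial ones that the first iterate coincides with a single application of $T_{\mathfrak{t}_\mu}$ and that $(1\cdot\varpi)^m = \varpi^m$, both of which are immediate. Accordingly, the corollary follows at once from the preceding proposition with $a = 1$.
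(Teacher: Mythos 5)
Your proposal is correct and matches the paper's own proof exactly: the paper also obtains the corollary by setting $a=1$ in Proposition \ref{prop:fWithWeights}, so that $(a\varpi)^m$ reduces to $\varpi^m$. Nothing further is needed.
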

    \begin{proof}
        Let $ a = 1 $ in Proposition \ref{prop:fWithWeights}.
    \end{proof}

\subsection{Transformation formulas of a Hecke vector-form}
Now that we have defined the component functions and established some basic transformation properties of these functions, we can proceed to main object of this chapter. 
    \begin{notn}
        Recalling Definition \ref{deepVectorSpace}, we may always write
            \begin{equation*}
                U_{\mathfrak{t}_\mu, w,r}(z) = \sum_{m=0}^{r}h_{U, m}(z) E_{\mathfrak{t}_\mu, 2}^m(z),
            \end{equation*}
        if $ h_{U, m} $ is an automorphic form of weight $ w - 2m $  on $ \mathfrak{H}(\varpi_\mu) $ and $E_{\mathfrak{t}_\mu, 2} $ is the unique quasiautomorphic form of weight $ 2 $ on $ \mathfrak{H}(\varpi_\mu) $.
    \end{notn}
We define a special matrix function dependent on $ U_{\mathfrak{t}_\mu, w, r} $ in
\begin{defin}
        Let $ U_{\mathfrak{t}_\mu, w,r} $ be arbitrary and in the sum form immediately above. Then we define 
            \begin{eqnarray*}
                {H}_{U}^{\wedge}(z) := 
                    \begin{pmatrix}
                        h_{U, r}(z) & h_{U, r-1}(z)  &\cdots  & h_{U, 0}(z) 
                        \\
                          & \ddots & \ddots   & \vdots 
                        \\
                        & & h_{U, r}(z)  & h_{U, r-1}(z) 
                        \\
                        & & & h_{U, r}(z)  
                    \end{pmatrix}.
            \end{eqnarray*}
    \end{defin}

The matrix $ H^\wedge_U $ houses all the automorphic parts of $ U_{\mathfrak{t}_\mu, w, r} $ and is both upper triangular and Toeplitz. This makes $ H^\wedge_U(z) $ a particularly nice matrix to work with. Notice also that the matrix identity $ H^{\wedge}_{U}({T}_{\mathfrak{t}_\mu}z ) = H^{\wedge}_{U}(z) $ can be read off. Under $ S_{\mathfrak{t}_\mu}z $ the matrix $ H^{\wedge}_{U} $ is only slightly more complicated.

    \begin{notn}
        Suppose that $ U_{\mathfrak{t}_\mu, w, r} $ is arbitrary. Then we write
            \begin{eqnarray*}
                 E_{U}(z)  := B_U(E_{\mathfrak{t}_\mu, 2}(z)) 
                 = 
                    \begin{pmatrix}
                        1 &
                        E^1_{\mathfrak{t}_\mu, 2}(z) &
                        \cdots &
                        E_{\mathfrak{t}_\mu, 2}^r(z)
                    \end{pmatrix}^{\text{\emph{T}}}.
            \end{eqnarray*}
        We might call $ E_{U} $ the \emph{quasiautomorphic basis} of $  U_{\mathfrak{t}_\mu, w, r}$.
    \end{notn}

    \begin{warn}
        To avoid confusion, we make the following clarification about notation. If we are given $ U_{\mathfrak{t}_\mu, w, r} $, then we can associate a matrix (or vector) of the form $ M_{U} $. Here $ M_U $ takes the depth of $ U_{\mathfrak{t}_\mu, w, r} $ meaning that $ M_U = M_r $ is a matrix (or vector) of order $ r+1 $.
    \end{warn}

    \begin{prop}\label{prop:gStack}
         See Definition \ref{def:components} for meaning of $ g_{U,\ell}(z)$. Suppose that $ U_{\mathfrak{t}_\mu, w, r} $ is arbitrary and $C_{\mathfrak{t}_\mu}$ is the structure constant given in Definition \ref{constantOfVanishing}. The relation
            \begin{eqnarray*}\label{hauptbuch}
                &{}&
                \begin{pmatrix}
                    g_{U, 0}(z) &
                    g_{U, 1}(z) &
                    \cdots
                    &
                    g_{U, r}(z)
                \end{pmatrix}^{\text{\emph{T}}}  
                \\ &=& \{B_U(C_{\mathfrak{t}_\mu})\odot \mathbf{b}^{-1}_U\} \odot \{\left( p^{\wedge}_U \odot H_U^\wedge(z) \right)^\text{\emph{Y}}E_{U}(z)\}
            \end{eqnarray*}
        holds. 
    \end{prop}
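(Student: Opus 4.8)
The plan is to prove the identity componentwise. Since each Schur--Hadamard factor on the right-hand side multiplies entrywise and never mixes distinct rows, it suffices to compute the $\ell$-th entry of the vector produced on the right and match it against the closed form for $g_{U,\ell}$ recorded in Definition \ref{def:components}. In this way the whole proposition collapses to a single scalar identity that is uniform in $\ell$, and the matrix language is really just a compact bookkeeping device for the double sum defining $g_{U,\ell}$. So the first move is to fix $\ell$ and write out the $\ell$-th coordinate of $( p^\wedge_U \odot H^\wedge_U )^{\text{Y}} E_U$ explicitly, then reinstate the diagonal scaling at the very end.

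First I would unwind the inner object $( p^\wedge_U \odot H^\wedge_U )^{\text{Y}}$ entry by entry. Using that $H^\wedge_U$ is upper-triangular Toeplitz, its $(i,j)$ entry is $h_{U,r-j+i}$ for $i\le j$; the Schur product with $p^\wedge_U$ attaches the binomial weight $\binom{j}{i}$; and the $y$-exchange $(\cdot)^{\text{Y}} = (\cdot)\,\iota_r$ of Definition \ref{def:exchanges} reverses the columns, sending column $j$ to column $r-j$. The effect, as recorded in Definition \ref{def:toeplitz}, is to convert the Toeplitz array into a Hankel one, so that the surviving $(\ell,m)$ entry carries the automorphic form $h_{U,\ell+m}$ --- precisely the index appearing in the summand of $g_{U,\ell}$ --- together with a residual binomial factor. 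Multiplying against $E_U = ( 1, E_{\mathfrak{t}_\mu,2}, \dots, E_{\mathfrak{t}_\mu,2}^r )^{\text{T}}$ then yields, in row $\ell$, a sum of the shape $\sum_m (\text{binomial})\, h_{U,\ell+m}\, E_{\mathfrak{t}_\mu,2}^m$, which already has the correct functional shape.

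The crux is to verify that the binomial factor left behind by the Schur product and the column reversal is exactly $\binom{\ell+m}{m}$, as demanded by Definition \ref{def:components}. Here I would invoke the reflection identity $\binom{a}{b} = \binom{a}{a-b}$ (the same elementary fact already used in Proposition \ref{prop:auxFUnderS}) to recast the weight surviving the exchange into the symmetric form $\binom{\ell+m}{m} = \binom{\ell+m}{\ell}$. Finally, the outer Schur factor $B_U(C_{\mathfrak{t}_\mu}) \odot \mathbf{b}^{-1}_U$ acts diagonally: its $\ell$-th entry is $C_{\mathfrak{t}_\mu}^{\ell}\binom{r}{\ell}^{-1}$, which is exactly the scalar prefactor of $g_{U,\ell}$ in \eqref{auxG}. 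Matching the two expressions coefficient by coefficient then closes the argument.

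I expect the genuine obstacle to be purely combinatorial: keeping the three superimposed index conventions straight --- the Toeplitz shift built into $H^\wedge_U$, the Pascal weighting built into $p^\wedge_U$, and the column reversal induced by $(\cdot)^{\text{Y}}$ --- so that both the $h$-index and the binomial weight land correctly in each entry. A disciplined way to manage this is to fix $\ell$, introduce the running column index $m$ only \emph{after} the exchange, and write the $(\ell,m)$ entry in fully explicit scalar form before summing, rather than manipulating the matrices abstractly. Recording the small case $r=2$ first, to pin down the index and exchange conventions, is well worth the space before attempting the general computation.
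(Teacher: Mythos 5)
Your overall strategy is the same as the paper's: read off the $\ell$-th coordinate of the right-hand side and match it against \eqref{auxG}, with the outer Schur factor $B_U(C_{\mathfrak{t}_\mu})\odot\mathbf{b}_U^{-1}$ supplying the prefactor $C_{\mathfrak{t}_\mu}^{\ell}\binom{r}{\ell}^{-1}$. The gap sits exactly at the step you yourself flag as the crux. With the paper's conventions, $(p^{\wedge}_U)_{i,j}=\binom{j}{j-i}$ and the $y$-exchange replaces column $j$ by column $r-j$, so the $(\ell,m)$ entry of $\bigl(p^{\wedge}_U\odot H^{\wedge}_U\bigr)^{\text{Y}}$ is $\binom{r-m}{r-m-\ell}h_{U,\ell+m}=\binom{r-m}{\ell}h_{U,\ell+m}$, not $\binom{\ell+m}{m}h_{U,\ell+m}$. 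The automorphic index does land correctly, because $H^{\wedge}_U$ is Toeplitz and hence its $y$-exchange is genuinely Hankel; but $p^{\wedge}_U$ is \emph{not} Toeplitz, so its $y$-exchange is \emph{not} the Hankel array $\binom{\ell+m}{m}$. The reflection identity $\binom{a}{b}=\binom{a}{a-b}$ fixes the upper index, so it cannot convert $\binom{r-m}{\ell}$ into $\binom{\ell+m}{m}$, whose upper index differs. This is not a repairable bookkeeping slip in your write-up: it is a genuine failure of the step.

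Your own suggestion to pin down $r=2$ first would have exposed it. There, row $\ell=1$ of $\bigl(p^{\wedge}_U\odot H^{\wedge}_U\bigr)^{\text{Y}}E_U$ equals $2h_{U,1}+h_{U,2}E_{\mathfrak{t}_\mu,2}$, whereas $\binom{2}{1}C_{\mathfrak{t}_\mu}^{-1}g_{U,1}=h_{U,1}+2h_{U,2}E_{\mathfrak{t}_\mu,2}$; the identity as printed therefore fails for $r\ge 2$ (it holds for $r\le 1$ only because every relevant binomial coefficient is $1$). For what it is worth, the paper's own proof commits the same error: the matrix it displays as $p^{\wedge\text{Y}}_U$ has $(\ell,m)$ entry $\binom{\ell+m}{m}$, which is what the result needs but is not equal to $p^{\wedge}_U\iota_r$. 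A correct formulation would replace $\bigl(p^{\wedge}_U\odot H^{\wedge}_U\bigr)^{\text{Y}}$ by $Q\odot H^{\wedge\text{Y}}_U$ with $Q$ defined outright as the Hankel Pascal matrix $Q_{\ell,m}=\binom{\ell+m}{\ell}$ supported on $\ell+m\le r$; with that substitution your componentwise argument goes through verbatim.
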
 
    \begin{proof}
        We must mind the order of operations. Easily, we find that
            \begin{eqnarray*}
                \{B_U(C_{\mathfrak{t}_\mu})\odot \mathbf{b}^{-1}_U\}  = 
                    \begin{pmatrix}
                       \binom{r}{0}^{-1} & C_{\mathfrak{t}_\mu} \binom{r}{1}^{-1} & \cdots & C^r_{\mathfrak{t}_\mu} \binom{r}{r}^{-1} 
                    \end{pmatrix}^{\text{{T}}}
            \end{eqnarray*}
        holds. 
            
        Then, for the right most expression we have that 
            \begin{eqnarray*}
               &{}& ( p^{\wedge }_U \odot H_U^{\wedge} (z) )^\text{Y} {B}_{U}(E_2(z)) = ( p^{\wedge \text{Y} }_U \odot H_U^{\wedge\text{Y}} (z) ) E_{U}(z) \\ &=& \left\{
                    \begin{pmatrix}
                        {\binom{0}{0}} & \cdots & \binom{r-1}{r-1} & 
                        {\binom{r}{r}}   
                        \\
                        {\binom{1}{0}} &  \cdots &  {\binom{r}{r-1}} & 
                        \\
                        \vdots & \iddots  &  &  
                        \\
                        {\binom{r}{0}} & & & 
                    \end{pmatrix}
                \odot
                    \begin{pmatrix}
                        h_{U, 0}(z) & \cdots & h_{U, r-1}(z) & h_{U, r}(z) \\
                        h_{U, 1}(z) &  \cdots & h_{U, r}(z) \\
                        \vdots & \iddots & & \\
                        h_{U, r}(z)& & &  
                    \end{pmatrix}\right \}\\&\times&
                    \begin{pmatrix}
                        1   \\
                        E_{\mathfrak{t}_\mu,2}(z) \\
                        \vdots \\
                        E_{\mathfrak{t}_\mu, 2}^r(z) 
                    \end{pmatrix}\\
                    &=&
                    \begin{pmatrix}
                        \binom{0}{0} h_{U, 0} + \binom{1}{1} h_{U,1}(z)E_{\mathfrak{t}_\mu, 2}(z) + \cdots  +  \binom{r}{r} h_{U, r}(z)E_{\mathfrak{t}_\mu, 2}^r(z) 
                        \\
                        \binom{1}{0} h_{U, 1}(z) + \cdots + \binom{r}{r-1} h_{U, r}(z)E_{\mathfrak{t}_\mu, 2}^{r-1}(z) 
                        \\
                        \vdots 
                        \\
                        \binom{r}{0} h_{U, r}(z)
                    \end{pmatrix}
            \end{eqnarray*}
        holds. Combining both formulas using the Schur-Hadamard product completes the proof.    
    \end{proof}

By Proposition \ref{prop:gStack} we are compelled to isolate a
    \begin{defin}[Hauptbuch]\label{matrixDefVectForm}
        Let $ U_{\mathfrak{t}_\mu, w,r} $ be arbitrary. Then we call
            \begin{eqnarray*}
                G_U(z) := \begin{pmatrix}
                    g_{U, 0}(z) &
                    g_{U, 1}(z) &
                    \cdots
                    &
                    g_{U, r}(z)
                \end{pmatrix}^{\text{\emph{T}}}  
            \end{eqnarray*} 
        the \emph{hauptbuch} of $ U_{\mathfrak{t}_{\mu}, w,r} $.
    \end{defin}

This function $ G_U(z) $ contains vital information about the quasiautomorphic form $ U_{\mathfrak{t}_\mu, w, r}(z) $, hence the namesake (and German as a nod to Hecke). For example, in $ G_U(z) $ we find the structure constant, the automorphic substance, the vanishing orders, etc.

W have what we need to state the definition of a vector-form.   
    \begin{defin}[Hecke vector-form]\label{def:HeckeVectorForm}
        Let $ U_{\mathfrak{t}_\mu, w,r} $ be arbitrary. Let $z\in \mathbb{H} $ and $\zeta \in (1/2, 1] $. Then we define
            \begin{eqnarray*}
                 \vec{F}_{U}(z,\zeta) := \left( p_U^{\vee}\odot V^\vee_U(z) \right)( G_U(z) \odot B_U(\zeta)).
            \end{eqnarray*} 
        to be the \emph{Hecke vector-form} of $ U_{\mathfrak{t}_\mu, w, r} $. 
    \end{defin}

    \begin{notn}\label{not:specialHeckeVectorForm}
        The special case of Definition \ref{def:HeckeVectorForm} that is of greatest interest is
            \begin{eqnarray*}
                \vec{F}_U(z,1) 
                &=& \begin{pmatrix}
                        f_{U,0}(z) & f_{U,1}(z) & \cdots & f_{U,r}(z) 
                    \end{pmatrix}^
                    \text{\emph{T}} \\
                &=& \left( p_U^{\vee}\odot V^\vee_U(z) \right) G_U(z).
            \end{eqnarray*}
        We might say this is the Hecke vector-form in normal position and write
            \begin{eqnarray*}
                \vec{F}_U(z,1) := \vec{F}_U(z)
            \end{eqnarray*}
    \end{notn}

The following result, first derived in \cite{grabner2020} for Hecke vector-forms on $ \mathfrak{H}(1) $, is the culmination of this chapter. 
    \begin{thm}[Fundamental Theorem of Hecke vector-forms]\label{fundamentalHeckeVector-form}
        Per usual, $ T_{\mathfrak{t}_\mu }z $ and $S_{\mathfrak{t}_\mu }z  $ are generators of $ \mathfrak{H}(\varpi_{\mu})$. To recall the matrix definition of $ \mathbf{a}_U $, see Notation \ref{notn:altExchange}. The relations 
            \begin{eqnarray*}
                \vec{F}_{{U}}\left(T_{\mathfrak{t}_\mu}z, \frac{1}{\varpi}\right) = 
                p_U^{\vee}\vec{F}_U(z)
            \end{eqnarray*}
        and
            \begin{eqnarray*}
                 \frac{\vec{F}_{{U}}(S_{\mathfrak{t}_\mu}z)}{z^{w-r}} =  \pm\mathbf{a}_U \vec{F}_{U}(z) 
            \end{eqnarray*}
        hold when we take $  -\mathbf{a}_U $ if $ r $ is odd and $ \mathbf{a}_U $ if $ r $ is even.
    \end{thm}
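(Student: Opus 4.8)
Both relations are identities between vectors of length $r+1$, and essentially all of the analytic content has already been isolated in Proposition \ref{prop:auxFUnderS} and Corollary \ref{cor:auxFUnderT}. I would therefore prove each relation componentwise, reducing it to matching the scalar transformation laws already established against the explicit matrix actions of $p_U^\vee$ and $\mathbf{a}_U$. First I would record the two bookkeeping facts I will use repeatedly. From the computation in Proposition \ref{genBinomialTheorem}, the matrix $p_r^\vee\odot V_r^\vee(z)$ has $(k,\ell)$-entry $\binom{k}{\ell}z^{k-\ell}$, so that $[\vec{F}_U(z,\zeta)]_k=\sum_{\ell=0}^k\binom{k}{\ell}z^{k-\ell}\zeta^\ell g_{U,\ell}(z)$ and in particular $[\vec{F}_U(z)]_k=f_{U,k}(z)$. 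Second, $\mathbf{a}_U=\mathcal{V}_r(-1,\dots,-1)\odot\iota_r$ is a signed exchange matrix: its only nonzero entry in row $k$ is $(-1)^{r-k}$, sitting in column $r-k$.

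\textbf{The $S$-transformation.} This half is bookkeeping-free once the entry of $\mathbf{a}_U$ is read off. The $k$-th entry of $\mathbf{a}_U\vec{F}_U(z)$ selects the single term $(-1)^{r-k}f_{U,r-k}(z)$. On the other hand, Proposition \ref{prop:auxFUnderS} gives $z^{-(w-r)}f_{U,k}(S_{\mathfrak{t}_\mu}z)=(-1)^k f_{U,r-k}(z)$, which is the $k$-th entry of $z^{-(w-r)}\vec{F}_U(S_{\mathfrak{t}_\mu}z)$. Equating the two forces the global scalar to satisfy $(-1)^k=\pm(-1)^{r-k}$, i.e. $\pm=(-1)^{2k-r}=(-1)^r$; this is exactly the stated rule, $-\mathbf{a}_U$ for odd $r$ and $+\mathbf{a}_U$ for even $r$.

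\textbf{The $T$-transformation.} Here I would start from the right-hand side. The $k$-th entry of $p_U^\vee\vec{F}_U(z)$ is $\sum_{j=0}^k\binom{k}{j}f_{U,j}(z)$; expanding each $f_{U,j}$ and swapping the order of summation via the Vandermonde absorption $\binom{k}{j}\binom{j}{m}=\binom{k}{m}\binom{k-m}{j-m}$ collapses this to $\sum_{m=0}^k\binom{k}{m}(1+z)^{k-m}g_{U,m}(z)$. I would then evaluate the left-hand side $[\vec{F}_U(T_{\mathfrak{t}_\mu}z,\varpi^{-1})]_k$ using Corollary \ref{cor:auxFUnderT} applied to the components rescaled by the second argument $\zeta=\varpi^{-1}$: the shift $T_{\mathfrak{t}_\mu}z=z+\varpi$ produces a factor $\varpi^{m}$ attached to each $g_{U,m}$, while $B_U(\varpi^{-1})$ contributes the compensating $\varpi^{-m}$, so the two cancel term by term and the identical double-sum $\sum_{m=0}^k\binom{k}{m}(1+z)^{k-m}g_{U,m}(z)$ emerges. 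Matching the two closes this half.

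\textbf{Expected obstacle.} The delicate point is entirely in the $\varpi$-bookkeeping of the $T$-relation. One must verify that the power $\varpi^m$ generated by the shift is indexed by the same $m$ as the scaling $\varpi^{-m}$ coming from $B_U(\varpi^{-1})$, since only then do they cancel; the reindexing that rewrites $f_{U,k}(T_{\mathfrak{t}_\mu}z)$ in a form indexed by $m$ rather than by the summation variable of Corollary \ref{cor:auxFUnderT} is exactly where the Vandermonde convolution and the symmetry $\binom{n}{m}=\binom{n}{n-m}$ must be applied, and in the right order. I would carry out the small cases $r=0,1$ first as a sanity check on the sign and on the cancellation before committing to the general manipulation, since this is the step most prone to an off-by-$\varpi$ slip.
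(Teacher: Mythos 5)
Your $S$-half is correct and is essentially the paper's own argument: the only nonzero entry in row $k$ of $\mathbf{a}_U=\mathcal{V}_r(-1,\dots,-1)\odot\iota_r$ is $(-1)^{r-k}$ in column $r-k$, so matching $[\mathbf{a}_U\vec{F}_U(z)]_k=(-1)^{r-k}f_{U,r-k}(z)$ against Proposition \ref{prop:auxFUnderS}, which gives $z^{-(w-r)}f_{U,k}(S_{\mathfrak{t}_\mu}z)=(-1)^kf_{U,r-k}(z)$, forces the global sign $(-1)^{2k-r}=(-1)^r$ exactly as you say. Nothing to add there.

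The $T$-half has a genuine gap, and it sits precisely where you flagged your ``expected obstacle.'' Your computation of the right-hand side is fine: $[p_U^{\vee}\vec{F}_U(z)]_k=\sum_{m=0}^{k}\binom{k}{m}(1+z)^{k-m}g_{U,m}(z)$. But the left-hand side, evaluated literally from Definition \ref{def:HeckeVectorForm} together with $G_U(T_{\mathfrak{t}_\mu}z)=G_U(z)$, is
\[
[\vec{F}_U(T_{\mathfrak{t}_\mu}z,\varpi^{-1})]_k=\sum_{\ell=0}^{k}\binom{k}{\ell}(z+\varpi)^{k-\ell}\varpi^{-\ell}g_{U,\ell}(z):
\]
the shift attaches $(z+\varpi)^{k-\ell}$ to $g_{U,\ell}$, not a bare factor $\varpi^{\ell}$, so there is nothing for $B_U(\varpi^{-1})$ to cancel against. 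Already at $k=1$ the two sides read $(z+\varpi)g_{U,0}+\varpi^{-1}g_{U,1}$ versus $(1+z)g_{U,0}+g_{U,1}$, which agree only when $\varpi=1$. The source of the trouble is Corollary \ref{cor:auxFUnderT} itself (equivalently Proposition \ref{prop:fWithWeights}), on which both you and the paper rely: in passing from $\sum_{\ell}\binom{k}{\ell}g_{U,k-\ell}(z)(z+a\varpi)^{\ell}$ to $\sum_{p}\binom{k}{p}\sum_{m}\binom{p}{m}(a\varpi)^{p-m}g_{U,p-m}(z)z^{m}$, the index of $g$ is reshuffled in a way that is legitimate only when $a\varpi=1$; a direct check at $k=1$ gives $f_{U,1}(T_{\mathfrak{t}_\mu}z)=(z+\varpi)g_{U,0}+g_{U,1}$ while the Corollary's right-hand side is $(1+z)g_{U,0}+\varpi g_{U,1}$. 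The transformation law one actually gets, by expanding $(z+\varpi)^{k-\ell}$ and using $\binom{k}{\ell}\binom{k-\ell}{j}=\binom{k}{j+\ell}\binom{j+\ell}{\ell}$, is
\[
f_{U,k}(T_{\mathfrak{t}_\mu}z)=\sum_{p=0}^{k}\binom{k}{p}\varpi^{k-p}f_{U,p}(z),
\qquad\text{i.e.}\qquad
\vec{F}_U(T_{\mathfrak{t}_\mu}z)=\left(p_U^{\vee}\odot V_U^{\vee}(\varpi)\right)\vec{F}_U(z),
\]
which collapses to the stated $p_U^{\vee}\vec{F}_U(z)$ only for the modular group $\varpi=1$. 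Your proposed $r=1$ sanity check would have caught this; as it stands, the first relation cannot be established along your route (nor along the paper's, which makes the same move) for $\varpi\neq 1$.
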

    \begin{proof}
        We prove the statements in order. Firstly, a short calculation from Defintion \ref{def:HeckeVectorForm} says
            \begin{eqnarray*}
                \vec{F}_U(z,1) = 
                    \begin{pmatrix}
                        f_{U,0}(z) & f_{U,1}(z) & \cdots & f_{U,r}(z) 
                    \end{pmatrix}^{\text{T}}
            \end{eqnarray*}
        is true. Alongside Corollary \ref{cor:auxFUnderT}, observe Proposition \ref{genBinomialTheorem}. Then changing what needs to be changed, we may write
            \begin{eqnarray*}
                &{}& 
                \begin{pmatrix}
                    f_{U,0}(T_{\mathfrak{t}_\mu}z) & f_{U,1}(T_{\mathfrak{t}_\mu}z) & \cdots & f_{U,r}(T_{\mathfrak{t}_\mu}z) 
                \end{pmatrix}^{\text{T}} \\
                &=& p^\vee_U \left\{ \left( p^\vee_U \odot  
                    V^\vee_U(z) \right) (G_U(z) \odot B_U(\varpi) )\right \}
            \end{eqnarray*}
        holds. Therefore,
            \begin{eqnarray*}
                \vec{F}_U(T_{\mathfrak{t}_\mu}z, 1/\varpi) &=& p^\vee _U \left\{ \left( p^\vee_U \odot V^\vee_U(z) \right)\left( ( G_U(z) \odot B_U(\varpi)) \odot  B_U(1/\varpi  )\right)\right \} 
                \\
                &=& p^\vee_U \left\{ \left( p^\vee_U \odot V^\vee_U(z) \right) G_U(z) \right\} \\
                &=& p_U^{\vee}\vec{F}_U(z,1) \\
                &=& p_U^{\vee}\vec{F}_U(z)
            \end{eqnarray*}
        is true, giving us what we wanted per Notation \ref{not:specialHeckeVectorForm}.
        
        The second relation is much simpler. Using Proposition \ref{prop:auxFUnderS} as intermediary reveals
            \begin{eqnarray*}
                \vec{F}_U(S_{\mathfrak{t}_\mu}z,1) &=& z^{w-r}
                    \begin{pmatrix}
                        (-1)^0 f_{U,r}(z) \\
                        (-1)^{1} f_{U,r-1}(z) \\
                        \vdots \\
                        (-1)^r f_{U,0}(z)
                    \end{pmatrix} \\
                    &=& \pm \mathbf{a}_U z^{w-r}\vec{F}_U(z,1)\\
                    &=& \pm \mathbf{a}_U z^{w-r}\vec{F}_U(z)
            \end{eqnarray*}
        holds. Taking $\pm \mathbf{a}_U$ whether $r$ is even or odd as in Example \ref{exa:altExchange}, then we are finished.
    \end{proof}

\section{Hecke automorphic linear differential equations}
The intuition behind automorphic linear differential equations is that we want to find differential equations such that whenever $ y $ is a solution, then $ y(T_{\mathfrak{t}_\mu}z) $ and $ y(S_{\mathfrak{t}_\mu}z) $ are also solutions, $ S_{\mathfrak{t}_\mu}z $ and $ T_{\mathfrak{t}_\mu}z $ being the generators of a Hecke triangle group. This clearly does not guarantee that solutions are automorphic forms -- in fact they often are not. Nevertheless, Hecke vector-forms allow us to give some uniformity to such solutions. 
    \begin{notn}
        Recall that $ \partial^k_w $ is the Ramanujan-Serre derivative. We denote a Hecke automorphic linear differential equation by
            \begin{eqnarray*}
                \mathsf{H}_{\mathbf{B}}[y] :\qquad \partial^{r+1}_{\mathfrak{t}_\mu , w-r}y + B_{\mathfrak{t}_\mu, 4 }\partial^{r-1}_{\mathfrak{t}_\mu , w-r}y + \cdots  + B_{\mathfrak{t}_\mu, 2r+2}y  = 0
            \end{eqnarray*}  
        where $  \mathbf{B} :=(1,0, B_{\mathfrak{t}_\mu, 4 },\cdots, B_{\mathfrak{t}_\mu, 2r+2 })$ and $ B_{\mathfrak{t}_\mu, w } \in \mathcal{M}_{w}(\mathfrak{H}(\varpi_\mu)) $.
    \end{notn}
To solve these equations we have recourse to a simple general method. This method is the so-called Frobenius method. We outline this procedure in the next section.

\subsection{The $q$-Frobenius method}
The Frobenius method (\cite{wangGuo1989}, pp. 61-63) is a classical tool. It is a manner of determining series solutions from $ n $-th order linear ordinary differential equations having regular singularities. This general version is of great significance in our study. For the proof of the Frobenius method for arbitrary $ n \in \mathbb{N} $ see \cite{henrici1977} or \cite{hille1976}. 

Here we explain the method for second-order equations: the crucial ideas are served most clearly in this simplest case. We also introduce some notation that will be used again. Some readers may be familiar with determining series solutions from regular linear differential equations; however, we provide greater detail because we must adapt the classical method to deal with Fourier series.

Let us begin with the simplest relevant ordinary differential equation. Suppose we are considering solutions $ w $ as Fourier series. We use the Boole differential operator $ \theta^1 = \theta := q d / dq $. We know that $ q := e^{2 \pi i z/\varpi} $, but we prefer to work simply with $q$ until it is absolutely necessary to change. Also recall that $\theta^n f = \theta(\theta^{n-1}f ) $. We consider solutions of 
    \begin{eqnarray}\label{classicalSecondOrder}
        \mathsf{L}[w] :\quad \theta^2 w + A_1\theta w + A_0 w = 0,
    \end{eqnarray}
where $ A_1(q) $ and $ A_0(q) $ are some given (coefficient) functions of a complex variable $ e^{2 \pi i z/
\varpi_\mu} = q \in \mathbb{C} $ i.e. they have Fourier expansions. By the analytic theory, we may write a solution $w_1 $ as
    \begin{eqnarray}\label{eqn:regularSolution}
        w_1(q) = q^{\lambda_{1}} \sum^{\infty}_{n=0} w_{1,n} q^n, \label{wSeries} 
    \end{eqnarray}
where the number $ \lambda_1 $ of \eqref{eqn:regularSolution} is the vanishing order of the series $ w_1 $. We will assume the relations
    \begin{eqnarray*}    
        A_1(q) = \sum_{n=0}^{\infty} A_{1,n} q^n , \label{a_1Series} \\
        A_{0}(q) = \sum_{n=0}^{\infty} A_{0,n} q^n \label{a_0Series}
    \end{eqnarray*}
hold. 

    \begin{notn}
        We shall denote the vanishing orders $ \lambda_1,...,\lambda_n $ of respective solutions $ w_1, \cdots, w_n $ of an $ n $-th order differential equation $ \mathsf{L}[w] $ by
            \begin{eqnarray}\label{eqn:vanishingLambda}
                \Lambda_{\mathsf{L}} := (\lambda_1, \cdots, \lambda_n).
            \end{eqnarray}
        As a convention we stipulate that $\lambda_i \geq \lambda_{i + k} $, etc.
    \end{notn}
This set of numbers $ \Lambda_\mathsf{L} $ is the linchpin of the Frobenius method. The reader should convince themselves that the vanishing orders $\Lambda_\mathsf{L} $ depend entirely on \eqref{classicalSecondOrder}. We remark that in our study, we are only interested in the case that the vanishing orders of $ \lambda_\mathsf{L} $ are all integral. 

Returning to the $ n = 2 $ case, our assumptions alongside basic calculations show that the relations
    \begin{eqnarray*}
         w_1 &=& \sum_{n=0}^{\infty} w_{1,n} q^{\lambda_1 + n},  \\
        \theta w_1 &=&  \sum_{n=0}^{\infty}w_{1,n}(\lambda_1 +n)q^{\lambda_1+n}, \\
        \theta^2 w_1 &=& \sum_{n=0}^{\infty}w_{1,n}(\lambda_1 +n)^2 q^{\lambda_1 + n}
    \end{eqnarray*}
hold. Combining in a sum reveals that
    \begin{eqnarray*}
        \theta^2 w_1 +  A_1 \theta w_1 + A_0 w_1   &=& \sum_{n=0}^{\infty}\left[(\lambda_1+n)^2 + (\lambda_1 +n)A_1 + A_0 \right]w_{1,n}q^{\lambda_1 + n}\\  &=& \sum_{n=0}^{\infty} w^*_{1,n}q^{\lambda_1 + n}  
    \end{eqnarray*}
is valid. With that in mind, a simple calculation with the Cauchy product of series gives us valid recursive relations
    \begin{eqnarray*}
        \begin{cases}
            w^*_{1,0} = \left[\lambda_1^2 + \lambda_1 A_{1,0} + A_{0,0}\right] w_{1,0} \\
            w^*_{1,1} = \left[(\lambda_1 +1)^2 + (\lambda_1 +1)A_{1,0}+ A_{0,0}\right] w_{1,1} + \left[(\lambda_1+1)A_{1,1} + A_{0,1}\right] w_{1,0}\\
            \qquad \vdots \\
            w^*_{1,n} = \left[(\lambda_1 +n)^2 + (\lambda_1+n)A_{1,0}+A_{0,0}\right] w_{1,n} \\
            \qquad  \qquad  \qquad \qquad \qquad \qquad \qquad \qquad + \sum_{k=1}^n\left[(\lambda_{1}+n-k) A_{1,k}+A_{0,k} \right]w_{1,n-k}.
        \end{cases}
    \end{eqnarray*}
Setting $ w^*_{1,n} = 0 $, such formulas allow us to determine $ w_{1,n} $ as a rational function in terms of $ \lambda_1 $, $ w_{1,0} $, $ A_{1,k}$, and $ A_{0,k}$, for $ 0 \leq k \leq n $. In other words, letting $ w^*_{1,n} = 0 $, we find
    \begin{eqnarray}\label{eqn:coefficientExpression}
        w_{1,n} = \frac{w_{1,0} \, p(\lambda_1)}{\prod_{k=1}^{n}(\lambda_1+k)^2 +  (\lambda_1+k) A_{1,0} + A_{0,0}} 
    \end{eqnarray}
holds for some polynomial $ p(\lambda_1) $. Notice in (\ref{eqn:coefficientExpression}) that $ \lambda_1 $ can sometimes cause vanishing in the denominator, a problem we must account for.

Isolating from the formula for $ w^*_{1,0} $ in variable $ \lambda $, the polynomial 
    \begin{eqnarray}\label{indicial}
        \text{ind}_{\mathsf{L}}(\lambda) := \lambda^2 +\lambda A_{1,0} + A_{0,0} 
    \end{eqnarray}
is the so-called \emph{indicial polynomial} of $ \mathsf{L}[w] $. In general, one can see that to determine the entire set of fundamental solutions of $ \mathsf{L}[w]$, we must have on hand all solutions of $ \text{ind}_{\mathsf{L}}(\lambda) = 0 $. In other words, the set $\Lambda_{\mathsf{L}} $ of vanishing orders coincides with the roots of the indicial polynomial. It is evident that the order $ n $ of the indicial polynomial is identical to the order of the differential equation $ \mathsf{L}[w] $.

Summarily, that we have shown
    \begin{eqnarray}\label{eqn:DiffEqSeriesForm}
        \mathsf{L}[w_1 ] : \qquad  w_{1,0}\, \text{ind}_{\mathsf{L}}(\lambda_1) q^{\lambda_1}  + q^{\lambda_1} \sum_{n=1}^{\infty} w^*_{1,n}q^{n} = 0
    \end{eqnarray}
holds. Where $ \lambda_1 $ is a root of the indicial polynomial $ \text{ind}_{\mathsf{L}}(\lambda) $, then evidently if $ w^*_{1,n} = 0 $ for $ n = 1, 2, \cdots $ and the series of $ w_1 $ converges, then we will have succeeded in constructing a solution of $\mathsf{L}[w] $.

To give the solutions, let $ \lambda_1 $ and $\lambda_2 $ be the two roots of the equation
    \begin{eqnarray*}
        \text{ind}_{\mathsf{L}}(\lambda) = 0.
    \end{eqnarray*}    
If the difference of $\lambda_1 $ and $ \lambda_2 $ is not an integer, then there is no undesired vanishing in the denominator of (\ref{eqn:coefficientExpression}). The two solutions of equation (\ref{eqn:DiffEqSeriesForm}) are seen from the proceeding to be given by
    \begin{eqnarray*}
        w_{1} = q^{\lambda_1}\sum_{n=0}^{\infty} w_{1,n}q^n , \\
        w_2 = q^{\lambda_2}\sum_{n=0}^{\infty} w_{2,n}q^n.
    \end{eqnarray*}

Next, suppose that $ \text{ind}_{\mathsf{L}}(\lambda) = 0  $ has two solutions $ \lambda_1 = \lambda_2 $. The problem with an integral difference is that (\ref{eqn:coefficientExpression}) becomes undefined as there is vanishing in the denominator. The first solution is as before or 
    \begin{eqnarray*}
        w_1 = q^{\lambda_1}\sum_{n=0}^{\infty}w_{1,n} q^{n}.
    \end{eqnarray*}
The term $ w_{1,0} \text{ind}_{\mathsf{L}}(\lambda_1)z^{\lambda_1} $ in (\ref{eqn:DiffEqSeriesForm}) is where we have a degree of freedom when we construct solutions in the problematic instances. To find the other solution, observe that if $ w^*_{1,n} = 0 $ for $ 1 \leq n $, then it must be the case that in (\ref{eqn:DiffEqSeriesForm}), relation
    \begin{eqnarray*}
        w_{1,0}\, \text{ind}_{\mathsf{L}}(\lambda)q^{\lambda}
        =  w_{1,0}(\lambda -\lambda_1)^2 q^{\lambda}
    \end{eqnarray*}
holds. Next write $ w ' := {d w}/{d\lambda } $. This leaves 
    \begin{eqnarray*}
        \theta^2w'  + a_1 \theta w' + a_0 w'   
        = q^{\lambda}w_{1,0}(\lambda-\lambda_1) \left(2 +(\lambda-\lambda_1)\log q \right) 
    \end{eqnarray*}
holds, an expression that still vanishes at $ \lambda_1 $. Therefore, 
    \begin{eqnarray*}
        w_2 = \left( w'\right)_{\lambda = \lambda_1 }
    \end{eqnarray*}
is the second solution. 

Finally, we assume that $ \lambda_1 - \lambda_2 \in \mathbb{N} $. The first solution is as before or 
    \begin{eqnarray*}
        w_1 = q^{\lambda_1}\sum_{n=0}^{\infty}w_{1,n} q^{n}.
    \end{eqnarray*} 
In the indicial equation (\ref{indicial}) we replace $ w_{1,0} $ with $ \omega (\lambda-\lambda_1 ) $. So that in (\ref{eqn:DiffEqSeriesForm}) we have
    \begin{eqnarray*}
         w_{1,0} \,\text{ind}_{\mathsf{L}}(\lambda)q^{\lambda} = C(\lambda - \lambda_1)^2(\lambda - \lambda_2)q^\lambda
    \end{eqnarray*}
holds where we are free to choose some $ C \neq 0 $. Reasoning as in the previous case, 
    \begin{eqnarray}\label{logAppears}
        w_2 =  w_{1} \log q + \left(  w'\right)_{\lambda = \lambda_1 }
    \end{eqnarray}
is a solution.

We omit the proof that the series $ u_1 $ and $ u_2 $ converge as it is straight-forward (or see \cite{wangGuo1989}).

Continuing our explication, we consider Frobenius applied to the first non-trivial case of a Hecke automorphic linear differential equation, namely where $ r = 1 $ in
    \begin{eqnarray}
            \mathsf{H}[y]: \qquad \partial_{w-1}^{2} y + C E_{\mathfrak{t}_\mu, 4}  y = 0.
    \end{eqnarray}
We make some preparatory calculations. We see that (omitting $C$ for simplicity)
    \begin{eqnarray*}
        \partial_{w-1}^2 y = \theta^2 y - (w-1)\frac{\mu-2}{4\mu}\theta(E_{\mathfrak{t}_\mu, 2}y)   - (w+1)\frac{\mu-2}{4\mu}E_{\mathfrak{t}_\mu, 2}\theta y \qquad \qquad \qquad  \\  + (w+1)(w-r)\left(\frac{\mu-2}{4\mu}\right)^2 E^2_{\mathfrak{t}_\mu, 2}y  \\
        = \theta^2 y  - 2(w+1)\frac{\mu-2}{4\mu}E_{\mathfrak{t}_\mu, 2}\theta y - 
        (w-1)\left( \theta E_{\mathfrak{t}_\mu, 2}  + w\frac{\mu-2}{4\mu}E^2_{\mathfrak{t}_\mu, 2}  \right)\frac{\mu-2}{4\mu} y
    \end{eqnarray*}
holds. We can write this more simply as
    \begin{eqnarray}\label{secondOrderTheta}
        \mathsf{H}_{}[y]:  \qquad \theta^2 y  - 2(w+1)\frac{\mu-2}{4\mu}E_{\mathfrak{t}_\mu, 2}\theta y - 
        (w-1)B_{\mathfrak{t}_\mu, 4}\frac{\mu-2}{4\mu} y = 0,
    \end{eqnarray}
where (here only) we have defined
    \begin{eqnarray*}
         B_{\mathfrak{t}_\mu, 4} &:=& \partial_{w-1}E_{\mathfrak{t}_\mu, 2} + \frac{4 \mu }{(\mu-2)(w-1)}E_{\mathfrak{t}_\mu,4 } \qquad \qquad \qquad \\
         &=& \theta E_{\mathfrak{t}_\mu, 2}  + w\frac{\mu-2}{4\mu}E^2_{\mathfrak{t}_\mu, 2} + \frac{4 \mu }{(\mu-2)(w-1)}E_{\mathfrak{t}_\mu,4 } .
    \end{eqnarray*}
We find that \eqref{secondOrderTheta} is now a familiar second-order equation. 

Just as in the special case found in \cite{grabner2020}, we see the solutions obey
    \begin{eqnarray*}
        y_0 = q^{\lambda_{0}} \sum_{n=0}^{\infty} s_{0,n}q^n
    \end{eqnarray*}
and
    \begin{eqnarray*}
        y_{1} = c_1(1) z y_0 + q^{\lambda_1}\sum_{n=0}^{\infty}s_{1,n}q^n.
    \end{eqnarray*}
For a general equation of degree $ n $, we have a mixed-product identity that we derive now. Notice that 
    \begin{eqnarray*}
        y_\ell(z) = c_\ell(\ell)z^\ell y_0 + c_{\ell -1}(\ell)z^{\ell-1}q^{\lambda_{1}} \sum_{n=0}^{\infty} s_{1,n}q^n + \cdots + q^{\lambda_{r}} \sum_{n=0}^{\infty} s_{r,n}q^n 
    \end{eqnarray*}
holds. Observe that $ 1 = c_0(0) = c_0(1) =  \cdots = c_{0}(r) $
is satisfied, where otherwise the constants $ c_k(\ell) $ are chosen to give the desired recursion. From this it follows 
    \begin{eqnarray*}
        &{}&
        \begin{pmatrix}
            y_0(z) &
            y_1(z) &
            \cdots &
            y_{r}(z)
        \end{pmatrix}^{\text{{T}}}\\
        &=& \left\{
        \begin{pmatrix}
            c_0(0) & &  \\
            c_1(1) & c_0(1) & & \\
            \vdots & \vdots & \ddots & \\
            c_r(r) & c_{r-1}(r) & \cdots & c_0(r)
        \end{pmatrix} \odot
        \begin{pmatrix}
            1 & &  \\
            z & 1 & & \\
            \vdots & \vdots & \ddots & \\
            z^r & z^{r-1} & \cdots & 1
        \end{pmatrix} \right\}\\
        &\times&
        \begin{pmatrix}
            q^{\lambda_0}\sum_{n=0}^\infty s_{0,n}q^n \\
            q^{\lambda_1}\sum_{n=0}^\infty s_{1,n}q^n \\
            \\
            q^{\lambda_r}\sum_{n=0}^\infty s_{r,n}q^n
        \end{pmatrix}\\
        &=& (C^{\vee}_{\mathsf{H}} \odot V^{\vee}_{\mathsf{H}}(z)) S_{\mathsf{H}}(z)
    \end{eqnarray*}
holds (notation being obvious).  Abbreviating our ansatz then, we have that the mixed-product identity
    \begin{eqnarray}\label{ansatzVector}
        \vec{Y}_{\mathsf{H}}(z) = (C^{\vee}_{\mathsf{H}} \odot V^{\vee}_{\mathsf{H}}(z)) S_{\mathsf{H}}(z)
    \end{eqnarray}
holds for $ \vec{Y}_{\mathsf{H}} $ a fundamental solution vector of $\mathsf{H}_\mathbf{B}$. 

    \begin{rmk}
        We remark that the matrix $ V_U^\vee(z) $ appears in \eqref{ansatzVector} as a result of $ \log q $ where $ {q := e^{2 \pi i z/\varpi_\mu}} $, as can be seen e.g. at equation \eqref{logAppears}. Furthermore, it is worthwhile to compare equation \eqref{ansatzVector} with the form of the mixed-product binomial theorem of Proposition \ref{genBinomialTheorem}.
    \end{rmk}

This completes our illustration of the Frobenius method.

\subsection{The adapted Wronskian}
The Wronskian is another classical notion, but we want to consider a Wronskian with a non-standard differential operator. With respect to quasiautomorphic forms, the most natural differential operator is the Ramanujan-Serre operator $ \partial_{\mathfrak{t}_\mu, w} $ (see Definition \ref{Ramanujan-Serre}) as it exhibits closure with respect to quasiautomorphic forms. Due to this alteration, we will adapt the classical Wronskian in a
    \begin{defin}
        Let $ \vec{F}_{U}(z,1) = \vec{F}_{U}(z)  $ be the Hecke vector-form of $ U_{\mathfrak{t}_\mu, w,r} $ and $ \partial_{w-r}^k $ be the Ramanujan-Serre differential operator. Then we define the \emph{Wronskian} by
            \begin{eqnarray*}
                \mathcal{W}_{\vec{F}}(z) &:=& \det
                    \begin{pmatrix}
                        f_{U, 0}(z) & 
                        \partial_{w-r}f_{U, 0}(z) &
                        \cdots &
                        \partial_{w-r}^r f_{U, 0}(z) \\
                        f_{U, 1}(z) & 
                        \partial_{w-r}f_{U, 1}(z) &
                        \cdots &
                        \partial_{w-r}^r f_{U, 1}(z) \\
                        \vdots & \vdots & \ddots & \vdots \\
                        f_{U, r}(z) & 
                        \partial_{w-r}f_{U, r}(z) &
                        \cdots & 
                        \partial_{w-r}^r f_{U, r}(z) 
                    \end{pmatrix}
                    \\ &=&  \det
                    \begin{pmatrix}
                        \vec{F}_U(z) & 
                        \partial_{w-r}\vec{F}_U(z) &
                        \cdots &
                        \partial_{w-r}^r \vec{F}_U(z)
                    \end{pmatrix}.
            \end{eqnarray*}
    \end{defin}

    \begin{rmk}
        With respect to our choice of using the Ramanujan-Serre derivative, we find that the closure relation
            \begin{eqnarray*}
                \partial_{w-r} U_{\mathfrak{t}_\mu, w, r} : \mathcal{QM}^r_w(\mathfrak{H}(\varpi_\mu))\rightarrow \mathcal{QM}^r_{w+2}(\mathfrak{H}(\varpi_\mu))  
            \end{eqnarray*}
        holds. This is the reason for indexing the Ramanujan-Serre derivative with $ w-r $.      
    \end{rmk}

It turns out that our adaptation of the Wronskian does not change things very dramatically. The following proposition clarifies this assertion. 
    \begin{prop}\label{WronskiSimilarityInvariant}
        Let $ \vec{F}_{U}(z,1) = \vec{F}_{U}(z) $ be the Hecke vector-form of some $ U_{\mathfrak{t}_\mu, w,r} $. As before $ {\theta  : = q d/dq} $ is the Boole differential operator. The relation
            \begin{eqnarray*}
                \mathcal{W}_{\vec{F}} = \det
                    \begin{pmatrix}
                        \vec{F}_U & 
                        \theta \vec{F}_U &
                        \cdots &
                        \theta ^r \vec{F}_U
                    \end{pmatrix}.
            \end{eqnarray*}
        holds.
    \end{prop}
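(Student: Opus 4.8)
The plan is to show that each Ramanujan-Serre column $\partial^k_{w-r}\vec F_U$ equals the Boole column $\theta^k\vec F_U$ plus a fixed-function combination of the earlier Boole columns, so that the two Wronskian matrices differ only by right-multiplication by a unitriangular matrix, which preserves the determinant.

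First I would recall from Definition \ref{Ramanujan-Serre} that
\[
    \partial_{\mathfrak t_\mu, w} = \theta - \frac{w(\mu-2)}{4\mu}\,E_{\mathfrak t_\mu, 2}\,\cdot\,,
\]
so that, writing $c_w := \tfrac{w(\mu-2)}{4\mu}$, a single application differs from $\theta$ only by multiplication by the fixed function $c_w E_{\mathfrak t_\mu,2}$. Using the iteration rule $\partial^{k+1}_{\mathfrak t_\mu, w-r} = \partial_{\mathfrak t_\mu, (w-r)+2k}\circ\partial^k_{\mathfrak t_\mu, w-r}$ together with the product rule for $\theta$, I would prove by induction on $k$ that for any function $g$,
\[
    \partial^k_{w-r}\, g = \theta^k g + \sum_{j=0}^{k-1}\alpha^{(k)}_j\,\theta^j g,
\]
where each coefficient $\alpha^{(k)}_j$ is a function of $z$ assembled from $E_{\mathfrak t_\mu,2}$, its $\theta$-derivatives, and the constants $c_{w-r}, c_{w-r+2},\dots$. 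The essential point is that the inductive step only ever multiplies by the fixed function $c_{(w-r)+2k}E_{\mathfrak t_\mu,2}$ or differentiates already-accumulated function coefficients, so the $\alpha^{(k)}_j$ do not depend on $g$.

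Because $\partial_{w-r}$ acts on $\vec F_U$ componentwise, the displayed identity holds verbatim with $g$ replaced by the vector $\vec F_U$. Reading these as relations among the columns of the Wronskian matrix, I obtain the factorization
\[
    \begin{pmatrix}\vec F_U & \partial_{w-r}\vec F_U & \cdots & \partial^r_{w-r}\vec F_U\end{pmatrix}
    =
    \begin{pmatrix}\vec F_U & \theta\vec F_U & \cdots & \theta^r\vec F_U\end{pmatrix}\,\mathcal N,
\]
where $\mathcal N = (\mathcal N_{j,k})$ with $\mathcal N_{j,k} := \alpha^{(k)}_j$ for $j<k$, $\mathcal N_{k,k} := 1$, and $\mathcal N_{j,k} := 0$ for $j>k$; thus $\mathcal N$ is upper triangular with $1$'s on the diagonal. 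Taking determinants and using $\det\mathcal N = 1$ together with the definition of $\mathcal W_{\vec F}$ yields the claim.

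The step deserving the most care is the uniformity of the coefficients $\alpha^{(k)}_j$ across the components of $\vec F_U$: a reader might expect each component $f_{U,k}$ to be differentiated with a weight index matching its own behavior. But $\mathcal W_{\vec F}$ is defined with the single operator $\partial_{w-r}$ applied to the whole vector, so the very same operator sequence $\partial_{w-r},\partial_{w-r+2},\dots$ acts on every component. This is exactly what makes the correction a single right factor $\mathcal N$, rather than a row-dependent operation, and it is what secures the determinant identity.
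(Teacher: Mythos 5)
Your proof is correct and follows essentially the same route as the paper's: both argue by induction that each column $\partial^k_{w-r}\vec F_U$ equals $\theta^k\vec F_U$ plus function-coefficient combinations of the lower $\theta^j\vec F_U$, and then invoke invariance of the determinant under such column operations. Your version is in fact a bit more careful than the paper's, since you make the unitriangular right factor $\mathcal N$ and the uniformity of its entries across components explicit, whereas the paper phrases the same induction loosely in terms of ``scalar multiples.''
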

    \begin{proof}
         We use the following property of determinants: adding a scalar multiple of one column of a matrix to another column leaves the determinant invariant. Thus, we need to show that elements of the square matrix
            \begin{eqnarray*}
                \begin{pmatrix}
                    \vec{F}_U & 
                    \partial_{w-r} \vec{F}_U &
                    \cdots &
                    \partial_{w-r}^r \vec{F}_U
                \end{pmatrix}
            \end{eqnarray*}
        are none other than scalar multiples of terms $ \vec{F}_U $, $ \theta\vec{F}_U $, $\cdots $, $ \theta^r \vec{F}_U $. For the base case, observe that
            \begin{eqnarray*}
                \partial \vec{F}_U =  \theta \vec{F}_U - aE_2 \vec{F}_U 
            \end{eqnarray*}
        holds. It is clear that $\theta \vec{F}_U $ and $ - aE_2 \vec{F}_U $ are scalar multiples of $ \theta \vec{F}_U $ and $ \vec{F}_U $, respectively. 

        Next, suppose that 
            \begin{eqnarray*}
                \mathcal{W}_{\vec{F}} =
                    \begin{pmatrix}
                        \vec{F}_U & 
                        \partial_{w-r} \vec{F}_U &
                        \cdots &
                        \partial_{w-r}^r \vec{F}_U
                    \end{pmatrix}
            \end{eqnarray*}
        satisfies the hypothesis. Then we find
            \begin{eqnarray*}
                    \begin{pmatrix}
                        \vec{F}_U & 
                        \partial_{w-r} \vec{F}_U &
                        \cdots &
                        \partial_{w-r}^{r+1} \vec{F}_U
                    \end{pmatrix} \qquad \qquad \qquad \qquad \quad\\
                =
                    \begin{pmatrix}
                        \vec{F}_U & 
                        \partial_{w-r} \vec{F}_U &
                        \cdots &
                        \partial_{w-r}^r \vec{F}_U &
                        \theta (\partial_{w-r}^{r} \vec{F}_U) - bE_{\mathfrak{t}_\mu, 2} (\partial_{w-r}^{r} \vec{F}_U)
                    \end{pmatrix}
            \end{eqnarray*}
        holds. By inspection we only need to show $ \theta (\partial_{w-r}^{r} \vec{F}_U) $ is a linear combination of the elements $ \vec{F}_U $, $ \theta \vec{F}_U $, $\cdots$ $ \theta^{r+1} \vec{F}_U $; however, this holds by linearity of the differential operator $ \theta $ and the induction hypothesis. The proof is complete.
    \end{proof}

We shall use the Wronskian again later, but first we demonstrate its use in an instructive, simple result. 
    \begin{prop}\label{prop:WronskiIsModular}
        Let $ U_{\mathfrak{t}_\mu, w, r} $ be arbitrary. If $\vec{F}_U $ is the Hecke vector-form of $ U_{\mathfrak{t}_\mu, w, r}  $, then the relation 
            \begin{eqnarray*}
                \mathcal{W}_{\vec{F}} \in \mathcal{M}_{w(r+1)}(\mathfrak{H}(\varpi_\mu))
            \end{eqnarray*}
        holds. 
    \end{prop}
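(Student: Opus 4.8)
The plan is to verify that $\mathcal{W}_{\vec F}$ obeys the weight-$w(r+1)$ automorphy law on the two generators $S_{\mathfrak{t}_\mu}z$ and $T_{\mathfrak{t}_\mu}z$ and is holomorphic (including at the cusp); since $w(r+1)$ is even, the factor of automorphy $(cz+d)^{w(r+1)}$ is a genuine $1$-cocycle on $\mathfrak{H}(\varpi_\mu)$, so checking the generators propagates to the whole group. Throughout I would work with the $\theta$-version of the Wronskian furnished by Proposition \ref{WronskiSimilarityInvariant}, writing $\mathcal{W}_{\vec F}(z) = \det\mathcal{F}(z)$ with $\mathcal{F}(z) = (\vec F_U(z), \theta\vec F_U(z), \ldots, \theta^r\vec F_U(z))$, because the Boole operator $\theta$ interacts predictably with both generators. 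The engine in both cases is to send $\mathcal{F}$ to $(\text{constant or anti-diagonal matrix})\cdot\mathcal{F}\cdot(\text{triangular matrix})$, so that the determinant only picks up the product of the two outer determinants.

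For $T_{\mathfrak{t}_\mu}z = z+\varpi$: the hauptbuch $G_U$ is a vector of functions of $q$, hence $T$-invariant, while the Pascal matrix function $P(z) := p_U^\vee\odot V_U^\vee(z)$ (entries $\binom{i}{j}z^{i-j}$) satisfies the one-parameter group law $P(x+y) = P(x)P(y)$, which is the mixed-product computation behind Proposition \ref{genBinomialTheorem}. Since $\vec F_U(z) = P(z)G_U(z)$, I get $\vec F_U(T_{\mathfrak{t}_\mu}z) = P(z+\varpi)G_U(z) = P(\varpi)\vec F_U(z)$, with $P(\varpi)$ a constant, lower-triangular, unipotent matrix. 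As $\theta$ commutes with translation by $\varpi$ and $P(\varpi)$ is constant, every column transforms alike, giving $\mathcal{F}(T_{\mathfrak{t}_\mu}z) = P(\varpi)\mathcal{F}(z)$; taking determinants and using $\det P(\varpi)=1$ yields the $T$-invariance $\mathcal{W}_{\vec F}(T_{\mathfrak{t}_\mu}z) = \mathcal{W}_{\vec F}(z)$.

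The $S$-transformation is the main obstacle. From Theorem \ref{fundamentalHeckeVector-form} I have $\vec F_U(S_{\mathfrak{t}_\mu}z) = z^{w-r}M\vec F_U(z)$ with the constant matrix $M = \pm\mathbf{a}_U$. Using the chain-rule identity $(\theta g)(S_{\mathfrak{t}_\mu}z) = z^2\,\theta[g(S_{\mathfrak{t}_\mu}\cdot)](z)$, I would prove by induction on $k$ that
\[
(\theta^k\vec F_U)(S_{\mathfrak{t}_\mu}z) = z^{w-r+2k}M\,\theta^k\vec F_U(z) + \sum_{j<k}c_{j,k}(z)\,M\,\theta^j\vec F_U(z),
\]
the leading coefficient being exactly $z^{w-r+2k}$ and every correction term lying in an earlier column. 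This says $\mathcal{F}(S_{\mathfrak{t}_\mu}z) = M\,\mathcal{F}(z)\,B(z)$ with $B(z)$ upper-triangular and diagonal entries $z^{w-r+2k}$, whence $\det B(z) = z^{\sum_{k=0}^r(w-r+2k)} = z^{w(r+1)}$. It then remains to check $\det M = 1$: a direct computation gives $\det\mathbf{a}_U = 1$ (it is anti-diagonal with entries $(-1)^{r-i}$, and the reversal sign $(-1)^{r(r+1)/2}$ times $(-1)^{r(r+1)/2}$ is $1$), and the prescribed sign makes $\det(\pm\mathbf{a}_U) = (\pm1)^{r+1} = 1$ in both parities. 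Therefore $\mathcal{W}_{\vec F}(S_{\mathfrak{t}_\mu}z) = z^{w(r+1)}\mathcal{W}_{\vec F}(z)$. The delicate point, where I expect to spend the most effort, is keeping the correction terms strictly triangular through the induction.

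Finally, for holomorphy: each entry $\theta^j f_{U,k}(z)$ is a polynomial in $z$ whose coefficients are $\theta$-derivatives of the $g_{U,\ell}$, hence functions of $q$ holomorphic at $q=0$ (they are built from automorphic forms and $E_{\mathfrak{t}_\mu,2}$). The determinant is thus a polynomial in $z$ with coefficients holomorphic at the cusp; but the $T$-invariance just established forces a periodic polynomial in $z$ to be constant in $z$, so $\mathcal{W}_{\vec F}$ is genuinely a holomorphic function of $q$, holomorphic at $i\infty$. Combining the two generator laws with this holomorphy and the evenness of $w(r+1)$ gives $\mathcal{W}_{\vec F}\in\mathcal{M}_{w(r+1)}(\mathfrak{H}(\varpi_\mu))$.
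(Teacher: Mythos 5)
Your proof is correct and follows the same overall strategy as the paper's: check the two generators using Theorem \ref{fundamentalHeckeVector-form} and observe that the multiplier matrices $p_U^{\vee}$ (unipotent triangular) and $\pm\mathbf{a}_U$ (signed antidiagonal) both have determinant $1$. You do supply two steps the paper elides — the induction producing the upper-triangular matrix $B(z)$ with diagonal $z^{w-r+2k}$, which is where the factor $z^{w(r+1)}$ actually comes from under $S_{\mathfrak{t}_\mu}z$ (the paper dismisses this with ``by the previous case''), and the holomorphy check at the cusp — and both of these additions are correct and strengthen the argument.
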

    \begin{proof}
        We show the effect of $T_{\mathfrak{t}_\mu z}$ and $ S_{\mathfrak{t}_\mu z} $ on $ \mathcal{W}_{\vec{F}} $. Recall that linear alterations to columns of a matrix do not change the determinant. 

        By the first case of the Fundamental Theorem 
            \begin{eqnarray*}
                \vec{F}_{{U}}\left(T_{\mathfrak{t}_\mu}z, \frac{1}{\varpi}\right) = 
                p_U^{\vee}\vec{F}_U(z,1) = p_U^{\vee}\vec{F}_{U}(z)
            \end{eqnarray*}
        holds. Substitution reveals
            \begin{eqnarray*}
                \mathcal{W}_{\vec{F}}(T_{\mathfrak{t}_\mu z}) =
                    \begin{pmatrix}
                        p_U^{\vee}\vec{F}_U(z) & 
                        \partial_{w-r} p_U^{\vee}\vec{F}_U(z) &
                        \cdots &
                        \partial_{w-r}^r p_U^{\vee}\vec{F}_U(z)
                    \end{pmatrix}
            \end{eqnarray*}
        holds. Though we may use linearity of differential operator to simplify further i.e. 
            \begin{eqnarray*}
                \partial_{w-r}^k p_U^{\vee}\vec{F}_U(z) = p_U^{\vee} \partial_{w-r}^k \vec{F}_U(z) 
            \end{eqnarray*}
        holds. We thus see that since the determinant of a triangular matrix is the product of the diagonal elements, here all being unity, it follows that $\det p_U^\vee = 1$. We are then permitted to write $ \mathcal{W}_{\vec{F}}(T_{\mathfrak{t}_\mu}z) = \mathcal{W}_{\vec{F}}(z) $ holds, just as we wish.

        By the second case of the Fundamental theorem 
            \begin{eqnarray*}
                 {\vec{F}_{{U}}(S_{\mathfrak{t}_\mu}z,1)} =  \pm  {z^{w-r}} \mathbf{a}_U \vec{F}_{U}(z,1) = \pm  {z^{w-r}} \mathbf{a}_U \vec{F}_{U}(z) 
            \end{eqnarray*}
            holds when we take $  1 $ if $ r $ is even and $ - 1 $ if $ r $ is odd. By the previous case, it is enough to observe $ \pm \det (\mathbf{a}) = 1 $. In other words, 
                \begin{eqnarray*}
                    \frac{\mathcal{W}_{\vec{F}}(S_{\mathfrak{t}_\mu}z)}{z^{w(r+1)}} = \mathcal{W}_{\vec{F}}(z)
                \end{eqnarray*}
            holds i.e. the Wronskian here is an automorphic form of weight $ w(r+1) $. This completes the proof.
    \end{proof}

\subsection{Fundamental solutions of Hecke automorphic linear ODEs}
Again, we denote a Hecke automorphic linear differential equation by
            \begin{eqnarray}\label{HeckeAutomorphicDiffEq}
                \mathsf{H}_{\mathbf{B}}[y] :\qquad \partial^{r+1}_{\mathfrak{t}_\mu , w-r}y + B_{\mathfrak{t}_\mu, 4 }\partial^{r-1}_{\mathfrak{t}_\mu , w-r}y + \cdots  + B_{\mathfrak{t}_\mu, 2r+2}y  = 0
            \end{eqnarray}  
        where $  \mathbf{B} :=(1,0, B_{\mathfrak{t}_\mu, 4 },\cdots, B_{\mathfrak{t}_\mu, 2r+2 })$ and $ B_{\mathfrak{t}_\mu, w } \in \mathcal{M}_{w}(\mathfrak{H}(\varpi_\mu)) $. Suppose that 
    \begin{eqnarray*}
        \vec{Y}_{\mathsf{H}}(z) := 
            \begin{pmatrix}
                y_{0}(z) &
                y_{1}(z) &
                \cdots &
                y_{r}(z)
            \end{pmatrix}^{\text{T}}
    \end{eqnarray*}
is the fundamental solution vector of equation $ \mathsf{H}_{\mathbf{B}}[y] $.

    \begin{thm}\label{thm:fundSolutions}
        With respect to $ \mathsf{H}_{\mathbf{B}}[y] $ suppose that $ \Lambda_\mathsf{H} = (\lambda_0, \cdots, \lambda_r )$ is a set of integers satisfying $ \lambda_0 \geq \lambda_1 \geq \cdots \geq \lambda_r \geq 0 $. Then the relation 
            \begin{eqnarray*}
                \mathcal{W}_{\vec{Y}} = C \Delta_{\mathfrak{t}_\mu}^{\frac{w(r+1)}{\delta_{\mathfrak{t}_\mu}}}
            \end{eqnarray*}
        holds for some complex $ C \neq 0 $. 
    \end{thm}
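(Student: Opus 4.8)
The plan is to reduce the identity to the observation that both $\mathcal{W}_{\vec{Y}}$ and $\Delta_{\mathfrak{t}_\mu}^{w(r+1)/\delta_{\mathfrak{t}_\mu}}$ are solutions of one and the same first-order linear $\theta$-equation, so that they agree up to a multiplicative constant. Two ingredients make this work: first, Proposition \ref{WronskiSimilarityInvariant} lets me replace the Ramanujan--Serre Wronskian by the ordinary $\theta$-Wronskian, for which the classical Abel--Liouville formula is available; second, the normalization $\mathbf{B} = (1,0,B_{\mathfrak{t}_\mu,4},\dots)$ forces the coefficient of $\theta^r$ in the $\theta$-form of $\mathsf{H}_{\mathbf{B}}[y]$ to come entirely from the top operator $\partial^{r+1}_{\mathfrak{t}_\mu,w-r}$. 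That $\mathcal{W}_{\vec{Y}}$ is a holomorphic automorphic form of weight $w(r+1)$, so that the comparison takes place inside $\mathcal{M}_{w(r+1)}(\mathfrak{H}(\varpi_\mu))$, follows from the vector-form structure of $\vec{Y}_{\mathsf{H}}$ displayed in \eqref{ansatzVector} together with Proposition \ref{prop:WronskiIsModular}; the hypothesis $\lambda_0 \geq \cdots \geq \lambda_r \geq 0$ with $\lambda_i \in \mathbb{Z}$ guarantees holomorphy at the cusp.

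First I would convert the equation to $\theta$-form. Writing $\partial_{\mathfrak{t}_\mu,v} = \theta - \tfrac{v(\mu-2)}{4\mu}E_{\mathfrak{t}_\mu,2}$ and expanding the composition $\partial^{r+1}_{\mathfrak{t}_\mu,w-r} = \prod_{k=0}^{r}(\theta - \tfrac{(w-r+2k)(\mu-2)}{4\mu}E_{\mathfrak{t}_\mu,2})$, the leading term is $\theta^{r+1}$ and the coefficient of $\theta^{r}$ is $-(\sum_{k=0}^{r}\tfrac{(w-r+2k)(\mu-2)}{4\mu})E_{\mathfrak{t}_\mu,2} = -\tfrac{(\mu-2)(r+1)w}{4\mu}E_{\mathfrak{t}_\mu,2}$; the cross-terms in which a $\theta$ differentiates an $E_{\mathfrak{t}_\mu,2}$ drop the $\theta$-order by one and hence do not contribute at order $\theta^{r}$. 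Since every remaining summand of $\mathsf{H}_{\mathbf{B}}[y]$ begins at $\partial^{r-1}_{\mathfrak{t}_\mu,w-r}$ or lower, it affects only $\theta^{r-1}$ and below, so the $\theta^{r}$-coefficient of the whole equation is exactly $-\tfrac{(\mu-2)(r+1)w}{4\mu}E_{\mathfrak{t}_\mu,2}$.

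Next, applying Abel--Liouville to the $\theta$-Wronskian (legitimate because $\theta$ is a genuine derivation) gives
\[
\theta\,\mathcal{W}_{\vec{Y}} = \frac{(\mu-2)(r+1)w}{4\mu}\,E_{\mathfrak{t}_\mu,2}\,\mathcal{W}_{\vec{Y}}.
\]
On the other side I use the Hecke analogue of Ramanujan's identity $\theta\Delta = E_2\Delta$, namely $\partial_{\mathfrak{t}_\mu,\delta_{\mathfrak{t}_\mu}}\Delta_{\mathfrak{t}_\mu} = 0$, that is $\theta\Delta_{\mathfrak{t}_\mu} = \tfrac{\delta_{\mathfrak{t}_\mu}(\mu-2)}{4\mu}E_{\mathfrak{t}_\mu,2}\Delta_{\mathfrak{t}_\mu}$; this is the weight-$\delta_{\mathfrak{t}_\mu}$ instance of closure of the Ramanujan--Serre operator, consistent with Corollary \ref{generalizedRamanjanSystem}. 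Setting $s = w(r+1)/\delta_{\mathfrak{t}_\mu}$ and differentiating a power, $\theta(\Delta_{\mathfrak{t}_\mu}^{s}) = s\,\tfrac{\theta\Delta_{\mathfrak{t}_\mu}}{\Delta_{\mathfrak{t}_\mu}}\Delta_{\mathfrak{t}_\mu}^{s} = \tfrac{(\mu-2)(r+1)w}{4\mu}E_{\mathfrak{t}_\mu,2}\Delta_{\mathfrak{t}_\mu}^{s}$, which is the same first-order equation. A first-order linear $\theta$-equation has a one-dimensional solution space over the constants, so $\mathcal{W}_{\vec{Y}} = C\,\Delta_{\mathfrak{t}_\mu}^{s}$; and $C \neq 0$ because the $y_i$ form a fundamental system, so their ordinary (hence $\theta$-) Wronskian is not identically zero on $\mathbb{H}$.

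The step I expect to be the main obstacle is the bookkeeping in the noncommutative expansion of $\partial^{r+1}_{\mathfrak{t}_\mu,w-r}$: one must check carefully that the terms where $\theta$ hits $E_{\mathfrak{t}_\mu,2}$ really are confined below order $\theta^{r}$, so that the $\theta^{r}$-coefficient collects additively to the clean multiple of $E_{\mathfrak{t}_\mu,2}$ above, together with pinning down the identity $\partial_{\mathfrak{t}_\mu,\delta_{\mathfrak{t}_\mu}}\Delta_{\mathfrak{t}_\mu} = 0$ in the present normalization. An alternative, constant-free route argues divisor-theoretically: $\mathcal{W}_{\vec{Y}}$ is automorphic of weight $w(r+1)$ and, being proportional to the ordinary Wronskian of a fundamental system of a regular ODE, is nonvanishing on $\mathbb{H}$; since $\Delta_{\mathfrak{t}_\mu}$ is likewise nonvanishing on $\mathbb{H}$ and vanishes only at the cusp, the valence formula forces the two cusp orders to agree, whence $\mathcal{W}_{\vec{Y}}/\Delta_{\mathfrak{t}_\mu}^{s}$ is a holomorphic, nonvanishing weight-$0$ automorphic function and therefore constant. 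I would keep the Abel--Liouville argument as the primary line and use the divisor argument as a check on the exponent $s$ and on $C \neq 0$.
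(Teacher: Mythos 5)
Your proposal is correct, but it takes a genuinely different route from the paper. The paper's proof is essentially a citation: it defers to Proposition 3.4 of \cite{grabner2020} and only adjusts two details, namely replacing $12$ by $\delta_{\mathfrak{t}_\mu}$ and checking that the multiplier $\rho(T_{\mathfrak{t}_\mu})$ arising from the ansatz \eqref{ansatzVector} under the shifted translation $z \mapsto z + \varpi$ still leaves the determinant unchanged. The underlying argument being imported is the transformation-theoretic one: $\mathcal{W}_{\vec{Y}}$ is an automorphic form of weight $w(r+1)$ (possibly with a character), non-vanishing on $\mathbb{H}$ because the $y_i$ form a fundamental system, and holomorphic at the cusp, so the valence formula identifies it with the stated power of $\Delta_{\mathfrak{t}_\mu}$ --- this is exactly the ``alternative, constant-free route'' you sketch at the end. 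Your primary line is instead the Abel--Liouville argument: after using Proposition \ref{WronskiSimilarityInvariant} to pass to the $\theta$-Wronskian, you compute the $\theta^{r}$-coefficient of $\mathsf{H}_{\mathbf{B}}$ (the telescoping sum $\sum_{k=0}^{r}(w-r+2k)=(r+1)w$ is right, and your observation that terms where $\theta$ hits $E_{\mathfrak{t}_\mu,2}$ drop below order $\theta^{r}$ is the correct justification) and match the resulting first-order equation against $\theta(\Delta_{\mathfrak{t}_\mu}^{s})$. This buys a self-contained, purely local proof on $\mathbb{H}$ that does not need automorphy of $\mathcal{W}_{\vec{Y}}$ at all, at the cost of one input the paper never isolates: the Hecke analogue $\partial_{\mathfrak{t}_\mu,\delta_{\mathfrak{t}_\mu}}\Delta_{\mathfrak{t}_\mu}=0$ of Ramanujan's identity, which you correctly flag as needing to be pinned down (it follows from $\partial_{\mathfrak{t}_\mu,\delta_{\mathfrak{t}_\mu}}\Delta_{\mathfrak{t}_\mu}/\Delta_{\mathfrak{t}_\mu}$ being a holomorphic weight-$2$ form on $\mathfrak{H}(\varpi_\mu)$, hence zero, but that is not proved in this paper either). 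Keeping both arguments, as you propose, is a sensible cross-check on the exponent and on $C\neq 0$.
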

    \begin{proof}
        This proof is identical to that of the proof of Proposition 3.4 in \cite{grabner2020}, except for two small matters.
        
        Firstly, we must substitute  $ \delta_{\mathfrak{t}_\mu} = 2 \lcm(2, \mu) $ where there would be a $ 12 $ in the source. This causes no trouble. 

        Secondly, under $ T_{\mathfrak{t}_\mu}z $ we must instead consider the ansatz vector
            \begin{eqnarray*}
                \vec{Y}_{\mathsf{H}}(T_{\mathfrak{t}_\mu}z) = (C_\mathsf{H}^\vee \odot V_{\mathsf{H}}^\vee (T_{\mathfrak{t}_\mu}z) ) S_\mathsf{H}(T_{\mathfrak{t}_\mu}z).
            \end{eqnarray*}
        from \eqref{ansatzVector}. Since $ q := e^{2 \pi i z/\varpi} $, we have 
            \begin{eqnarray*}
                 S_\mathsf{H}(T_{\mathfrak{t}_\mu}z) = S_\mathsf{H}(z)
            \end{eqnarray*}
        holds. But $ V_{\mathsf{H}}^\vee (T_{\mathfrak{t}_\mu}z) = V^\vee_\mathsf{H}(z+\varpi) $ where possibly $ \varpi \neq 1$; however, this does not change the determinant of $ \vec{Y}_{\mathsf{H}}(z) $ by properties of the determinant. In other words,
            \begin{eqnarray*}
                \mathcal{W}_{\vec{Y}}(T_{\mathfrak{t}_\mu}z ) = \rho(T_{\mathfrak{t}_\mu})\mathcal{W}_{\vec{Y}}(z) ,
            \end{eqnarray*}
        where $ \rho(T_{\mathfrak{t}_\mu}) $ is analogous to that of the proof found in \cite{grabner2020}. All else being equal, the proof is complete. 
    \end{proof}

    \begin{exa}
        There is a famous identity of Garvan, developed further in \cite{milne2001}, that says,
            \begin{eqnarray*}
                \det
                    \begin{pmatrix}
                        E_{\mathfrak{t}_3, 4} & E_{\mathfrak{t}_3, 6} & E_{\mathfrak{t}_3, 8} \\
                        E_{\mathfrak{t}_3, 6} & E_{\mathfrak{t}_3, 8} & E_{\mathfrak{t}_3, 10} \\
                        E_{\mathfrak{t}_3, 8} & E_{\mathfrak{t}_3, 10} & E_{\mathfrak{t}_3, 12}\\
                    \end{pmatrix}
                = -\frac{250\,(1728)^2}{691 (2\pi)^{24}}\Delta_{\mathfrak{t}_3}^2
            \end{eqnarray*}
        holds. By Theorem \ref{thm:fundSolutions} it would appear that the Hankel matrix property (Definition \ref{def:toeplitz}) is not fundamental to such identities.
    \end{exa}

In Theorem \ref{thm:fundSolutions}, we have shown that the Wronskian of fundamental solutions is a predictable power of the discriminant function times some scalar. We next prove a Proposition that will allow us to relate an arbitrary Hecke vector-form to the fundamental solutions of equation \eqref{HeckeAutomorphicDiffEq}.

    \begin{prop}\label{vectorFormSolutionVector}
        Let $ U_{\mathfrak{t}_\mu, w, r} \in \mathcal{QM}^r_w(\mathfrak{H}(\varpi_\mu)) $ and suppose $ U_{\mathfrak{t}_\mu, w, r} $ is a solution to some $ \mathsf{H}_\mathbf{B}[y] $ as above. Then $ \vec{F}_U(z,1) $ is a fundamental solution vector of $ \mathsf{H}_\mathbf{B}[y] $. 
    \end{prop}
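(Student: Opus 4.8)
The plan is to establish two facts about the $r+1$ components $f_{U,0},\dots,f_{U,r}$ of $\vec F_U(z,1)$: that each is a solution of $\mathsf H_{\mathbf B}[y]=0$, and that together they are linearly independent. Since $\mathsf H_{\mathbf B}$ has order $r+1$, its local solution space $W$ has dimension $r+1$, so any $r+1$ linearly independent solutions automatically constitute a fundamental system. The point of departure is that $f_{U,0}=g_{U,0}=U_{\mathfrak t_\mu,w,r}$, which is a solution by hypothesis (and is $\not\equiv 0$); the work is to propagate this to the remaining components and to verify independence.

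For membership in $W$, the structural input is that $\mathsf H_{\mathbf B}$ is an \emph{automorphic} operator: its coefficients $B_{\mathfrak t_\mu,2j}$ lie in $\mathcal M_{2j}(\mathfrak H(\varpi_\mu))$, and the Ramanujan--Serre operator $\partial_{\mathfrak t_\mu,w-r}$ is weight-equivariant, intertwining the weight-$(w-r)$ and weight-$(w-r+2)$ slash actions of the generators. Consequently $W$ is stable under the two operations $y(z)\mapsto y(T_{\mathfrak t_\mu}z)$ and $y(z)\mapsto z^{-(w-r)}y(S_{\mathfrak t_\mu}z)$: the first because $q=e^{2\pi i z/\varpi_\mu}$ and all coefficients are $T_{\mathfrak t_\mu}$-periodic, so $\theta$ and the $B_{\mathfrak t_\mu,2j}$ commute with translation; the second because the quasi-transformation of $E_{\mathfrak t_\mu,2}$ built into $\partial_{\mathfrak t_\mu,w-r}$ exactly cancels the non-equivariance of $\theta$ under $S_{\mathfrak t_\mu}$. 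This equivariance of the Serre operator is the one genuinely analytic ingredient, and proving it cleanly (rather than merely citing it) is where I expect the main difficulty to lie.

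I would then generate every component from $U$ inside $W$. Applying the $S$-slash to $f_{U,0}=U$ gives, by Proposition~\ref{quasiUnderS}, exactly $z^{-(w-r)}U(S_{\mathfrak t_\mu}z)=f_{U,r}$, so $f_{U,r}\in W$. A direct computation from Definition~\ref{def:components}, using that each $g_{U,\ell}$ is $T_{\mathfrak t_\mu}$-invariant (it is a polynomial in the automorphic $h_{U,\ell+m}$ and in the $T$-periodic $E_{\mathfrak t_\mu,2}$), yields the umbral relation $f_{U,k}(T_{\mathfrak t_\mu}^a z)=\sum_{j=0}^{k}\binom{k}{j}(a\varpi)^{k-j}f_{U,j}(z)$ recorded in Remark~\ref{rem:binomialForm} (equivalently Proposition~\ref{prop:fWithWeights}). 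Taking $k=r$ and $a=0,1,\dots,r$, each $f_{U,r}(T_{\mathfrak t_\mu}^a z)$ lies in $W$ and equals $\sum_{j=0}^{r}\binom{r}{j}(a\varpi)^{r-j}f_{U,j}(z)$. The coefficient matrix $\big(\binom{r}{j}(a\varpi)^{r-j}\big)_{a,j}$ is a Vandermonde matrix in the distinct nodes $0,\varpi,\dots,r\varpi$ up to the invertible diagonal scaling by the $\binom{r}{j}$, hence invertible; inverting it exhibits each $f_{U,j}$ as a $\mathbb C$-linear combination of solutions, so $f_{U,j}\in W$ for every $j$.

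Finally, for independence I would use that, as a polynomial in $z$ with $T_{\mathfrak t_\mu}$-periodic ($q$-series) coefficients, $f_{U,k}(z)=\sum_{\ell=0}^{k}\binom{k}{\ell}g_{U,\ell}(z)\,z^{k-\ell}$ has $z$-degree $k$ with leading coefficient $g_{U,0}=U\not\equiv0$. A relation $\sum_k c_k f_{U,k}\equiv0$ with $c_k\in\mathbb C$ forces, on comparing top $z$-degrees (the functions $1,z,\dots,z^r$ being linearly independent over the field of $\varpi_\mu$-periodic meromorphic functions), $c_r U\equiv0$, whence $c_r=0$, and then $c_{r-1}=0,\dots,c_0=0$ by descent. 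Thus the $r+1$ components are linearly independent solutions of an order-$(r+1)$ equation and form a fundamental solution vector; this is consistent with the Frobenius ansatz \eqref{ansatzVector}, whose shape $(C^\vee_{\mathsf H}\odot V^\vee_{\mathsf H}(z))S_{\mathsf H}(z)$ is identical to that of $\vec F_U$ in Notation~\ref{not:specialHeckeVectorForm}, corroborating the identification. As noted, the main obstacle is the careful verification of $S_{\mathfrak t_\mu}$-equivariance of $\partial_{\mathfrak t_\mu,w-r}$; the remaining steps are elementary linear algebra and binomial bookkeeping.
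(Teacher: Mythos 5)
Your proposal is correct and follows essentially the same route as the paper: you identify $f_{U,r}$ with the $S_{\mathfrak{t}_\mu}$-image of the solution $U$, use that the $T^a_{\mathfrak{t}_\mu}$-translates of a solution are again solutions, and invert the scaled Vandermonde relation (Proposition~\ref{prop:fWithWeights}) to place every component $f_{U,j}$ in the solution space. The only difference is cosmetic: you verify linear independence directly via the $z$-degree filtration with leading coefficient $U\not\equiv 0$, whereas the paper appeals to the invariance of the Wronskian under the linear column operations effected by $\mathcal{V}_U(\overline{r})$ and $\mathbf{b}_U$.
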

    \begin{proof}
        Recall that if $ y $ is a solution of a Hecke linear automorphic differential equation, then also $ y(S_{\mathfrak{t}_\mu}z) $ is a solution. We prove the desired claim as a corollary of an identity shown in \cite{grabner2020}. First, choose $ f_{U, r}(z) $ as it is given in Definition \ref{def:components}; clearly, $ r $ needs to be the depth of the solution $ y $ and we are working over $ \mathfrak{H}(1) $ or the full modular group. Then in \cite{grabner2020}, they show the identity
            \begin{eqnarray*}
                \begin{pmatrix}
                    f_{U, r}(z) \\
                    f_{U,r}(T_{\mathfrak{t}_3}z) \\
                    \vdots \\
                    f_{U,r} (T^r_{\mathfrak{t}_3}z)
                \end{pmatrix} = 
                \mathcal{V}_U(\overline{r}) \left\{ 
                    \mathbf{b}_U \odot 
                    \begin{pmatrix}
                        f_{U,0}(z) \\
                        f_{U,1}(z) \\
                        \vdots \\
                        f_{U,r}(z)
                    \end{pmatrix}\right\}
            \end{eqnarray*}
        holds if 
            \begin{eqnarray*}
                \mathcal{V}_U(\overline{r}) := \mathcal{V}_U(0,1,\cdots, r)
            \end{eqnarray*}
        (recall that $\mathcal{V}_U $ is the Vandermonde matrix of Definition \ref{def:vandermondeMatrix}). By Proposition \ref{prop:fWithWeights}, the left side is equivalent to relations
            \begin{eqnarray*}
                \begin{pmatrix}
                    f_{U, r}(z) \\
                    f_{U, r}(T_{\mathfrak{t}_3}z) \\
                    \vdots \\
                    f_{U, r}(T^r_{\mathfrak{t}_3}z)
                \end{pmatrix} &=& 
                \mathcal{V}_U(\overline{r}) \left\{ 
                    \mathbf{b}_U \odot \left[
                    (p^{\vee}_U\odot V^{\vee}_U(z))(  G_{U}(z) \odot B_U(1) ) \right] \right \} \\
                    &=& \mathcal{V}_U(\overline{r}) \left\{ 
                    \mathbf{b}_U \odot \vec{F}_U(z,1) \right\}\\
                    &=& \mathcal{V}_U(\overline{r}) \left\{ 
                    \mathbf{b}_U \odot \vec{F}_U(z) \right\}.
            \end{eqnarray*}
        
        We can move to quasiautomorphic forms over $\mathfrak{H}(\varpi_\mu) $, after changing what needs to be changed in $ U_{\mathfrak{t}_\mu} $, by seeing that 
            \begin{eqnarray*}
                \begin{pmatrix}
                    f_{U, r}(z) \\
                    f_{U, r}(T_{\mathfrak{t}_\mu}z) \\
                    \vdots \\
                    f_{U, r}(T^r_{\mathfrak{t}_\mu}z)
                \end{pmatrix} &=& 
                    \mathcal{V}_U(\overline{r}) \left\{ 
                    \mathbf{b}_U \odot 
                    \left[(p^{\vee}_U\odot V^{\vee}_U(z))(G_{U}(z) \odot B_U(\varpi) ) \right] \right \} \\ &=& 
                    \mathcal{V}_U(\overline{r}) \left\{ 
                    \mathbf{b}_U \odot 
                    \vec{F}_U(z,\varpi)  \right\}
            \end{eqnarray*}
        holds. The operation $ B_U(\varpi) \odot G_{U}(z) $ is linear in that we multiply each column element of $ G_{U}(z) $ by a scalar. By invariance of the Wronski under linear changes in matrix columns we conclude that $ \vec{F}_U(z,1) = \vec{F}_U(z) $ is a fundamental solution vector of $\mathsf{H}_{\mathbf{B}}$.
    \end{proof}

\printbibliography
    
\end{document}